\title{On a bound of $p$-ranks of Iwasawa modules of $\Z_p$-extensions over a quartic CM-field}
\author{Takuya Yanagisawa}
\date{}
\theoremstyle{plain}
\newtheorem{df}{Definition}[section]
\newtheorem{thm}[df]{Theorem}
\newtheorem{prop}[df]{Proposition}
\newtheorem{cor}[df]{Corollary}
\newtheorem{lem}[df]{Lemma}
\theoremstyle{definition}
\newtheorem{rem}[df]{Remark}
\newcommand{\Z}{\mathbb{Z}}
\newcommand{\Q}{\mathbb{Q}}
\newcommand{\F}{\mathbb{F}}
\newcommand{\pe}{\mathfrak{p}}
\newcommand{\Pe}{\mathfrak{P}}
\newcommand{\ve}{\varepsilon}
\DeclareMathOperator{\Ker}{Ker}
\DeclareMathOperator{\Gal}{Gal}
\DeclareMathOperator{\Ann}{Ann}
\DeclareMathOperator{\rank}{rank}
\DeclareMathOperator{\Homo}{H}
\newcommand{\ab}{\mathrm{ab}}
\newcommand{\cy}{\mathrm{c}}
\newcommand{\an}{\mathrm{a}}
\newcommand{\tgen}[1]{\overline{\langle #1 \rangle}}
\newcommand{\br}[1]{\lbrack #1 \rbrack}
\newcommand{\bbr}[1]{\llbracket #1 \rrbracket}
\begin{document}
\maketitle
\begin{abstract}
Let $p$ be a prime number. If a number field $k$ has at least one complex place, there are infinitely many $\Z_p$-extensions over $k$, and some authors studied the behavior of Iwasawa invariants of these $\Z_p$-extensions.  In particular, Fujii studied the case where $k$ is an imaginary quadratic field and obtained some results on the boundedness of Iwasawa $\lambda$-invariants in a certain infinite family of $\Z_p$-extensions. In the present article, we give analogous theorems in the case where $k$ is a quartic CM-field. One of our main theorems determines all the Iwasawa invariants, including the $\nu$-invariants, of a certain infinite family of $\Z_p$-extensions over a quartic CM-field. 
\end{abstract}
\section{Introduction}\label{Section Introduction}
Let $k/\Q$ be a finite extension and $p$ a prime number. If $k$ has at least one complex place, there are infinitely many $\Z_p$-extensions over $k$. In this case, we are interested in the behavior of Iwasawa modules $X(k_\infty)$ as $k_\infty/k$ varies all $\Z_p$-extensions over $k$, where $X(k_\infty)$ is defined to be the Galois group of the maximal unramified abelian pro-$p$-extension $L(k_\infty)/k_\infty$. In particular, we are interested in the behavior of the Iwasawa invariants of these $\Z_p$-extensions. 

The Iwasawa $\lambda$- and $\mu$-invariants of $k_\infty/k$ are defined as structural invariants of the Iwasawa module $X(k_\infty)$ and are widely studied in classical Iwasawa theory, which we will recall in \S\ref{Subsection Fundamental notions and properties of Iwasawa modules}. The behavior of these invariants has been studied by several authors, in particular, in the case where $k$ is an imaginary quadratic field. For example, Sands \cite{Sands1991} studied the case where the $\lambda$-invariant of the cyclotomic $\Z_p$-extension $k^\cy_\infty/k$ is small, and Ozaki \cite{Ozaki2001} studied the relation with a conjecture known as Greenberg's Generalized Conjecture. 

In \cite{Fujii2013}, Fujii also studied the case where $k$ is an imaginary quadratic field and obtained the following. 
\begin{thm}[{\cite[Theorem~1]{Fujii2013}}]\label{FujiiThm1}
Let $k$ be an imaginary quadratic field and $p$ an odd prime number that splits in $k$. Suppose that $\lambda(k^\cy_\infty/k)=2$, where $k^\cy_\infty/k$ is the cyclotomic $\Z_p$-extension. Then $\lambda(k_\infty/k)\leq 2$ for each $\Z_p$-extension $k_\infty/k$ that satisfies $k_\infty\cap k^\an_\infty=k$ and is ramified at each $p$-adic prime, where $k^\an_\infty/k$ is the anti-cyclotomic $\Z_p$-extension (see \S\ref{Subsection Anti-cyclotomic extensions} for the definition). 
\end{thm}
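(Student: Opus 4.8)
The plan is to pass to the unique $\Z_p^2$-extension $\tilde{k}/k$ (here $r_2(k)=1$, so that $\Gal(\tilde{k}/k)\cong\Z_p^2$) and to treat all of the relevant $\Z_p$-extensions simultaneously through a single two-variable Iwasawa module. Write $\Gamma=\Gal(\tilde{k}/k)$; since $p$ is odd, complex conjugation $J$ splits $\Gamma=\Gamma^\cy\times\Gamma^\an$, where $J$ acts trivially on $\Gamma^\cy=\Gal(k^\cy_\infty/k)$ and by $-1$ on $\Gamma^\an=\Gal(k^\an_\infty/k)$. Choosing topological generators $\gamma^\an,\gamma^\cy$ and setting $S=\gamma^\an-1$, $T=\gamma^\cy-1$ identifies $\Lambda:=\Z_p\bbr{\Gamma}$ with $\Z_p\bbr{S,T}$. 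I would attach to $\tilde{k}$ the two-variable Iwasawa module $\tilde{X}$ controlling the unramified modules $X(k_\infty)$, which is a finitely generated torsion $\Lambda$-module, and let $F=F(S,T)$ generate its characteristic ideal. The cyclotomic specialization corresponds to killing $\Gamma^\an$, i.e.\ to $S\mapsto0$, so $F(0,T)$ governs $\lambda(k^\cy_\infty/k)$.

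The first technical step is a control theorem identifying, for each $\Z_p$-extension $k_\infty/k$ in the family, the invariant $\lambda(k_\infty/k)$ with the Iwasawa $\lambda$-invariant of the specialization of $F$ along the corresponding line, up to a correction term supported at the primes above $p$. Concretely, the hypothesis $k_\infty\cap k^\an_\infty=k$ says that $H:=\Gal(\tilde{k}/k_\infty)$ is transverse to $\Gamma^\cy$, so $H=\tgen{\gamma^\an(\gamma^\cy)^s}$ for some $s\in\Z_p$ (with $s=0$ giving $k^\cy_\infty$), and the descent $\tilde{X}\rightsquigarrow X(k_\infty)$ is governed by the substitution $S\mapsto(1+T)^{-s}-1=:h_s(T)\in T\Z_p\bbr{T}$. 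Because $\tilde{k}/k_\infty$ is ramified exactly at the primes above $p$, the kernel and cokernel of this descent map are controlled by the inertia subgroups of $\pe$ and $\bar{\pe}$; it is precisely here that the hypothesis ``ramified at each $p$-adic prime'' enters, forcing both inertia groups to be nontrivial so that the correction has bounded $\Z_p$-rank independent of $s$. I expect this control step, namely making the correction uniform in $s$ and verifying that it does not raise $\lambda$, to be the \emph{main obstacle}, and the place where the splitting of $p$ is used most delicately.

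Granting the control step, the bound reduces to a clean statement about $F$. By Ferrero--Washington, $\mu(k^\cy_\infty/k)=0$ since $k$ is abelian over $\Q$; hence $F(0,T)\not\equiv0\pmod p$, and $\lambda(k^\cy_\infty/k)=2$ forces the image of $F$ in $\F_p\bbr{T}$ to be $T^2$ times a unit. Thus $F$ is $T$-distinguished of degree $2$ over the complete local ring $\Z_p\bbr{S}$, and Weierstrass preparation gives $F=u\cdot P$ with $u\in\Z_p\bbr{S,T}^\times$ and $P(S,T)=T^2+b_1(S)T+b_0(S)$, where $b_0(0),b_1(0)\in p\Z_p$. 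For a member $k_\infty=k_\infty^{(s)}$ of the family I then specialize $S\mapsto h_s(T)$, obtaining
\[
P(h_s(T),T)=T^2+b_1(h_s(T))\,T+b_0(h_s(T)).
\]
Because $h_s(T)\in T\Z_p\bbr{T}$ and $b_0(0),b_1(0)\in p\Z_p$, both $b_0(h_s(T))$ and $b_1(h_s(T))$ lie in $p\Z_p+T\Z_p\bbr{T}$; hence the right-hand side is congruent modulo $p$ to a nonzero power series of $T$-order at most $2$. Its $\mu$-invariant therefore vanishes and its Weierstrass degree is at most $2$, which yields $\lambda(k_\infty^{(s)}/k)\le2$ once the (now understood) correction term is subtracted. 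Finally, I would note that the argument degenerates correctly at the excluded direction: for $k^\an_\infty$ one would instead substitute into the very variable in which $P$ is distinguished, and no bound results, consistent with the necessity of the hypothesis $k_\infty\cap k^\an_\infty=k$.
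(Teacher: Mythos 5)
Your overall framework---descending from $\widetilde{k}$, identifying $\Z_p\bbr{\Gal(\widetilde{k}/k)}$ with $\Z_p\bbr{S,T}$, and specializing along the lines $S\mapsto h_s(T)=(1+T)^{-s}-1$---is the same as the paper's, but your decisive final step is false as stated: it is \emph{not} true that $P(h_s(T),T)$ is congruent modulo $p$ to a power series of $T$-order at most $2$ for every $s$. Take $b_1(S)=S$ and $b_0(S)=0$ (both allowed under your constraints, since $b_i(0)=0\in p\Z_p$), so $P=T^2+ST$ and $P\equiv T^2\pmod{(p,S)}$; for $s=1$ one has $h_1(T)=-T/(1+T)$ and
\[
P(h_1(T),T)=T^2-\frac{T^2}{1+T}=\frac{T^3}{1+T},
\]
whose reduction mod $p$ has $T$-order $3$. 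The flaw in your general computation is that knowing only $b_i(h_s(T))\in p\Z_p+T\Z_p\bbr{T}$, the coefficient of $T^2$ in $T^2+b_1(h_s(T))T+b_0(h_s(T))$ modulo $p$ is $1-\gamma_1\bar{s}+(\text{contributions from }b_0)$, where $\gamma_1$ is the linear coefficient of $b_1$ mod $p$ and $\bar{s}=s\bmod p$; this can vanish in $\F_p$ simultaneously with the $T$-coefficient, so no bound on the order follows. This is exactly the critical cancellation handled by the dichotomy in the paper's Lemma~\ref{powerseries dimension} (the case $\mu(f)=0$, $\lambda(f)=1$, $v_p(\alpha)=0$, $\alpha'U(0)\equiv1\pmod{p}$), and it is why this method can only prove that the bound holds \emph{for at least one of $s$ and $-s$}: in my example, $s=-1$ gives $P(h_{-1}(T),T)=2T^2$, of order $2$ precisely because $p$ is odd (note that your argument never uses $p$ odd---a warning sign). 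The theorem is then rescued because $k_\infty^{(-s)}=k'_\infty$ is the complex conjugate of $k_\infty^{(s)}$ and $X(k_\infty)\simeq X(k'_\infty)$ as $\Z_p$-modules (Remark~\ref{isomorphic}), so a bound for either conjugate yields the stated bound for both. Your proposal contains neither the dichotomy nor the conjugation step, and without them it asserts something false.

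Secondarily, the characteristic-ideal-plus-control route is heavier than needed and leaves the correction terms genuinely unresolved: ``once the correction term is subtracted'' is not an argument, since specializing a characteristic ideal controls the descended module only up to pseudo-null and ramification errors, and you need $\leq 2$ on the nose for \emph{every} $s$, not for all but finitely many lines. The paper avoids characteristic power series entirely: by Lemmas~\ref{ur} and \ref{fiveterm} there is an exact sequence $0\to X(\widetilde{k})_{\Gal(\widetilde{k}/k^\cy_\infty)}\to X(k^\cy_\infty)\to\Gal(\widetilde{k}/k^\cy_\infty)\to 0$, so the hypothesis forces $X(\widetilde{k})_{\Gal(\widetilde{k}/k^\cy_\infty)}\simeq\Z_p$, whence $X(\widetilde{k})$ is \emph{cyclic} over $\Z_p\bbr{\Gal(\widetilde{k}/k^\cy_\infty)}$ by Nakayama's Lemma; cyclicity produces an exact annihilator of the form $S-f(T)$ and the clean inequality $\rank_pX(k_\infty)\leq\rank_pX(\widetilde{k})_{\Gal(\widetilde{k}/k_\infty)}+1$ of Lemma~\ref{prankineq}, with no error term (see Remark~\ref{deduceFujii} and Theorem~\ref{First Theorem}). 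The same exact sequence also shows your normalization is off for the natural choice $\widetilde{X}=X(\widetilde{k})$: its coinvariants along the cyclotomic direction have $\Z_p$-rank $1$, not $2$, one rank being absorbed by $\Gal(\widetilde{k}/k^\cy_\infty)$ since $\widetilde{k}\subseteq L(k^\cy_\infty)$ by Lemma~\ref{ur}; so the relevant specialization has Weierstrass degree $1$, and the same cancellation then occurs already in the linear term $1-\beta_1\bar{s}$---once more precisely the dichotomy case of Lemma~\ref{powerseries dimension}.
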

We remark that Fujii also dealt with the case where $p$ does not split in $k$ and obtained similar results. See \cite[Theorem~1 and Proposition~5.1]{Fujii2013} for details. 

In the present article, we improve Fujii's method and study the behavior of Iwasawa modules over a quartic CM-field, instead of an imaginary quadratic field. 

Let $k$ be a CM-field and $p$ an odd prime number that splits completely in $k$. For a set $\Sigma$ of $p$-adic primes of $k$, we define $\overline{\Sigma}\coloneqq\{\overline{\pe}\mid \pe\in\Sigma\}$, where $\overline{\pe}$ is the complex conjugate of $\pe$. For such a set $\Sigma$ and a $\Z_p$-extension $k_\infty/k$, we let $X_\Sigma(k_\infty)$ denote the $\Sigma$-ramified Iwasawa module of $k_\infty/k$, that is, the Galois group of the maximal abelian pro-$p$-extension $M_\Sigma(k_\infty)/k_\infty$ that is unramified outside $\Sigma$. 

The following is one of the main results. 
\begin{thm}\label{minimum rank 2}
Let $k$ be a quartic CM-field, $p$ an odd prime number that splits completely in $k$ and $\Sigma$ a set of $p$-adic primes of $k$ that satisfies $\Sigma\cap\overline{\Sigma}=\emptyset$. Suppose that $X(k^\cy_\infty)\simeq\Z_p^2$ as a $\Z_p\bbr{\Gal(k^\cy_\infty/k)}$-module. Then we have
\[
\min\{\rank_pX_\Sigma(k_\infty), \rank_pX_{\overline{\Sigma}}(k_\infty)\}\leq 2
\]
for each $\Z_p$-extension $k_\infty/k$ that satisfies $k_\infty\cap\widetilde{k}^\an=k$, where $\widetilde{k}^\an/k$ is the maximal multiple anti-cyclotomic $\Z_p$-extension, which we will introduce in \S\ref{Subsection Anti-cyclotomic extensions}
\end{thm}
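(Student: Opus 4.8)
The plan is to trade the single $\Z_p$-extension $k_\infty$ for the maximal multiple $\Z_p$-extension $\widetilde{k}/k$, to package the $\Sigma$- and $\overline{\Sigma}$-ramified data into modules over the multivariable Iwasawa algebra, and then to recover $X_\Sigma(k_\infty)$ and $X_{\overline{\Sigma}}(k_\infty)$ by a control (descent) argument whose only global input is the cyclotomic hypothesis $X(k^\cy_\infty)\simeq\Z_p^2$. First I would set $\mathcal{G}=\Gal(\widetilde{k}/k)$ and use the complex conjugation $J$, which acts on $\mathcal{G}$ because $\widetilde{k}/k^+$ is Galois; since $p$ is odd this is the eigenspace decomposition of $\mathcal{G}$ into the cyclotomic line $\Gal(k^\cy_\infty/k)$, coming from the real quadratic subfield $k^+$, and the anti-cyclotomic part $\Gal(\widetilde{k}^\an/k)$. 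In these terms the hypothesis $k_\infty\cap\widetilde{k}^\an=k$ says exactly that $\Gal(\widetilde{k}/k_\infty)$ and $\Gal(\widetilde{k}/\widetilde{k}^\an)$ generate $\mathcal{G}$, i.e. $k_\infty\widetilde{k}^\an=\widetilde{k}$ and $k_\infty$ carries the full cyclotomic direction; this is what will let the cyclotomic input propagate along $k_\infty$.

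Second, I would reduce to a genuinely finite statement and connect the unramified hypothesis to the ramified modules. Because $k$ is a CM-field, its global units are, up to a finite group and roots of unity, those of $k^+$, so they embed diagonally into the semi-local units at the primes above $p$. As $\Sigma\cap\overline{\Sigma}=\emptyset$, the $\Sigma$-components already detect the full rank of these units, which forces the inertia contribution $\Gal(M_\Sigma(k_\infty)/L(k_\infty))$ to be a torsion Iwasawa module; hence $X_\Sigma(k_\infty)$ is finitely generated over $\Z_p$ and $\rank_p X_\Sigma(k_\infty)$ is finite. The same reasoning relates $X_\Sigma(k^\cy_\infty)$ to the unramified $X(k^\cy_\infty)\simeq\Z_p^2$ up to a controlled discrepancy of rank at most two, and this is the step where the rank-two input first enters.

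Third comes the main comparison. I would form the multivariable module $\mathfrak{X}_\Sigma=X_\Sigma(\widetilde{k})$ over $\Z_p\bbr{\mathcal{G}}$ and prove an Iwasawa-type control theorem recovering $X_\Sigma(k_\infty)$ as the $\Gal(\widetilde{k}/k_\infty)$-coinvariants of $\mathfrak{X}_\Sigma$ up to finite error, and similarly for $\overline{\Sigma}$. Complex conjugation gives an isomorphism $X_\Sigma(k_\infty)\cong X_{\overline{\Sigma}}(Jk_\infty)$, so the two quantities in the theorem are the $\overline{\Sigma}$-ranks attached to the two transverse extensions $k_\infty$ and $Jk_\infty$, which share the same cyclotomic component but have opposite anti-cyclotomic components. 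The decomposition and inertia of the primes of $\Sigma$ in $k_\infty$ are in general not symmetric under $J$, and the bound of two is inherited by whichever of the two sides is correctly ramified, i.e. whichever side's coinvariants are governed by the cyclotomic $\Z_p^2$; this is precisely why one controls the minimum of the two $p$-ranks rather than both.

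The hard part will be the error analysis in the descent: controlling the contribution of the decomposition and inertia groups of the $\Sigma$-primes as $k_\infty$ ranges over all transverse $\Z_p$-extensions, and ruling out that these contribute extra $\Z_p$-free rank or large finite submodules that would push $\rank_p X_\Sigma(k_\infty)$ above two. Equivalently, the delicate point is to show that the asymmetry between $\Sigma$ and $\overline{\Sigma}$, equivalently between $k_\infty$ and $Jk_\infty$, always leaves at least one side whose coinvariants are pseudo-isomorphic to a quotient of the rank-two cyclotomic module; making this dichotomy precise, uniformly in $k_\infty$, is the crux of the argument.
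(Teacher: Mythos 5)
Your proposal assembles the right scaffolding but explicitly defers the one step that constitutes the proof: you write that ``making this dichotomy precise, uniformly in $k_\infty$, is the crux of the argument,'' and that crux is never supplied. Moreover, your framing of it is off. The dichotomy is not extracted from any asymmetry of decomposition or inertia of the $\Sigma$-primes in $k_\infty$ versus $Jk_\infty$; in the paper it is a purely algebraic fact about power series (Lemma~\ref{powerseries dimension}, packaged as Theorem~\ref{First Theorem}). One first uses Lemma~\ref{anti-cyclotomic intersection} to choose a single anti-cyclotomic line $k^\an_\infty$ with $k_\infty\subseteq K\coloneqq k^\cy_\infty k^\an_\infty$ and $k_\infty\cap k^\an_\infty=k$, writes $\Z_p\bbr{\Gal(K/k)}\simeq\Z_p\bbr{S,T}$, shows $X_\Sigma(K)$ is cyclic over $\Z_p\bbr{T}$ and annihilated by some $S-f(T)$, and then for $k_\infty=K^{\tgen{\sigma^{\alpha}\tau}}$ and $k'_\infty=K^{\tgen{\sigma^{-\alpha}\tau}}$ bounds the coinvariants by $\dim_{\F_p}\Z_p\bbr{S,T}/I_{\pm\alpha}$. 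A Weierstrass-preparation computation shows at least one of these dimensions is $\leq 1$: in the only delicate case ($\mu(f)=0$, $\lambda(f)=1$, $v_p(\alpha)=0$) the two relevant constants $\alpha'U(0)-1$ and $-\alpha'U(0)-1$ sum to $-2\not\equiv 0\bmod p$, so they cannot both vanish. This works for \emph{any} cyclic module and any $k_\infty$ transverse to the anti-cyclotomic part; no ramification analysis enters at this stage. Without this lemma (or a substitute) your argument does not close.

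There are two further structural problems. First, you descend from $\widetilde{k}$, which for a quartic CM-field is a $\Z_p^3$-extension; the descent inequality (Lemma~\ref{prankineq}) then costs $\rank_p\Gal(\widetilde{k}/k_\infty)=2$, yielding at best the bound $3$ --- this is exactly Theorem~\ref{minimum rank d1} with $d=2$, which, as the paper remarks, does \emph{not} imply Theorem~\ref{minimum rank 2}. Getting the bound $2$ requires working over the two-variable extension $K$, so that the descent costs only $1$. Second, your ``controlled discrepancy of rank at most two'' between the $\Sigma$-ramified and unramified modules over $k^\cy_\infty$ is too weak: the argument needs the exact equality $M_\Sigma(k^\cy_\infty)=L(k^\cy_\infty)$ (Lemma~\ref{M_Sigma L}, a reflection argument under $J$ using $X(k^\cy_\infty)^+=0$ and $\Sigma\cap\overline{\Sigma}=\emptyset$), since only then does the five-term sequence (Lemmas~\ref{ur} and~\ref{fiveterm}) give $X_\Sigma(K)_{\Gal(K/k^\cy_\infty)}\simeq\Z_p$, whence cyclicity of $X_\Sigma(K)$ by Nakayama --- the precise input to the dichotomy. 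Also, the ``control theorem up to finite error'' you propose is both stronger than needed and hard to prove; the one-sided inequality of Lemma~\ref{prankineq} suffices, and your unit-theoretic argument for torsionness of the inertia contribution plays no role in the actual proof. What you did get right is the conjugation identity $X_\Sigma(k'_\infty)\simeq X_{\overline{\Sigma}}(k_\infty)$ as $\Z_p$-modules (Remark~\ref{isomorphic}), which is indeed how the minimum over $\Sigma$ and $\overline{\Sigma}$ arises, and the overall descend-and-bound skeleton.
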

As a corollary to this theorem, we prove the following theorem. 
\begin{thm}\label{infinite Zp2}
Let $k$ be a quartic CM-field and $p$ an odd prime number that splits completely in $k$. Suppose that $X(k^\cy_\infty)\simeq\Z_p^2$ as a $\Z_p\bbr{\Gal(k^\cy_\infty/k)}$-module. Let $k_\infty/k$ be a $\Z_p$-extension that satisfies $k_\infty\cap\widetilde{k}^\an=k$ and is ramified at each $p$-adic prime. Then $X(k_\infty)\simeq\Z_p^2$ as a $\Z_p\bbr{\Gal(k_\infty/k)}$-module and, in particular, $\mu(k_\infty/k)=0$ and $\lambda(k_\infty/k)=2$. Moreover, if we let $e\geq 0$ be the minimum integer such that $k_\infty/k_e$ is totally ramified at each $p$-adic prime, then $v_p(h_{k_n})=2n+v_p(h_{k_e})-2e$ holds for all $n\geq e$ and, in particular, $\nu(k_\infty/k)=v_p(h_{k_e})-2e$. 
\end{thm}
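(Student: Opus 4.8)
The plan is to deduce everything from Theorem \ref{minimum rank 2} together with the extra hypothesis that $k_\infty/k$ is ramified at every $p$-adic prime. Since $p$ splits completely in the quartic CM-field $k$, there are four primes above $p$, forming two conjugate pairs; choose any $\Sigma$ with $\Sigma\cap\overline{\Sigma}=\emptyset$ (for instance one prime from each pair). The maximal unramified module $X(k_\infty)$ is a quotient of each of $X_\Sigma(k_\infty)$ and $X_{\overline{\Sigma}}(k_\infty)$, obtained by killing the inertia subgroups, so Theorem \ref{minimum rank 2} yields
\[
\rank_p X(k_\infty)\le\min\{\rank_p X_\Sigma(k_\infty),\ \rank_p X_{\overline{\Sigma}}(k_\infty)\}\le 2 .
\]
By topological Nakayama's lemma, $X(k_\infty)$ is generated by at most two elements as a $\Z_p$-module; in particular it is finitely generated over $\Z_p$, so $\mu(k_\infty/k)=0$ and $\lambda(k_\infty/k)=\rank_{\Z_p}X(k_\infty)\le 2$.

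Next I would establish the reverse inequality $\rank_{\Z_p}X(k_\infty)\ge 2$. This suffices to finish the structural claim: a $\Z_p$-module generated by at most two elements and of $\Z_p$-rank $2$ is a quotient of $\Z_p^2$ with torsion kernel, but a submodule of $\Z_p^2$ is torsion-free, so the kernel vanishes and $X(k_\infty)\simeq\Z_p^2$, giving $\lambda(k_\infty/k)=2$ and $\mu(k_\infty/k)=0$. For the lower bound I would exploit the CM-structure: writing $X(k_\infty)=X(k_\infty)^+\oplus X(k_\infty)^-$ for the eigenspaces of complex conjugation (legitimate since $p$ is odd), the hypotheses that $k_\infty$ ramifies at every prime above $p$ and that $k_\infty\cap\widetilde{k}^\an=k$ should force the $\Z_p$-rank of the minus part to be $2$, by comparison with the cyclotomic module $X(k^\cy_\infty)\simeq\Z_p^2$ through the rank computations underlying Theorem \ref{minimum rank 2}. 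I expect this lower bound to be the main obstacle, since it is precisely where the ramification hypothesis (stronger than $k_\infty\cap\widetilde{k}^\an=k$ alone) is needed, and where the assumption on the cyclotomic extension must be transported to a different $\Z_p$-extension.

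For the class-number formula I would reduce to the totally ramified case. Replacing the base by $k_e$, the extension $k_\infty/k_e$ is totally ramified at every prime above $p$ and unramified elsewhere, while $X(k_\infty)$ is unchanged. Writing $A_n$ for the $p$-part of the ideal class group of $k_n$, so that $|A_n|=p^{v_p(h_{k_n})}$, I would invoke the standard relation $A_n\simeq X(k_\infty)/(\nu_{n,e}X(k_\infty)+\mathcal{Y})$ for $n\ge e$, where $\nu_{n,e}=\omega_n/\omega_e$ with $\omega_m=(1+T)^{p^m}-1$ and $\mathcal{Y}$ records the inertia contributions and is independent of $n$. Since $X(k_\infty)\simeq\Z_p^2$ with $T$ acting topologically nilpotently, its characteristic polynomial is distinguished of degree $2$, and a direct computation gives $|X(k_\infty)/\nu_{n,e}X(k_\infty)|=p^{2(n-e)}$; as $\mathcal{Y}$ does not depend on $n$, its whole effect is absorbed into the value at $n=e$. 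This yields $|A_n|=p^{2(n-e)}|A_e|$, i.e. $v_p(h_{k_n})=2n+v_p(h_{k_e})-2e$ for all $n\ge e$, and reading this against Iwasawa's formula $v_p(h_{k_n})=\mu(k_\infty/k)p^n+\lambda(k_\infty/k)n+\nu(k_\infty/k)$ with $\mu=0$, $\lambda=2$ gives $\nu(k_\infty/k)=v_p(h_{k_e})-2e$. A secondary point to verify is that the formula is valid already from level $e$, not merely for $n\gg 0$: this is where the freeness of $X(k_\infty)$—equivalently, the absence of a nontrivial finite $\Lambda$-submodule—is used, ruling out any stabilization delay.
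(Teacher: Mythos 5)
Your first step (the upper bound $\rank_p X(k_\infty)\le 2$ via Theorem~\ref{minimum rank 2}, since $X(k_\infty)$ is a common quotient of $X_\Sigma(k_\infty)$ and $X_{\overline{\Sigma}}(k_\infty)$) matches the paper. But the lower bound $\rank_{\Z_p}X(k_\infty)\ge 2$, which you explicitly defer as ``the main obstacle'' with a sketched plus/minus-eigenspace comparison, is a genuine gap, and the sketch points in the wrong direction: the computations underlying Theorem~\ref{minimum rank 2} produce only \emph{upper} bounds on ranks, so comparing minus parts with $X(k^\cy_\infty)$ does not yield a lower bound for a non-cyclotomic $k_\infty$. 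The paper's mechanism is a one-liner you are missing: since $p$ splits completely and $k_\infty/k$ is ramified at each $p$-adic prime, Lemma~\ref{ur} shows $\widetilde{k}/k_\infty$ is unramified, and since Leopoldt's Conjecture holds for quartic CM-fields one has $\Gal(\widetilde{k}/k)\simeq\Z_p^3$, whence a surjection $X(k_\infty)\twoheadrightarrow\Gal(\widetilde{k}/k_\infty)\simeq\Z_p^2$ of $\Z_p\bbr{\Gal(k_\infty/k)}$-modules. This surjection also repairs a second defect of your argument: you only conclude $X(k_\infty)\simeq\Z_p^2$ as a $\Z_p$-module, whereas the theorem asserts an isomorphism of $\Z_p\bbr{\Gal(k_\infty/k)}$-modules, i.e.\ trivial Galois action (Remark~\ref{trivial action}); once $X(k_\infty)\simeq\Z_p^2$, the surjection onto $\Gal(\widetilde{k}/k_\infty)$ is an isomorphism and the target has trivial action because $\widetilde{k}/k$ is abelian. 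The trivial action is not cosmetic: you need it in the final step.

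Your class-number step has two concrete errors. First, the standard relation when several primes ramify is $A_n\simeq X/\nu_{n,e}Y_e$ for a fixed finite-index submodule $Y_e$ (containing $\omega_e X$ and the inertia elements; see \cite[Lemma~13.18]{Washington1997}), \emph{not} $A_n\simeq X/(\nu_{n,e}X+\mathcal{Y})$ with an $n$-independent $\mathcal{Y}$; in your form the correction term $\lvert\mathcal{Y}/(\mathcal{Y}\cap\nu_{n,e}X)\rvert$ grows with $n$ and is not ``absorbed into the value at $n=e$''. Second, the claim that a ``direct computation'' gives $\lvert X/\nu_{n,e}X\rvert=p^{2(n-e)}$ for an arbitrary distinguished characteristic polynomial of degree $2$ is unjustified, and for $p=3$ (where $\lambda=2=p-1$) it is exactly the delicate point: this is why Theorem~\ref{SandsThm3.1} carries the hypothesis that $\lambda<p-1$ or $p^2$ divides the constant term, and why the paper verifies that hypothesis by noting that the surjection $X(k_\infty)\to\Gal(\widetilde{k}/k_\infty)$ has infinite image with trivial action, so the characteristic power series has constant term $0$. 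With trivial action your computation would indeed become direct ($\nu_{n,e}$ acts as the scalar $\nu_{n,e}(0)=p^{n-e}$, so $\lvert X/\nu_{n,e}Y_e\rvert=p^{2(n-e)}\lvert A_e\rvert$), but you have neither established trivial action nor restricted to it. The paper avoids all of this bookkeeping by citing Sands' theorem for the extension $k_\infty/k_e$, whose hypotheses ($\mu=0$, no nontrivial finite submodules, and $p^2$ dividing the constant term) follow from $X(k_\infty)\simeq\Z_p^2$ together with the surjection from Lemma~\ref{ur}; your closing observation that freeness rules out a stabilization delay is correct in spirit, but by itself does not substitute for these verifications.
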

In \S\ref{Subsection Examples}, we also give a criterion to find pairs $(k, p)$ that satisfy the assumptions in the above theorems (see Lemma~\ref{condition}). We give numerical examples by using this criterion. 

We also give another result on CM-fields of arbitrary degree (Theorem~\ref{infinite Zpd}), but we have no numerical applications yet, except for imaginary quadratic fields. 

The structure of this paper is as follows. In \S\ref{Section Preliminaries}, we introduce some notions and properties of Iwasawa modules that will frequently used throughout this paper. In \S\ref{Section Strategy}, we present several key lemmas to the proof of the main theorems. In \S\ref{Section Iwasawa modules over a quartic CM-field}, we deal with quartic CM-fields and prove the main theorems stated above. In \S\ref{Section Iwasawa modules over a CM-field}, we show some results that deal with CM-fields of arbitrary degree. 
\section{Preliminaries}\label{Section Preliminaries}
\subsection{Fundamental notions and properties of Iwasawa modules}\label{Subsection Fundamental notions and properties of Iwasawa modules}
Fix an algebraic closure $\overline{\Q}$ of the field $\Q$ of rational numbers and we consider all algebraic extensions of $\Q$ to be in $\overline{\Q}$. 

Fix a prime number $p$. For any algebraic extension $k/\Q$, let $L(k)/k$ be the maximal unramified abelian pro-$p$-extension and $X(k)$ the Galois group $\Gal(L(k)/k)$. Let $K/k$ be a $\Z_p^d$-extension for some $d\geq 1$. Then $L(K)/k$ is a Galois extension by the maximality of $L(K)$, and $\Gal(K/k)$ acts on $X(K)$ as inner automorphism. Moreover, $X(K)$ naturally has a module structure over the completed group algebra $\Z_p\bbr{\Gal(K/k)}$. Once we fix a $\Z_p$-basis $\tau_1, \ldots, \tau_d$ of $\Gal(K/k)$, there is an isomorphism $\Z_p\bbr{\Gal(K/k)}\simeq\Z_p\bbr{T_1, \ldots, T_d}$ defined by $\tau_1\leftrightarrow 1+T_1, \ldots, \tau_d\leftrightarrow 1+T_d$, where $\Z_p\bbr{T_1, \ldots, T_d}$ is the ring of formal power series in $d$ variables. 

Let $k/\Q$ be a finite extension and $k_\infty/k$ a $\Z_p$-extension. For each non-negative integer $n$, there is a unique intermediate field $k_n$ of $k_\infty/k$ such that $[k_n:k]=p^n$. In this situation, Iwasawa's class number formula states that there are non-negative integers $\lambda(k_\infty/k)$, $\mu(k_\infty/k)$ and an integer $\nu(k_\infty/k)$ such that the $p$-order $v_p(h_{k_n})$ of the class number $h_{k_n}$ of $k_n$ satisfies
\[
v_p(h_{k_n})=\lambda(k_\infty/k)n+\mu(k_\infty/k)p^n+\nu(k_\infty/k)
\]
for sufficiently large $n\geq 0$. These invariants $\lambda(k_\infty/k)$, $\mu(k_\infty/k)$ and $\nu(k_\infty/k)$ are called the Iwasawa $\lambda$-, $\mu$- and $\nu$- invariants of $k_\infty/k$, respectively. Although the $\lambda$- and $\mu$-invariants of $k_\infty/k$ are classically defined as structural invariants of $X(k_\infty)$ as a $\Z_p\bbr{\Gal(k_\infty/k)}$-module, we only use the following properties of these invariants. 
\begin{prop}[{\cite[p.293, Remark~3 and p.300, Exercise~2~(c)]{NSW2008}}]\label{mu and lambda}
Keep the settings as above. Then the following hold. 
\begin{enumerate}
\item $\mu(k_\infty/k)=0$ if and only if $X(k_\infty)$ is finitely generated as a $\Z_p$-module. 

\item $\lambda(k_\infty/k)=\rank_{\Z_p}X(k_\infty)$. 
\end{enumerate}
\end{prop}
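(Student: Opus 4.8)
The plan is to deduce this theorem from Theorem~\ref{minimum rank 2} together with a genus-theoretic analysis made possible by the extra hypothesis that $k_\infty/k$ ramifies at every $p$-adic prime. Since $p$ splits completely in the quartic CM-field $k$, there are exactly four $p$-adic primes $\pe_1,\pe_2,\overline{\pe_1},\overline{\pe_2}$; set $\Sigma=\{\pe_1,\pe_2\}$, so that $\overline{\Sigma}=\{\overline{\pe_1},\overline{\pe_2}\}$ and $\Sigma\cap\overline{\Sigma}=\emptyset$. Because the maximal unramified extension $L(k_\infty)$ is contained in both $M_\Sigma(k_\infty)$ and $M_{\overline{\Sigma}}(k_\infty)$, the module $X(k_\infty)$ is a quotient of each of $X_\Sigma(k_\infty)$ and $X_{\overline{\Sigma}}(k_\infty)$; hence $\rank_p X(k_\infty)\leq\min\{\rank_p X_\Sigma(k_\infty),\rank_p X_{\overline{\Sigma}}(k_\infty)\}\leq 2$ by Theorem~\ref{minimum rank 2}. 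As the $p$-rank of $X(k_\infty)$ is finite, $X(k_\infty)$ is finitely generated over $\Z_p$, so $\mu(k_\infty/k)=0$ by Proposition~\ref{mu and lambda}(1), and $\lambda(k_\infty/k)=\rank_{\Z_p}X(k_\infty)\leq\rank_p X(k_\infty)\leq 2$ by Proposition~\ref{mu and lambda}(2).

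It remains to upgrade this to $X(k_\infty)\simeq\Z_p^2$ with trivial $\Gal(k_\infty/k)$-action. The key reduction is that $\Z_p$-freeness is automatic once $\lambda=2$: then $2=\rank_{\Z_p}X(k_\infty)\leq\dim_{\F_p}X(k_\infty)/pX(k_\infty)=\rank_p X(k_\infty)\leq 2$ forces equality throughout, and $\rank_{\Z_p}X(k_\infty)=\dim_{\F_p}X(k_\infty)/pX(k_\infty)$ means $X(k_\infty)$ is $\Z_p$-free of rank $2$; for a $\Z_p$-free module the action is trivial if and only if the coinvariants $X(k_\infty)_{\Gal(k_\infty/k)}$ have $\Z_p$-rank $2$. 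Thus everything comes down to proving $\lambda(k_\infty/k)=2$ together with this rank statement, and for both I would use the hypotheses essentially: the ramification at every $p$-adic prime lets me describe the coinvariants by genus theory through the $p$-class group of $k_e$ and the inertia subgroups of the totally ramified primes, while the condition $k_\infty\cap\widetilde{k}^\an=k$ forces $k_\infty$ to agree with $k^\cy_\infty$ modulo the anti-cyclotomic directions. Comparing the two as specializations of the several-variable Iwasawa module over the common extension $k_\infty\cdot\widetilde{k}^\an=k^\cy_\infty\cdot\widetilde{k}^\an$ — the reflection/specialization mechanism underlying Theorem~\ref{minimum rank 2} — should transport $X(k^\cy_\infty)\simeq\Z_p^2$ to $X(k_\infty)$, yielding both $\lambda=2$ and the triviality of the full $\Gal(k_\infty/k)$-action. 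This separation of the lower bound and the triviality from the mere upper bound is where I expect the main difficulty to lie; genus theory by itself cannot provide it, since $\lambda=1$ with trivial action is a priori consistent with the class-number formula derived below.

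Granting $X(k_\infty)\simeq\Z_p^2$ with trivial action, the class-number formula follows from an explicit genus computation. Let $v_1,\dots,v_t$ be the $p$-adic primes of $k_e$ (so $t\geq 4$, and each is totally ramified in $k_\infty/k_e$ by the choice of $e$), fix a generator $\gamma$ of $\Gamma=\Gal(k_\infty/k_e)$ with $T=\gamma-1$, choose inertia generators $\sigma_i\in\Gal(L(k_\infty)/k_e)$ mapping to $\gamma$, and set $y_i=\sigma_i\sigma_1^{-1}\in X(k_\infty)$. Abelianizing $\Gal(L(k_\infty)/k_n)$ and killing the inertia images yields, for $n\geq e$,
\[
A_n\simeq X(k_\infty)\big/\big(\omega_n X(k_\infty)+\nu_n\textstyle\sum_{i=2}^{t}\Z_p y_i\big),
\]
where $A_n$ is the $p$-part of the class group of $k_n$, $\omega_n=(1+T)^{p^{n-e}}-1$ and $\nu_n=\omega_n/T$. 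Since $T$ acts as zero, $\omega_n$ acts as zero and $\nu_n$ acts as $p^{n-e}$, so writing $W=\sum_{i=2}^{t}\Z_p y_i\subseteq\Z_p^2$ we obtain $A_n\simeq\Z_p^2/p^{n-e}W$. Finiteness of $A_n$ forces $W$ to have finite index in $\Z_p^2$, and then $|A_n|=p^{2(n-e)}[\Z_p^2:W]=p^{2(n-e)}|A_e|$. Taking $p$-valuations gives $v_p(h_{k_n})=2(n-e)+v_p(h_{k_e})=2n+v_p(h_{k_e})-2e$ for all $n\geq e$, which reconfirms $\mu(k_\infty/k)=0$ and $\lambda(k_\infty/k)=2$ and yields $\nu(k_\infty/k)=v_p(h_{k_e})-2e$.
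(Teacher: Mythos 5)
Your proposal does not prove the statement it is supposed to prove. Proposition~\ref{mu and lambda} is a completely general fact about an arbitrary $\Z_p$-extension $k_\infty/k$ of an arbitrary number field: it identifies the invariants $\mu(k_\infty/k)$ and $\lambda(k_\infty/k)$ from Iwasawa's class number formula with structural invariants of $X(k_\infty)$ as a $\Z_p$-module. What you have written is instead a proof sketch of Theorem~\ref{infinite Zp2}: you assume $k$ is a quartic CM-field in which $p$ splits completely, invoke Theorem~\ref{minimum rank 2}, and aim at $X(k_\infty)\simeq\Z_p^2$ and the formula $v_p(h_{k_n})=2n+v_p(h_{k_e})-2e$. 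None of these hypotheses appear in Proposition~\ref{mu and lambda}, and your argument could not possibly cover its generality (e.g.\ $p$ non-split, $k$ of arbitrary degree, $k_\infty/k$ with no ramification hypothesis). Worse, the argument is circular as a proof of the proposition: in your first paragraph you explicitly invoke Proposition~\ref{mu and lambda}(1) to pass from finite generation to $\mu(k_\infty/k)=0$, and Proposition~\ref{mu and lambda}(2) to write $\lambda(k_\infty/k)=\rank_{\Z_p}X(k_\infty)$ --- that is, you use the very statement to be proved. Logically the proposition sits upstream of everything you cite; the paper needs it precisely in order to convert rank bounds such as those in Theorem~\ref{minimum rank 2} into statements about $\lambda$ and $\mu$.

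The paper itself gives no argument beyond the remark that the proposition ``follows directly from the classical definition of the $\mu$- and $\lambda$-invariants,'' with a reference to \cite{NSW2008}, and that is indeed all that is required: by the structure theory of finitely generated torsion $\Z_p\bbr{T}$-modules, $X(k_\infty)$ is pseudo-isomorphic to a module of the form $\bigoplus_i\Z_p\bbr{T}/(p^{m_i})\oplus\bigoplus_j\Z_p\bbr{T}/(g_j(T)^{e_j})$ with $g_j$ distinguished, and classically $\mu(k_\infty/k)=\sum_i m_i$ and $\lambda(k_\infty/k)=\sum_j e_j\deg g_j$. Since a pseudo-isomorphism has finite kernel and cokernel, $X(k_\infty)$ is finitely generated over $\Z_p$ exactly when no summand $\Z_p\bbr{T}/(p^{m_i})$ with $m_i\geq 1$ occurs, i.e.\ exactly when $\mu(k_\infty/k)=0$; and since $\Q_p\otimes_{\Z_p}\Z_p\bbr{T}/(p^{m})=0$ while $\rank_{\Z_p}\Z_p\bbr{T}/(g^{e})=e\deg g$, one gets $\rank_{\Z_p}X(k_\infty)=\lambda(k_\infty/k)$ unconditionally. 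Your genus-theoretic computation in the final paragraph is a reasonable sketch of a step relevant to Theorem~\ref{infinite Zp2} (where the paper instead quotes Sands, Theorem~\ref{SandsThm3.1}), and you yourself flag that the key lower-bound and trivial-action step is unresolved there; but for the statement actually under review, the whole approach is misdirected, and the correct proof is the short structure-theoretic one above.
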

Here we use the notation $\rank_{\Z_p}M\coloneqq\dim_{\Q_p}(\Q_p\otimes_{\Z_p}M)$ for any $\Z_p$-module $M$. Proposition~\ref{mu and lambda} follows directly from the classical definition of the $\mu$- and $\lambda$-invariants. 

For a finite extension $k/\Q$ and a prime number $p$, there is at least one $\Z_p$-extension, namely the cyclotomic $\Z_p$-extension, which we denote by $k^\cy_\infty/k$. Moreover, the following theorem describes the number of independent $\Z_p$-extensions over $k$. 
\begin{thm}[{\cite[Theorem~13.4]{Washington1997}}]
Let $k/\Q$ be a finite extension and $p$ a prime number. Let $\widetilde{k}$ be the maximal multiple $\Z_p$-extension of $k$, that is, the composite of all $\Z_p$-extensions of $k$. Then $\Gal(\widetilde{k}/k)\simeq\Z_p^{r_2+1+\delta}$, where $r_2$ is the number of complex places of $k$ and $\delta$ is the defect of Leopoldt's Conjecture for $k$ and $p$. 
\end{thm}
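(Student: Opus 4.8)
The plan is to exhibit $\widetilde{k}$ as the maximal $\Z_p$-free quotient of a single, explicitly computable abelian pro-$p$-extension, and then to compute its rank by global class field theory. First I would check that every $\Z_p$-extension $k_\infty/k$ is unramified outside $p$. At a finite prime $v\nmid p$ the inertia subgroup of $\Gal(k_\infty/k)\simeq\Z_p$ is a quotient of the inertia of the local extension, whose wild part is trivial and whose tame part has order prime to $p$; hence it must vanish. At an archimedean place the inertia is a finite subgroup of $\Z_p$, hence trivial since $\Z_p$ is torsion-free. Therefore $\widetilde{k}\subseteq M$, where $M$ denotes the maximal abelian pro-$p$-extension of $k$ unramified outside $p$; write $G\coloneqq\Gal(M/k)$.

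Second, I would identify $\Gal(\widetilde{k}/k)$ with the maximal $\Z_p$-free quotient of $G$. Each $\Z_p$-extension of $k$ corresponds to a continuous surjection $G\twoheadrightarrow\Z_p$, and their compositum $\widetilde{k}$ is fixed by the intersection of the kernels of all continuous homomorphisms $G\to\Z_p$. Granting that $G$ is finitely generated over $\Z_p$ (established in the next step), the structure theorem gives $G\simeq\Z_p^{r}\oplus G_{\mathrm{tors}}$ with $G_{\mathrm{tors}}$ finite; every homomorphism to $\Z_p$ annihilates $G_{\mathrm{tors}}$ while those on the free part separate points, so the intersection of kernels is exactly $G_{\mathrm{tors}}$ and $\Gal(\widetilde{k}/k)\simeq\Z_p^{r}$ with $r=\rank_{\Z_p}G$. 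It remains to compute $r$.

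Third, I would set up the standard class field theory exact sequence. Let $U\coloneqq\prod_{\pe\mid p}U_\pe$, where $U_\pe$ is the pro-$p$ part of $\mathcal{O}_{k_\pe}^\times$, let $E\coloneqq\mathcal{O}_k^\times$ be the global units mapped diagonally into $U$, let $\overline{E}$ be the closure of its image, and let $A$ be the $p$-part of the ideal class group of $k$. The inertia at the primes above $p$ is the image of $U$ in $G$, and principal ideles act trivially, which yields an isomorphism $U/\overline{E}\xrightarrow{\sim}\Gal(M/H)$, where $H$ is the $p$-Hilbert class field, together with the exact sequence
\[
0\longrightarrow U/\overline{E}\longrightarrow G\longrightarrow A\longrightarrow 0.
\]
Since $A$ is finite and $\rank_{\Z_p}U=\sum_{\pe\mid p}[k_\pe:\Q_p]=[k:\Q]$, this sequence shows that $G$ is finitely generated over $\Z_p$ and gives $\rank_{\Z_p}G=[k:\Q]-\rank_{\Z_p}\overline{E}$.

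Finally, I would compute $\rank_{\Z_p}\overline{E}$. Dirichlet's unit theorem gives $\rank_{\Z}E=r_1+r_2-1$, where $r_1$ is the number of real places, and the $p$-adic independence of the global units inside $U$ fails exactly by the Leopoldt defect, which is defined so that $\rank_{\Z_p}\overline{E}=r_1+r_2-1-\delta$. Substituting this and $[k:\Q]=r_1+2r_2$ yields
\[
r=(r_1+2r_2)-(r_1+r_2-1-\delta)=r_2+1+\delta,
\]
as claimed. The only non-formal ingredient is the determination of $\rank_{\Z_p}\overline{E}$, that is, the precise $p$-adic rank of the global units embedded in the local units at $p$; this is precisely what $\delta$ measures, so the genuine difficulty is absorbed into the definition of the Leopoldt defect, and the remaining work is bookkeeping of the class field theory sequence and of the torsion in $G$.
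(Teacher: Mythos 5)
The paper offers no proof of this statement—it is quoted with a citation to Washington, Theorem~13.4—and your argument is essentially Washington's own proof of that theorem: reduce to the maximal abelian pro-$p$-extension $M/k$ unramified outside $p$, identify $\Gal(\widetilde{k}/k)$ with the maximal $\Z_p$-free quotient of $G=\Gal(M/k)$, and compute $\rank_{\Z_p}G=[k:\Q]-\rank_{\Z_p}\overline{E}=r_2+1+\delta$ from the class field theory sequence $0\to U/\overline{E}\to G\to A\to 0$ together with the definition of the Leopoldt defect. The structure is correct; the one inaccuracy is your claim in the first step that at a finite prime $v\nmid p$ the tame part of inertia ``has order prime to $p$'': the tame abelian inertia is cyclic of order $q_v-1$, where $q_v$ is the residue field size, and this can very well be divisible by $p$ (take $q_v\equiv 1\pmod{p}$). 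What is true is that its $p$-part is \emph{finite}, and a finite subgroup of the torsion-free group $\Z_p$ is trivial—the very argument you already use at the archimedean places—so the fix is immediate and nothing downstream is affected.
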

\begin{rem}
Leopoldt's Conjecture is equivalent to $\delta=0$ and is known to hold for abelian number fields. 
\end{rem}
This theorem implies that if there is at least one complex place of $k$, then there are infinitely many $\Z_p$-extensions over $k$. 

Now we introduce some slightly generalized notion of Iwasawa modules. Let $k/\Q$ be an algebraic extension and $p$ a prime number. We let $S_p(k)$ denote the set of all $p$-adic primes of $k$. For a subset $\Sigma$ of $S_p(k)$, we denote $M_\Sigma(k)/k$ the maximal abelian pro-$p$-extension that is unramified at all primes not contained in $\Sigma$, and $X_\Sigma(k)$ its Galois group. Let $K/k$ be an algebraic extension and put $\Sigma_K\coloneqq\{\Pe\in S_p(K)\mid\Pe\cap k\in\Sigma\}$. We use the notation $M_\Sigma(K)\coloneqq M_{\Sigma_K}(K)$ and $X_\Sigma(K)\coloneqq X_{\Sigma_K}(K)$ for simplicity. We also use the notation $M_\pe(K)\coloneqq M_{\{\pe\}}(K)$ and $X_\pe(K)\coloneqq X_{\{\pe\}}(K)$ for $\pe\in S_p(k)$. We remark that $M_\emptyset(K)=L(K)$ and $X_\emptyset(K)=X(K)$. When $K/k$ is a $\Z_p^d$-extension for some $d\geq 1$, we regard $X_\Sigma(K)$ as a $\Z_p\bbr{\Gal(K/k)}$-module as in the case where $\Sigma=\emptyset$, which we mentioned above, and call $X_\Sigma(K)$ the $\Sigma$-ramified Iwasawa module of $K/k$. 

We show some lemmas that we will frequently use in the present article. 
\begin{lem}\label{ur}
Let $k/\Q$ be a finite extension and $p$ a prime number that splits completely in $k$. Let $k_\infty/k$ be a $\Z_p$-extension that is ramified at each $p$-adic prime. Then $\widetilde{k}/k_\infty$ is unramified. Moreover, if $p$ is odd, then $M_p(k)/k_\infty$ is unramified. 
\end{lem}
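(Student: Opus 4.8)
The plan is to reduce everything to inertia at the $p$-adic primes and then run a rank count, using that $p$ splits completely to pin down the local inertia. First I would recall that any $\Z_p$-extension of $k$ is unramified outside $p$: for a prime $v\nmid p$ the pro-$p$ part of the local unit group $\mathcal{O}_{k_v}^\times$ is finite, so the inertia subgroup at $v$ of $\Gal(\widetilde{k}/k)\simeq\Z_p^{r_2+1+\delta}$, being a torsion-free group that embeds into this finite group, must be trivial. Hence $\widetilde{k}/k$ (and a fortiori $\widetilde{k}/k_\infty$) can ramify only at primes above $p$, and the same remark shows $\widetilde{k}\subseteq M_p(k)$ and that $M_p(k)/k$ is unramified outside $p$. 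So in both assertions it suffices to treat the $p$-adic primes.

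Fix a $p$-adic prime $\pe$ of $k$. Because $p$ splits completely, $k_\pe\simeq\Q_p$, so by local class field theory the image of the local inertia in the maximal abelian pro-$p$-extension of $k_\pe$ is isomorphic to the pro-$p$ part of $\Z_p^\times$; for $p$ odd this is $\Z_p$, and in general it is a $\Z_p$-module of rank $1$. Consequently the inertia group $I_\pe(\widetilde{k}/k)$, being a quotient of this local inertia, has $\Z_p$-rank at most $1$. On the other hand, the restriction map $\Gal(\widetilde{k}/k)\to\Gal(k_\infty/k)$ carries $I_\pe(\widetilde{k}/k)$ onto $I_\pe(k_\infty/k)$, and since $k_\infty/k$ is ramified at $\pe$ the latter is a nontrivial closed subgroup of $\Z_p$, hence of rank $1$. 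Therefore $I_\pe(\widetilde{k}/k)$ has rank exactly $1$ and the kernel of $I_\pe(\widetilde{k}/k)\twoheadrightarrow I_\pe(k_\infty/k)$ has rank $0$. This kernel is precisely the inertia at a prime of $k_\infty$ above $\pe$ in the extension $\widetilde{k}/k_\infty$; being a subgroup of the torsion-free group $\Gal(\widetilde{k}/k)$ of rank $0$, it is trivial. Letting $\pe$ range over all $p$-adic primes gives that $\widetilde{k}/k_\infty$ is unramified.

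For the second assertion I would replace $\widetilde{k}$ by $M_p(k)$, where the rank count no longer closes because $\Gal(M_p(k)/k)$ need not be torsion-free. Here I would use the oddness of $p$ directly: for $p$ odd the local inertia at $\pe$ is the torsion-free group $\Z_p$, so its image $I_\pe(M_p(k)/k)$ is procyclic, i.e. a quotient of $\Z_p$. As before this group surjects onto $I_\pe(k_\infty/k)\simeq\Z_p$; but a quotient of $\Z_p$ that surjects onto $\Z_p$ can only be $\Z_p$ itself (a finite quotient cannot surject onto $\Z_p$), and a surjective endomorphism of $\Z_p$ is an isomorphism. Hence the surjection $I_\pe(M_p(k)/k)\to I_\pe(k_\infty/k)$ is an isomorphism, its kernel — the inertia above $\pe$ in $M_p(k)/k_\infty$ — is trivial, and $M_p(k)/k_\infty$ is unramified.

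The step I expect to be the main obstacle is exactly the passage from ``rank $0$'' to ``trivial'' for the residual inertia. For $\widetilde{k}$ this is handed to us by the torsion-freeness of $\Gal(\widetilde{k}/k)$, but for $M_p(k)$ that input is unavailable, which is why the argument must instead exploit the torsion-freeness of the local inertia; this is precisely the point at which the oddness of $p$ becomes essential, since for $p=2$ the pro-$p$ part of $\Z_2^\times$ carries a $2$-torsion summand, so $I_\pe(M_p(k)/k)$ need not be procyclic and the final statement can genuinely fail.
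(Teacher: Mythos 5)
Your proof is correct. Note first that the paper contains no argument of its own for this lemma: its ``proof'' is the single line ``For example, see \cite[Proposition~1]{Ozaki1997}.'' So there is no in-paper proof to compare against; what you have written is the standard class-field-theoretic argument that underlies the cited result, and your two assertions are exactly the two that need separate treatment. Your structure is sound: (i) inertia at $v\nmid p$ in any abelian pro-$p$-extension is a continuous image of the pro-$p$ part of $\mathcal{O}_{k_v}^\times$, which is finite, so torsion-freeness of $\Gal(\widetilde{k}/k)\simeq\Z_p^{r_2+1+\delta}$ kills it and reduces everything to the $p$-adic primes; (ii) complete splitting gives $k_\pe\simeq\Q_p$, so the relevant local inertia image has $\Z_p$-rank one, and the surjection onto $I_\pe(k_\infty/k)\simeq\Z_p$ (nontrivial closed subgroups of $\Z_p$ are free of rank one) forces the kernel --- which you correctly identify as $I_\Pe(\widetilde{k}/k)\cap\Gal(\widetilde{k}/k_\infty)=I_\Pe(\widetilde{k}/k_\infty)$, and similarly for $M_p(k)$ --- to have rank zero; (iii) for $\widetilde{k}$ this kernel is trivial by torsion-freeness of the ambient group, while for $M_p(k)$ one instead uses, as you do, that for odd $p$ the pro-$p$ completion of $\Z_p^\times$ is $1+p\Z_p\simeq\Z_p$, so $I_\pe(M_p(k)/k)$ is procyclic and the surjection onto $\Z_p$ is an isomorphism ($\Z_p$ being Hopfian). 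Your closing diagnosis of where oddness of $p$ enters, and why $p=2$ genuinely breaks the second assertion ($\Z_2^\times$ has a $\Z/2$ summand), is exactly right, as is the observation that the first assertion needs no parity hypothesis.

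Two cosmetic points only. In your first paragraph the phrase ``a torsion-free group that \emph{embeds into} this finite group'' has the arrow backwards: the inertia subgroup of $\Gal(\widetilde{k}/k)$ is a continuous \emph{quotient} (image) of the pro-$p$ part of $\mathcal{O}_{k_v}^\times$, hence finite, and it is torsion-free because it is a \emph{subgroup} of $\Gal(\widetilde{k}/k)$; the conclusion (trivial) is unaffected. Second, if one reads ``unramified'' as including archimedean places, a one-line remark closes this too: inertia at a real place is generated by an element of order dividing $2$, which is trivial in the torsion-free group $\Gal(\widetilde{k}/k)$, respectively in the pro-$p$ group $\Gal(M_p(k)/k)$ with $p$ odd. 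Neither point is a gap in the mathematics.
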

\begin{proof}
For example, see \cite[Proposition~1]{Ozaki1997}. 
\end{proof}
For a profinite group $G$ and a profinite $G$-module $M$, we denote the maximal $G$-coinvariant quotient of $M$ by $M_G$. For example, if $G$ is isomorphic to $\Z_p$ and $M$ is a pro-$p$-group, then $M$ is a $\Z_p\bbr{G}$-module and $M_G=M/(\sigma-1)M$, where $\sigma$ is any topological generator of $G$. 
\begin{lem}\label{fiveterm}
Let $p$ be a prime number, $k/\Q$ an algebraic extension, $F/k$ a pro-$p$-extension and $K/F$ an abelian pro-$p$-extension such that $K/k$ is a Galois extension. Let $\Sigma$ be a set of $p$-adic primes of $k$. Then there is an exact sequence
\[
\begin{tikzpicture}
\node (a) at (-2,0) {$\bigwedge^2\Gal(K/F)$}; 
\node (b) at (2,0) {$X_{\Sigma}(K)_{\Gal(K/F)}$}; 
\node (c) at (6.5,0) {$\Gal(M_\Sigma(K)\cap F^\ab/F)$}; 
\node (d) at (10.3,0) {$\Gal(K/F)$}; 
\node (e) at (12.4,0) {$0$};
\draw[->] (a) -- (b); \draw[->] (b) -- (c); \draw[->] (c) -- (d); \draw[->] (d) -- (e); 
\end{tikzpicture}
\]
of $\Z_p\bbr{\Gal(F/k)}$-modules. Here, $\bigwedge^2$ is the exterior product over $\Z_p$ and $F^\ab/F$ is the maximal abelian extension. 
\end{lem}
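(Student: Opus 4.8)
The plan is to identify the displayed sequence with the tail of the five-term exact sequence of the Lyndon--Hochschild--Serre spectral sequence in group homology, taken with trivial $\Z_p$-coefficients. First I would bring in the field $M_\Sigma(K)$ and check that $M_\Sigma(K)/F$ is Galois: since $K/k$ is Galois, the set $\Sigma_K$ of $p$-adic primes of $K$ lying over $\Sigma$ is stable under $\Gal(K/k)$, so $M_\Sigma(K)$, being the maximal abelian pro-$p$-extension of $K$ that is unramified outside $\Sigma_K$, is preserved by every automorphism of $\overline{\Q}$ fixing $k$; in particular $M_\Sigma(K)/k$ and $M_\Sigma(K)/F$ are Galois (here I read the hypotheses as including $F/k$ Galois, which is needed merely for $\Gal(F/k)$ to be a group). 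Writing $\mathcal{H}=\Gal(M_\Sigma(K)/F)$, $H=\Gal(K/F)$ and $X=X_\Sigma(K)=\Gal(M_\Sigma(K)/K)$, I obtain a short exact sequence of pro-$p$-groups
\[
1\to X\to\mathcal{H}\to H\to 1,
\]
in which $X$ is abelian (as $M_\Sigma(K)/K$ is abelian) and $H$ is abelian (as $K/F$ is abelian).

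Next come the identifications of the four terms. By Galois theory the maximal abelian subextension of $M_\Sigma(K)/F$ is $M_\Sigma(K)\cap F^\ab$, so $H_1(\mathcal{H},\Z_p)=\mathcal{H}^\ab$ is identified with $\Gal(M_\Sigma(K)\cap F^\ab/F)$; since $H$ is abelian one has $H_1(H,\Z_p)=H=\Gal(K/F)$ and $H_2(H,\Z_p)=\bigwedge^2_{\Z_p}H=\bigwedge^2\Gal(K/F)$ (the standard computation of the first two continuous homology groups of an abelian pro-$p$-group, with the completed exterior product); and $H_0\bigl(H,H_1(X,\Z_p)\bigr)=X_H=X_\Sigma(K)_{\Gal(K/F)}$ because $X$ is abelian. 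I would then feed the extension above into the five-term exact sequence
\[
H_2(\mathcal{H},\Z_p)\to H_2(H,\Z_p)\to X_H\to H_1(\mathcal{H},\Z_p)\to H_1(H,\Z_p)\to 0
\]
attached to the spectral sequence $H_p\bigl(H,H_q(X,\Z_p)\bigr)\Rightarrow H_{p+q}(\mathcal{H},\Z_p)$ (cf.\ \cite{NSW2008}); inserting the identifications above and discarding the leftmost term $H_2(\mathcal{H},\Z_p)$, so that exactness is claimed only from $\bigwedge^2\Gal(K/F)$ onward, yields precisely the asserted four-term sequence.

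It remains to upgrade this to a sequence of $\Z_p\bbr{\Gal(F/k)}$-modules, and I expect this to be the main point requiring care. Here I would work inside $\widehat{G}=\Gal(M_\Sigma(K)/k)$, in which both $X$ and $\mathcal{H}$ are normal (using $F/k$ Galois), and let $\Gal(F/k)=\widehat{G}/\mathcal{H}$ act on the extension $1\to X\to\mathcal{H}\to H\to 1$ by conjugation by lifts. Conjugation by an element of $\mathcal{H}$ is inner and hence acts trivially on every homology group in sight, so the action is well defined, that is, it factors through $\Gal(F/k)$, and it recovers the natural $\Gal(F/k)$-actions on $X_\Sigma(K)_{\Gal(K/F)}$, on $\bigwedge^2\Gal(K/F)$, on $\Gal(M_\Sigma(K)\cap F^\ab/F)$ and on $\Gal(K/F)$. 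Since the five-term exact sequence is functorial with respect to automorphisms of the group extension, each of its maps commutes with this action and is therefore $\Z_p\bbr{\Gal(F/k)}$-linear. The delicate part is precisely to verify that the outer action of $\Gal(F/k)$ is compatible with the transgression and edge maps of Hochschild--Serre, and that the abstract identifications of the previous paragraph are themselves $\Gal(F/k)$-equivariant rather than merely isomorphisms of $\Z_p$-modules.
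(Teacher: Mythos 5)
Your proof is correct and takes essentially the same approach as the paper: both apply the tail of the five-term exact sequence in group homology with trivial $\Z_p$-coefficients to the extension $1\to X_\Sigma(K)\to\Gal(M_\Sigma(K)/F)\to\Gal(K/F)\to 1$ and make the same identifications $\Homo_2(\Gal(K/F),\Z_p)\simeq\bigwedge^2\Gal(K/F)$, $\Homo_1(\Gal(K/F),\Z_p)\simeq\Gal(K/F)$ and $\Homo_1(\Gal(M_\Sigma(K)/F),\Z_p)\simeq\Gal(M_\Sigma(K)\cap F^\ab/F)$. The only difference is that you make explicit two points the paper leaves implicit in its citation of \cite[Corollary~7.2.6]{RZ2010}, namely that $M_\Sigma(K)/F$ is Galois by maximality and that the $\Gal(F/k)$-module structure comes from the outer conjugation action together with the functoriality of the five-term sequence.
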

\begin{proof}
Put $\Gamma\coloneqq\Gal(K/F)$. Then there is an exact sequence
\[
\begin{tikzpicture}
\node (a) at (0,0) {$\Homo_2(\Gamma, \Z_p)$}; 
\node (b) at (3.5,0) {$\Homo_1(X_\Sigma(K), \Z_p)_\Gamma$}; 
\node (c) at (8,0) {$\Homo_1(\Gal(M_\Sigma(K)/F), \Z_p)$}; 
\node (d) at (12.2,0) {$\Homo_1(\Gamma, \Z_p)$}; 
\node (e) at (14.6,0) {$0$};
\draw[->] (a) -- (b); \draw[->] (b) -- (c); \draw[->] (c) -- (d); \draw[->] (d) -- (e); 
\end{tikzpicture}
\]
of $\Z_p\bbr{\Gal(F/k)}$-modules, which is a part of the five term exact sequence in group homology (see \cite[Corollary~7.2.6]{RZ2010}, for example). One can see $\Homo_1(\Gamma, \Z_p)\simeq\Gamma^\ab=\Gamma$ and $\Homo_2(\Gamma, \Z_p)\simeq\bigwedge^2\Gamma$, by general group theory (see \cite[11.4.4, 11.4.16]{Robinson1996} and \cite[Lemma~6.8.6~(b)]{RZ2010}, as well). We also have $\Homo_1(\Gal(M_\Sigma(K)/F), \Z_p)\simeq\Gal(M_\Sigma(K)/F)^\ab=\Gal(M_\Sigma(K)\cap F^\ab/F)$. We complete the proof by combining the above. 
\end{proof}
For a given $\Z_p$-module $M$, we use the notation $\rank_p M\coloneqq\dim_{\F_p}(\F_p\otimes_{\Z_p}M)$, the $p$-rank of $M$. We use the following lemma, which is a direct consequence of Lemma~\ref{fiveterm}, to bound $p$-ranks of Iwasawa modules of $\Z_p$-extensions. 
\begin{lem}\label{prankineq}
In the setting of Lemma~\ref{fiveterm}, we have 
\[
\rank_p X_\Sigma(F)\leq \rank_p X_\Sigma(K)_{\Gal(K/F)}+\rank_p\Gal(K/F).
\] 
\end{lem}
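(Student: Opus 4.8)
The plan is to combine two observations: that $X_\Sigma(F)$ is a quotient of the group $\Gal(M_\Sigma(K)\cap F^\ab/F)$ that already appears in the exact sequence of Lemma~\ref{fiveterm}, and that the functor $\F_p\otimes_{\Z_p}-$ is right exact, so that $\rank_p$ is subadditive on short exact sequences of $\Z_p$-modules. Granting these, the inequality will drop out of the four-term sequence almost formally.

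For the first observation, I would first check that $M_\Sigma(F)\subseteq M_\Sigma(K)$. Since $M_\Sigma(F)/F$ is an abelian pro-$p$-extension unramified outside $\Sigma$, the compositum $M_\Sigma(F)K/K$ is again abelian pro-$p$ (its Galois group embeds into the abelian pro-$p$-group $\Gal(M_\Sigma(F)/F)$ by restriction) and is unramified outside $\Sigma$, because unramifiedness at a prime is preserved under the base change $F\to K$: at a prime $\Pe$ of $K$ with $\Pe\cap k\notin\Sigma$, the inertia subgroup of $\Gal(M_\Sigma(F)K/K)$ injects into that of $\Gal(M_\Sigma(F)/F)$ at $\Pe\cap F$, which is trivial. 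By the maximality of $M_\Sigma(K)$ this forces $M_\Sigma(F)K\subseteq M_\Sigma(K)$, hence $M_\Sigma(F)\subseteq M_\Sigma(K)$. As also $M_\Sigma(F)\subseteq F^\ab$, we get $M_\Sigma(F)\subseteq M_\Sigma(K)\cap F^\ab$, so restriction yields a surjection $\Gal(M_\Sigma(K)\cap F^\ab/F)\twoheadrightarrow\Gal(M_\Sigma(F)/F)=X_\Sigma(F)$, and therefore
\[
\rank_p X_\Sigma(F)\le\rank_p\Gal(M_\Sigma(K)\cap F^\ab/F).
\]

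For the second observation, write $C\coloneqq\Gal(K/F)$ and let $W$ be the image of $X_\Sigma(K)_{C}$ inside $\Gal(M_\Sigma(K)\cap F^\ab/F)$ under the map supplied by Lemma~\ref{fiveterm}. Exactness of the four-term sequence gives a short exact sequence
\[
0\longrightarrow W\longrightarrow\Gal(M_\Sigma(K)\cap F^\ab/F)\longrightarrow C\longrightarrow 0,
\]
in which $W$ is a quotient of $X_\Sigma(K)_{C}$. Applying the right-exact functor $\F_p\otimes_{\Z_p}-$ to this sequence gives $\rank_p\Gal(M_\Sigma(K)\cap F^\ab/F)\le\rank_p W+\rank_p C$, while applying it to the surjection $X_\Sigma(K)_{C}\twoheadrightarrow W$ gives $\rank_p W\le\rank_p X_\Sigma(K)_{C}$. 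Chaining these two estimates with the displayed inequality from the previous paragraph yields exactly
\[
\rank_p X_\Sigma(F)\le\rank_p X_\Sigma(K)_{\Gal(K/F)}+\rank_p\Gal(K/F).
\]

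I expect the only genuinely delicate step to be the verification that $M_\Sigma(F)\subseteq M_\Sigma(K)$, i.e.\ that composing with $K$ preserves the unramified-outside-$\Sigma$ condition; everything afterwards is formal. In particular, the leading term $\bigwedge^2\Gal(K/F)$ of the sequence of Lemma~\ref{fiveterm} plays no active role here: it only guarantees that $W$ is a genuine quotient of $X_\Sigma(K)_{\Gal(K/F)}$ rather than an isomorphic copy, which is all that the subadditivity of $\rank_p$ requires.
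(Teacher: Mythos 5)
Your proposal is correct and takes essentially the same approach as the paper, which states the lemma as a direct consequence of Lemma~\ref{fiveterm}: the intended argument is precisely your chain of a surjection $\Gal(M_\Sigma(K)\cap F^\ab/F)\twoheadrightarrow X_\Sigma(F)$ (valid because $M_\Sigma(F)K/K$ is abelian pro-$p$ and unramified outside $\Sigma$, so $M_\Sigma(F)\subseteq M_\Sigma(K)\cap F^\ab$) combined with subadditivity of $\rank_p$ coming from the right-exactness of $\F_p\otimes_{\Z_p}-$ applied to the sequence of Lemma~\ref{fiveterm}. Your verification of the one non-formal step, and your observation that the $\bigwedge^2\Gal(K/F)$ term is irrelevant because only the quotient property of the image $W$ is needed, are both accurate.
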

We also use the following well-known lemma. 
\begin{lem}\label{central}
Let $k/\Q$ be an algebraic extension, $p$ a prime number and $K/k$ a $\Z_p$-extension. Let $\Sigma$ be a set of $p$-adic primes of $k$. Then we have $X_\Sigma(K)_{\Gal(K/k)}=\Gal(M_\Sigma(K)\cap k^\ab/K)$.
\end{lem}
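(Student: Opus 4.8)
The plan is to realise the coinvariant module as the image of $X_\Sigma(K)$ inside the maximal abelian quotient of $\Gal(M_\Sigma(K)/k)$. Write $\Gamma\coloneqq\Gal(K/k)$, fix a topological generator $\sigma$ of $\Gamma\simeq\Z_p$, and set $G\coloneqq\Gal(M_\Sigma(K)/k)$ and $X\coloneqq X_\Sigma(K)=\Gal(M_\Sigma(K)/K)$. First I would observe that $M_\Sigma(K)/k$ is Galois by the maximality defining $M_\Sigma(K)$, so that $G$ is well defined and $X$ is a closed normal subgroup with $G/X\simeq\Gamma$; moreover $X$ is abelian, being the Galois group of an abelian extension. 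By the very definition recalled before Lemma~\ref{fiveterm}, the coinvariant quotient is $X_\Sigma(K)_\Gamma=X/(\sigma-1)X$, so the whole problem is to match $X/(\sigma-1)X$ with $\Gal(M_\Sigma(K)\cap k^\ab/K)$.

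The core step is to identify the closed commutator subgroup $\overline{[G,G]}$. Since $G/X\simeq\Gamma$ is abelian, $\overline{[G,G]}\subseteq X$. Choosing a lift $\widetilde{\sigma}\in G$ of $\sigma$, the group $G$ is topologically generated by $X$ together with $\widetilde{\sigma}$, and each element can be written as $x\widetilde{\sigma}^{n}$ with $x\in X$ and $n\in\Z_p$, where $\widetilde{\sigma}\,x\,\widetilde{\sigma}^{-1}=\sigma\cdot x$. A direct commutator computation, using that $X$ is abelian, shows that every commutator $[x_1\widetilde{\sigma}^{n_1},x_2\widetilde{\sigma}^{n_2}]$ equals $(\sigma^{n_1}-1)x_2-(\sigma^{n_2}-1)x_1$ in module notation, and each such term lies in $(\sigma-1)X$ because $\sigma^{n}-1\in(\sigma-1)\Z_p\bbr{\Gamma}$. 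Conversely $(\sigma-1)x=[\widetilde{\sigma},x]\in\overline{[G,G]}$. Hence $\overline{[G,G]}=(\sigma-1)X$, which is already closed as the continuous image of the compact group $X$. Consequently $G^\ab=G/(\sigma-1)X$, and the image of $X$ in $G^\ab$ is precisely $X/(\sigma-1)X=X_\Sigma(K)_\Gamma$.

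It then remains to translate this into field theory. The fixed field of $\overline{[G,G]}$ is the maximal subextension of $M_\Sigma(K)/k$ that is abelian over $k$, namely $M_\Sigma(K)\cap k^\ab$, so $G^\ab=\Gal(M_\Sigma(K)\cap k^\ab/k)$. Because $K/k$ is abelian we have $K\subseteq k^\ab$, and trivially $K\subseteq M_\Sigma(K)$, whence $K\subseteq M_\Sigma(K)\cap k^\ab$. Since $X$ is exactly the subgroup of $G$ fixing $K$, its image $X_\Sigma(K)_\Gamma$ in $G^\ab=\Gal(M_\Sigma(K)\cap k^\ab/k)$ is the subgroup fixing $K$, that is $\Gal(M_\Sigma(K)\cap k^\ab/K)$, giving the asserted equality.

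The only genuinely delicate point is the identification $\overline{[G,G]}=(\sigma-1)X$ in the profinite setting: one must confirm that no commutators outside $(\sigma-1)X$ arise and that the resulting submodule is topologically closed. I expect this to be routine given that $X$ is abelian and $\Gamma$ is procyclic, and everything else is a formal consequence of Galois theory and the definition of the coinvariant quotient; so this commutator identification is the step that will demand the most care.
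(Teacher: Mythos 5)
Your proof is correct, but it takes a genuinely different route from the paper. The paper deduces Lemma~\ref{central} in one line from its homological Lemma~\ref{fiveterm}: taking $F=k$ there, the term $\bigwedge^2\Gal(K/k)$ vanishes because $\Gal(K/k)\simeq\Z_p$ is procyclic, so the five-term sequence degenerates to a short exact sequence
\[
0\to X_\Sigma(K)_{\Gal(K/k)}\to\Gal(M_\Sigma(K)\cap k^\ab/k)\to\Gal(K/k)\to 0,
\]
and the coinvariants are identified with the kernel of the restriction map, i.e.\ with $\Gal(M_\Sigma(K)\cap k^\ab/K)$. You instead prove the statement by hand, via the classical commutator computation (in the style of Washington's Lemma~13.14): writing $G=\Gal(M_\Sigma(K)/k)$, $X=X_\Sigma(K)$, you show $\overline{[G,G]}=(\sigma-1)X$ by the identity $[x_1\widetilde{\sigma}^{n_1},x_2\widetilde{\sigma}^{n_2}]=(\sigma^{n_1}-1)x_2-(\sigma^{n_2}-1)x_1$ together with $(\sigma-1)x=[\widetilde{\sigma},x]$, and your treatment of the delicate points is sound: $\sigma^{n}-1\in(\sigma-1)\Z_p\bbr{\Gal(K/k)}$ for all $n\in\Z_p$, and $(\sigma-1)X$ is closed as the continuous image of the compact group $X$. (Two spots deserve one extra line each: the decomposition $g=x\widetilde{\sigma}^{n}$ uses that $G$ is pro-$p$, so $\tgen{\widetilde{\sigma}}\simeq\Z_p$ and $\widetilde{\sigma}^{n}$ makes sense for $n\in\Z_p$; and the Galois-ness of $M_\Sigma(K)/k$ needs that $\Sigma_K$ is stable under $\Gal(\overline{\Q}/k)$, which is immediate since $\Sigma_K$ is defined by restriction of primes to $k$.) In effect you reprove, in the procyclic case, exactly the portion of the five-term sequence the paper invokes: your final step $X\cap\overline{[G,G]}=(\sigma-1)X$ is the left-exactness that makes the paper's sequence start with $0$. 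Your argument buys self-containedness (no group homology) and makes the injectivity transparent; the paper's buys brevity and uniformity, since Lemma~\ref{fiveterm} is needed anyway in greater generality (non-procyclic $\Gal(K/F)$, where the obstruction $\bigwedge^2\Gal(K/F)$ is genuinely present).
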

\begin{proof}
We have an exact sequence
\[
\begin{tikzpicture}
\node (a) at (-3,0) {$0$}; 
\node (b) at (0,0) {$X_\Sigma(K)_{\Gal(K/k)}$}; 
\node (c) at (4,0) {$\Gal(M_\Sigma(K)\cap k^\ab/k)$}; 
\node (d) at (8,0) {$\Gal(K/k)$}; 
\node (e) at (10.5,0) {$0$};
\draw[->] (a) -- (b); \draw[->] (b) -- (c); \draw[->] (c) -- (d); \draw[->] (d) -- (e); 
\end{tikzpicture}
\]
of $\Z_p$-modules by Lemma~\ref{fiveterm}, because $\bigwedge^2\Gal(K/k)=0$ since $\Gal(K/k)$ is cyclic as a $\Z_p$-module. Then it follows that
\[
X_\Sigma(K)_{\Gal(K/k)}=\Ker(\Gal(M_\Sigma(K)\cap k^\ab/k)\to\Gal(K/k))=\Gal(M_\Sigma(K)\cap k^\ab/K), 
\]
as desired. 
\end{proof}
\subsection{Anti-cyclotomic extensions}\label{Subsection Anti-cyclotomic extensions}
Let $k$ be a CM-field and $p$ an odd prime number. We denote the maximal totally real subfield of $k$ by $k^+$ and let $J\in\Gal(k/k^+)$ be the complex conjugation. Put $\ve_+\coloneqq(1+J)/2, \ve_-\coloneqq(1-J)/2\in\Z_p\br{\Gal(k/k^+)}$, and for a given $\Z_p\br{\Gal(k/k^+)}$-module $M$, we define $M^+\coloneqq \ve_+M$ and $M^-\coloneqq \ve_-M$. Then we obtain a decomposition $M=M^+\oplus M^-$. 

Put $\Gamma\coloneqq\Gal(\widetilde{k}/k)$. Then we have a decomposition $\Gamma=\Gamma^+\oplus\Gamma^-$ as mentioned above. We define $\widetilde{k}^\an\coloneqq(\widetilde{k})^{\Gamma^+}$ and call it the maximal multiple anti-cyclotomic $\Z_p$-extension of $k$. Also, a $\Z_p$-extension $k_\infty/k$ is said to be an anti-cyclotomic $\Z_p$-extension if $k_\infty\subseteq\widetilde{k}^\an$. In other words, a given $\Z_p$-extension $k_\infty/k$ is an anti-cyclotomic $\Z_p$-extension if and only if $k_\infty/k^+$ is a Galois extension such that $J$ acts on $\Gal(k_\infty/k)$ as inverse. 

We remark that $(\widetilde{k})^{\Gamma^-}$ coincides with $k^\cy_\infty$ if Leopoldt's Conjecture holds for $k$ and $p$. 
\begin{lem}\label{Galois over real}
Let $k$ be a CM-field and $p$ an odd prime number. Suppose that Leopoldt's Conjecture holds for $k$ and $p$. Then any intermediate field $K$ of $\widetilde{k}/k^\cy_\infty$ is a Galois extension over $k^+$. 
\end{lem}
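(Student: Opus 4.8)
The plan is to translate the statement into one about Galois groups and then exploit that complex conjugation acts as $-1$ on the minus part $\Gamma^-$. Since $\widetilde{k}$ is the composite of all $\Z_p$-extensions of $k$, it is preserved by any automorphism of $\overline{\Q}/k^+$ that fixes $k^+$: such an automorphism restricts to an element of $\Gal(k/k^+)$ and permutes the $\Z_p$-extensions of $k$ among themselves, hence fixes their composite. Therefore $\widetilde{k}/k^+$ is Galois. I would set $G\coloneqq\Gal(\widetilde{k}/k^+)$, which contains $\Gamma=\Gal(\widetilde{k}/k)$ as a normal subgroup of index $2$. Because $\Gamma$ is abelian (it is $\simeq\Z_p^{r_2+1}$ under Leopoldt's Conjecture), the conjugation action of $G$ on $\Gamma$ factors through $G/\Gamma\simeq\Gal(k/k^+)=\langle J\rangle$ and coincides with the action of $J$ used to define the decomposition $\Gamma=\Gamma^+\oplus\Gamma^-$.

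The first reduction is that an intermediate field $K$ of $\widetilde{k}/k^\cy_\infty$ is Galois over $k^+$ if and only if the closed subgroup $H\coloneqq\Gal(\widetilde{k}/K)$ is normal in $G$. Since $\Gamma$ is abelian and $H\subseteq\Gamma$, normality of $H$ in $G$ is equivalent to $H$ being stable under the conjugation action of $J$. Next, using that $k^\cy_\infty\subseteq K$ together with the identification $(\widetilde{k})^{\Gamma^-}=k^\cy_\infty$ furnished by Leopoldt's Conjecture (the remark preceding the lemma), we get $\Gal(\widetilde{k}/k^\cy_\infty)=\Gamma^-$ and hence $H\subseteq\Gamma^-$. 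On $\Gamma^-=\ve_-\Gamma$ the elementary computation $J\ve_-=(J-J^2)/2=-\ve_-$ (valid since $J^2=1$ and $p$ is odd, so the idempotents $\ve_\pm$ exist) shows that $J$ acts as multiplication by $-1$. As $H$ is a closed subgroup of the pro-$p$ module $\Gamma$, it is automatically a $\Z_p$-submodule, so $J(H)=-H=H$. Thus $H$ is $J$-stable, hence normal in $G$, and $K/k^+$ is Galois, as desired.

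The only genuinely delicate point is the bookkeeping around the conjugation action: one must verify that conjugation by a lift of $J$ in $G$ really induces the same involution on $\Gamma$ that is used to split $\Gamma=\Gamma^+\oplus\Gamma^-$, and that this action is well defined independently of the chosen lift, which holds precisely because $\Gamma$ is abelian. Everything else is formal, the essential analytic input being the identification $k^\cy_\infty=(\widetilde{k})^{\Gamma^-}$, which is exactly where the hypothesis that Leopoldt's Conjecture holds for $k$ and $p$ is used.
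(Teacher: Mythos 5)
Your proof is correct and is essentially the paper's own argument: both rest on the observation that, under Leopoldt's Conjecture, $J$ acts as inversion on $\Gal(\widetilde{k}/k^\cy_\infty)=\Gamma^-$, so every closed subgroup $\Gal(\widetilde{k}/K)$ is automatically $J$-stable, whence $\Gal(k/k^+)$ acts on the quotient $\Gal(K/k)$ and $K/k^+$ is Galois. You simply make explicit the bookkeeping the paper leaves implicit (that $\widetilde{k}/k^+$ is Galois, that the conjugation action is well defined since $\Gamma$ is abelian, and that closed subgroups of $\Gamma$ are $\Z_p$-submodules).
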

\begin{proof}
While $\Gal(\widetilde{k}/K)$ is a closed subgroup of $\Gal(\widetilde{k}/k^\cy_\infty)$, it is also a submodule as a $\Gal(k/k^+)$-module since $J$ acts on $\Gal(\widetilde{k}/k^\cy_\infty)$ as inverse. Thus $\Gal(k/k^+)$ acts on the quotient $\Gal(K/k)=\Gal(\widetilde{k}/k)/\Gal(\widetilde{k}/K)$ as well. This means that $K/k^+$ is a Galois extension. 
\end{proof}
If $k$ is an imaginary quadratic field, then the maximal multiple anti-cyclotomic $\Z_p$-extension $\widetilde{k}^\an/k$ is a $\Z_p$-extension, so we write $k^\an_\infty\coloneqq\widetilde{k}^\an$. Then any $\Z_p$-extension $k_\infty/k$ clearly satisfies $k_\infty\subseteq k^\cy_\infty k^\an_\infty$, since $k^\cy_\infty k^\an_\infty=\widetilde{k}$. The following Lemma \ref{anti-cyclotomic intersection} allows us to deal with $\Z_p$-extensions over a CM-field like those over an imaginary quadratic field. We will use Lemma~\ref{anti-cyclotomic intersection} in the proof of Theorems \ref{infinite Zp2}  and \ref{infinite Zpd}. 
\begin{lem}\label{anti-cyclotomic intersection}
Let $k$ be a CM-field and $p$ an odd prime number. Suppose that Leopoldt's Conjecture holds for $k$ and $p$. Then, for each $\Z_p$-extension $k_\infty/k$ such that $k_\infty\neq k^\cy_\infty$, there exists a unique anti-cyclotomic $\Z_p$-extension $k^\an_\infty/k$ such that $k_\infty\subseteq k^\cy_\infty k^\an_\infty$. In addition, this anti-cyclotomic $\Z_p$-extension satisfies $k_\infty\cap k^\an_\infty=k_\infty\cap\widetilde{k}^\an$. 
\end{lem}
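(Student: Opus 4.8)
The plan is to translate everything into the Galois theory of the finite-corank abelian extension $\widetilde{k}/k$. Write $\Gamma=\Gal(\widetilde{k}/k)=\Gamma^+\oplus\Gamma^-$, and recall that by definition $\Gamma^+=\Gal(\widetilde{k}/\widetilde{k}^\an)$, while, because Leopoldt's Conjecture is assumed, $\Gamma^-=\Gal(\widetilde{k}/k^\cy_\infty)$. Since $\widetilde{k}$ contains every $\Z_p$-extension of $k$, a $\Z_p$-extension $k_\infty/k$ corresponds to the closed subgroup $H\coloneqq\Gal(\widetilde{k}/k_\infty)$ with $\Gamma/H\simeq\Z_p$, and an anti-cyclotomic $\Z_p$-extension corresponds to a subgroup $K\subseteq\Gamma^-$ with $\Gamma^-/K\simeq\Z_p$, under the identification $\Gal(\widetilde{k}/k^\an_\infty)=\Gamma^+\oplus K$. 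Throughout I would freely use that, for closed subgroups of the abelian pro-$p$ group $\Gamma\simeq\Z_p^{r_2+1}$, a compositum of fields corresponds to the intersection of subgroups and an intersection of fields to the (automatically closed) sum of subgroups.

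For existence, I would set $K^\an\coloneqq\Gamma^-\cap H$ and let $k^\an_\infty$ be the associated anti-cyclotomic extension. A direct computation gives $\Gal(\widetilde{k}/k^\cy_\infty k^\an_\infty)=\Gamma^-\cap(\Gamma^+\oplus K^\an)=K^\an$, so the containment $k_\infty\subseteq k^\cy_\infty k^\an_\infty$ is equivalent to $K^\an\subseteq H$, which holds by construction. The point that needs care---and which I expect to be the main obstacle---is checking that $K^\an$ really defines an anti-cyclotomic $\Z_p$-extension, i.e. that $\Gamma^-/K^\an\simeq\Z_p$. For this I would consider the composite $\Gamma^-\hookrightarrow\Gamma\twoheadrightarrow\Gamma/H\simeq\Z_p$, whose kernel is exactly $K^\an$; since $\Gamma^-$ is compact its image is a closed subgroup of $\Z_p$, and a continuous injection out of a compact group is a homeomorphism onto its image, so $\Gamma^-/K^\an$ is isomorphic to that image. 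This image is nonzero precisely because $k_\infty\neq k^\cy_\infty$: if it vanished we would have $\Gamma^-\subseteq H$, hence $k_\infty\subseteq(\widetilde{k})^{\Gamma^-}=k^\cy_\infty$ and then $k_\infty=k^\cy_\infty$. Thus the hypothesis $k_\infty\neq k^\cy_\infty$ is exactly what forces $\Gamma^-/K^\an\simeq\Z_p$.

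For uniqueness, suppose an anti-cyclotomic $\Z_p$-extension $k'_\infty$ with subgroup $K'\subseteq\Gamma^-$ also satisfies $k_\infty\subseteq k^\cy_\infty k'_\infty$. The same computation shows this means $K'\subseteq H$, so $K'\subseteq\Gamma^-\cap H=K^\an$. Both quotients $\Gamma^-/K'$ and $\Gamma^-/K^\an$ are isomorphic to $\Z_p$, and the inclusion induces a surjection $\Z_p\simeq\Gamma^-/K'\twoheadrightarrow\Gamma^-/K^\an\simeq\Z_p$; since a surjective endomorphism of $\Z_p$ is an isomorphism, its kernel $K^\an/K'$ is trivial, giving $K'=K^\an$ and hence $k'_\infty=k^\an_\infty$.

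Finally, for the equality $k_\infty\cap k^\an_\infty=k_\infty\cap\widetilde{k}^\an$, I would pass once more to subgroups: the left-hand field corresponds to $H+(\Gamma^+\oplus K^\an)$ and the right-hand field to $H+\Gamma^+$. Because $K^\an=\Gamma^-\cap H\subseteq H$, the extra summand $K^\an$ is absorbed, so the two sums coincide and the fields are equal. Overall the only delicate points are the compactness/torsion-freeness argument identifying $\Gamma^-/K^\an$ with $\Z_p$ and the careful bookkeeping of which subgroup each compositum corresponds to; everything else is formal Galois theory inside $\Gamma\simeq\Z_p^{r_2+1}$.
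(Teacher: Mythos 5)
Your proposal is correct, and it is essentially the paper's own argument rendered in subgroup language: your $k^\an_\infty=\widetilde{k}^{\Gamma^+\oplus K^\an}$ with $K^\an\coloneqq\Gamma^-\cap H$ is precisely the paper's construction $k^\an_\infty\coloneqq K^{\Gal(K/k)^+}$ for $K=k^\cy_\infty k_\infty$, since $\Gal(\widetilde{k}/K)=\Gamma^-\cap H$ and the preimage of $\Gal(K/k)^+$ in $\Gamma$ is $\Gamma^+\oplus K^\an$. The only difference is presentational: the paper gets the $J$-action on $\Gal(K/k)$ via Lemma~\ref{Galois over real} and leaves the $\Z_p$-quotient verification, uniqueness, and the identity $k_\infty\cap k^\an_\infty=k_\infty\cap\widetilde{k}^\an$ ``by construction,'' whereas you spell these steps out explicitly (and correctly).
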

\begin{proof}
Put $K\coloneqq k^\cy_\infty k_\infty$. Then $K/k$ is a $\Z_p^2$-extension that contains $k^\cy_\infty$. By Lemma~\ref{Galois over real}, the extension $K/k^+$ is a Galois extension, so $\Gal(K/k)$ is a $\Z_p\br{\Gal(k/k^+)}$-module. Now, $k^\an_\infty\coloneqq K^{\Gal(K/k)^+}$ is the desired anti-cyclotomic $\Z_p$-extension of $k$. We can also see $k_\infty\cap k^\an_\infty=k_\infty\cap\widetilde{k}^\an$ by the construction of $k^\an_\infty$. The uniqueness of $k^\an_\infty$ also follows from the construction. 
\end{proof}
\subsection{Lemmas on $\Sigma$-ramified Iwasawa modules over CM-fields}\label{Subsection Lemmas on Sigma-ramified Iwasawa modules over CM-fields}
Let $k$ be a CM-field and $p$ an odd prime number. For a given set $\Sigma$ of $p$-adic primes of $k$, we define $\overline{\Sigma}\coloneqq\{\overline{\pe}\mid \pe\in\Sigma\}$, where $\overline{\pe}$ is the complex conjugate of $\pe$. 
\begin{lem}\label{Xp0}
Let $F$ be a totally real number field and $p$ an odd prime number that splits completely in $F$. Suppose that $X(F^\cy_\infty)=0$. Then $X_p(F^\cy_\infty)=0$. 
\end{lem}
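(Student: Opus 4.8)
The plan is to show that the $\Gamma$-coinvariants of $X_p(F^\cy_\infty)$ vanish, where $\Gamma\coloneqq\Gal(F^\cy_\infty/F)$, and then to conclude by Nakayama's lemma. Since $X_p(F^\cy_\infty)$ is a finitely generated module over the local ring $\Lambda\coloneqq\Z_p\bbr{\Gamma}$, once we know that $X_p(F^\cy_\infty)_\Gamma=X_p(F^\cy_\infty)/(\gamma-1)X_p(F^\cy_\infty)=0$ for a topological generator $\gamma$ of $\Gamma$, Nakayama's lemma immediately forces $X_p(F^\cy_\infty)=0$. So everything reduces to computing these coinvariants.

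First I would apply Lemma~\ref{central} with $k=F$, $K=F^\cy_\infty$ and $\Sigma=S_p(F)$, which identifies $X_p(F^\cy_\infty)_\Gamma$ with $\Gal(M_p(F^\cy_\infty)\cap F^\ab/F^\cy_\infty)$. The key point is then to understand the field $M_p(F^\cy_\infty)\cap F^\ab$, and I claim it is contained in $M_p(F)$. Indeed, it is abelian over $F$ by definition; it is pro-$p$ over $F$ because $\Gal(M_p(F^\cy_\infty)/F)$ is an extension of the pro-$p$ group $\Gamma$ by the pro-$p$ group $X_p(F^\cy_\infty)$, hence pro-$p$; and it is unramified outside $p$ over $F$, since $F^\cy_\infty/F$ is unramified outside $p$ and $M_p(F^\cy_\infty)/F^\cy_\infty$ is unramified outside $p$ by definition. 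By maximality of $M_p(F)$ we conclude $M_p(F^\cy_\infty)\cap F^\ab\subseteq M_p(F)$.

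It then remains to identify $M_p(F)$. Because $p$ splits completely in $F$, the cyclotomic $\Z_p$-extension $F^\cy_\infty/F$ is ramified at each $p$-adic prime, so Lemma~\ref{ur} applies (using that $p$ is odd) and shows that $M_p(F)/F^\cy_\infty$ is unramified. Thus $M_p(F)$ is an unramified abelian pro-$p$-extension of $F^\cy_\infty$, that is, $M_p(F)\subseteq L(F^\cy_\infty)$. The hypothesis $X(F^\cy_\infty)=0$ means $L(F^\cy_\infty)=F^\cy_\infty$, so $M_p(F)\subseteq F^\cy_\infty$; the reverse inclusion is clear, whence $M_p(F)=F^\cy_\infty$. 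Combining this with the previous paragraph gives $M_p(F^\cy_\infty)\cap F^\ab=F^\cy_\infty$, so $X_p(F^\cy_\infty)_\Gamma=0$ and the argument closes.

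The argument is short, and the only genuinely delicate point is the reduction to coinvariants: one must be careful that $M_p(F^\cy_\infty)\cap F^\ab$ really lands inside $M_p(F)$, i.e.\ that it is pro-$p$ and unramified outside $p$ as an extension of the base field $F$, since this is exactly where Lemma~\ref{ur} and the hypothesis $X(F^\cy_\infty)=0$ are allowed to interact. Once the coinvariants are shown to vanish, the passage to $X_p(F^\cy_\infty)=0$ is a routine application of Nakayama's lemma, requiring only that $X_p(F^\cy_\infty)$ be finitely generated over $\Lambda$, which is standard for $\Sigma$-ramified Iwasawa modules of $\Z_p$-extensions.
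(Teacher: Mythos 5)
Your proof is correct and follows essentially the same route as the paper's: identify $X_p(F^\cy_\infty)_{\Gal(F^\cy_\infty/F)}$ with $\Gal(M_p(F^\cy_\infty)\cap F^\ab/F^\cy_\infty)$ via Lemma~\ref{central}, show this field is $F^\cy_\infty$ using Lemma~\ref{ur} together with the hypothesis $X(F^\cy_\infty)=0$, and conclude by Nakayama's Lemma. Your write-up merely makes explicit the inclusion $M_p(F^\cy_\infty)\cap F^\ab\subseteq M_p(F)$, which the paper's proof asserts implicitly in the equality $\Gal(M_p(F^\cy_\infty)\cap F^\ab/F^\cy_\infty)=\Gal(M_p(F)/F^\cy_\infty)$.
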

\begin{proof}
Since $M_p(F)=F^\cy_\infty$ by Lemma~\ref{ur}, we have $X_p(F^\cy_\infty)_{\Gal(F^\cy_\infty/F)}=\Gal(M_p(F^\cy_\infty)\cap F^\ab/F^\cy_\infty)=\Gal(M_p(F)/F^\cy_\infty)=0$ by Lemma~\ref{central}. Thus we have $X_p(F^\cy_\infty)=0$ by Nakayama's Lemma. 
\end{proof}
The following lemma is a generalization of \cite[Lemma~2]{Fujii2014}, which is proved in the same way. 
\begin{lem}\label{M_Sigma L}
Let $k$ be a CM-field, $p$ an odd prime number that splits completely in $k$ and $\Sigma$ a set of $p$-adic primes of $k$ that satisfies $\Sigma\cap\overline{\Sigma}=\emptyset$. Suppose that $X(k^\cy_\infty)^+=0$. Then $M_\Sigma(k^\cy_\infty)=L(k^\cy_\infty)$. 
\end{lem}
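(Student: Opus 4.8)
The plan is to prove the nontrivial inclusion. Since every unramified extension is in particular unramified outside $\Sigma$, we always have $L(k^\cy_\infty)\subseteq M_\Sigma(k^\cy_\infty)$, so it suffices to show that $M_\Sigma(k^\cy_\infty)/k^\cy_\infty$ is unramified. Because $p$ splits completely in $k$ and the cyclotomic $\Z_p$-extension is totally ramified at $p$, each $\pe\in S_p(k)$ has a unique prime above it in $k^\cy_\infty$, so the associated inertia subgroups of any abelian pro-$p$-extension of $k^\cy_\infty$ are well defined. To bring complex conjugation into play I would pass to the conjugation-stable set $T\coloneqq\Sigma\cup\overline{\Sigma}\subseteq S_p(k)$: since $T=\overline{T}$ and $k^\cy_\infty/k^+$ is abelian (hence Galois), the field $M_T(k^\cy_\infty)$ is Galois over $k^+$, so $J$ acts on $Y\coloneqq X_T(k^\cy_\infty)=\Gal(M_T(k^\cy_\infty)/k^\cy_\infty)$ and, as $p$ is odd, we may write $Y=Y^+\oplus Y^-$.

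The crux is to prove $Y^+=0$. Since $T\subseteq S_p(k)$ we have $M_T(k^\cy_\infty)\subseteq M_p(k^\cy_\infty)$, so $Y$ is a quotient of $X_p(k^\cy_\infty)$ and hence $Y^+$ is a quotient of $X_p(k^\cy_\infty)^+$. I would then descend to the maximal totally real subfield $k^+$. As $p$ is odd and $[k:k^+]=2$ is prime to $p$, the usual norm argument (using that each $p$-adic prime splits in $k/k^+$, so the ramification data match) identifies the plus parts with the corresponding modules over $(k^+)^\cy_\infty$; in particular $X(k^\cy_\infty)^+\cong X((k^+)^\cy_\infty)$ and $X_p(k^\cy_\infty)^+\cong X_p((k^+)^\cy_\infty)$. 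The hypothesis $X(k^\cy_\infty)^+=0$ then gives $X((k^+)^\cy_\infty)=0$, and since $p$ splits completely in the totally real field $k^+$, Lemma~\ref{Xp0} applies and yields $X_p((k^+)^\cy_\infty)=0$. Combining these, $X_p(k^\cy_\infty)^+=0$, and therefore $Y^+=0$.

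It remains to conclude. Once $Y^+=0$, the involution $J$ acts on $Y$ as $-1$. For $\pe\in\Sigma$, complex conjugation carries the prime above $\pe$ to the prime above $\overline{\pe}$, so the inertia subgroups $I_\pe,I_{\overline\pe}\subseteq Y$ satisfy $I_{\overline{\pe}}=J\,I_{\pe}=-I_{\pe}=I_{\pe}$, the last equality holding because $I_\pe$ is a subgroup. Now $M_\Sigma(k^\cy_\infty)$ is the maximal subextension of $M_T(k^\cy_\infty)$ unramified at the primes above $\overline{\Sigma}=T\setminus\Sigma$, that is, $X_\Sigma(k^\cy_\infty)=Y/\langle I_{\overline{\pe}}:\pe\in\Sigma\rangle$. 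Since $\langle I_{\overline{\pe}}:\pe\in\Sigma\rangle=\langle I_{\pe}:\pe\in\Sigma\rangle$ already contains every inertia subgroup attached to $T$, this quotient is exactly $\Gal(L(k^\cy_\infty)/k^\cy_\infty)=X(k^\cy_\infty)$, whence $M_\Sigma(k^\cy_\infty)=L(k^\cy_\infty)$, as desired.

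The main obstacle is the middle step: justifying the descent of the plus parts to $k^+$ so that Lemma~\ref{Xp0} can be invoked. One must verify carefully that, for $p$ odd, the isomorphisms $X(k^\cy_\infty)^+\cong X((k^+)^\cy_\infty)$ and $X_p(k^\cy_\infty)^+\cong X_p((k^+)^\cy_\infty)$ hold, using that every $p$-adic prime splits in $k/k^+$ and that $J$ has order prime to $p$, so that formation of the $+$-part is exact; this is the same mechanism used in the cited \cite[Lemma~2]{Fujii2014}. Everything after $Y^+=0$ is formal, resting only on the facts that $J$ then acts as $-1$ and that $\Sigma\cap\overline{\Sigma}=\emptyset$ forces $\pe\neq\overline{\pe}$ for $\pe\in\Sigma$.
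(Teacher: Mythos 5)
Your proposal is correct and takes essentially the same route as the paper: both arguments reduce to showing that complex conjugation acts as $-1$ on the $p$-ramified Iwasawa module, by descending the plus part to $k^+$ (so that the hypothesis $X(k^\cy_\infty)^+=0$ gives $X((k^+)^\cy_\infty)=0$, and Lemma~\ref{Xp0} gives $X_p((k^+)^\cy_\infty)=0$, hence $X_p(k^\cy_\infty)^+=0$ --- a descent the paper leaves implicit and you rightly flag and justify), and then exploit that conjugation by $J$ swaps ramification at $\Sigma$ and $\overline{\Sigma}$, which forces everything to be unramified since $\Sigma\cap\overline{\Sigma}=\emptyset$. Your bookkeeping via inertia subgroups of $X_{\Sigma\cup\overline{\Sigma}}(k^\cy_\infty)$ is just a group-theoretic rephrasing of the paper's direct observation that $M_\Sigma(k^\cy_\infty)/(k^+)^\cy_\infty$ is Galois (because $J=-1$ stabilizes every closed subgroup of $X_p(k^\cy_\infty)$) and unramified outside both $\Sigma$ and $\overline{\Sigma}$.
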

\begin{proof}
Put $F\coloneqq k^+$ and consider the natural action of $\Gal(k^\cy_\infty/F^\cy_\infty)$ on $X_p(k^\cy_\infty)$. Then, by Lemma~\ref{Xp0}, the generator of $\Gal(k^\cy_\infty/F^\cy_\infty)$ acts on $X_p(k^\cy_\infty)$ as inverse. Thus each intermediate field of $M_p(k^\cy_\infty)/k^\cy_\infty$ is a Galois extension of $F^\cy_\infty$. In particular, $M_\Sigma(k^\cy_\infty)/F^\cy_\infty$ is a Galois extension. Since each $p$-adic prime of $F^\cy_\infty$ splits in $k^\cy_\infty$, the extension $M_\Sigma(k^\cy_\infty)/F^\cy_\infty$ is unramified outside $\Sigma$. Considering the action of $\Gal(k^\cy_\infty/F^\cy_\infty)$, we conclude that $M_\Sigma(k^\cy_\infty)/F^\cy_\infty$ is unramified outside $\overline{\Sigma}$, hence it is unramified everywhere by the assumption that $\Sigma\cap\overline{\Sigma}=\emptyset$.  This means $M_\Sigma(k^\cy_\infty)\subseteq L(k^\cy_\infty)$. The reverse inclusion is clear. 
\end{proof}
\section{Strategy}\label{Section Strategy}
In this section, we review the logical structure of Fujii's proof of Theorem~\ref{FujiiThm1} from our perspective. Here we mention that Fujii's method is based on Bloom's method in \cite[\S 5]{Bloom1979}. 
\subsection{A lemma on power series}\label{Subsection Lemmas on power series}
Let $p$ be an odd prime number and $f(T)\in\Z_p\bbr{T}$ a power series. When $f(T)\neq 0$, by the $p$-adic Weierstrass preparation theorem, we can uniquely write $f(T)=p^mg(T)U(T)$ with a non-negative integer $m$, a distinguished polynomial $g(T)\in\Z_p\br{T}$ and a unit power series $U(T)\in\Z_p\bbr{T}^{\times}$. We put $\mu(f(T))\coloneqq m$ and $\lambda(f(T))\coloneqq\deg g(T)$. When $f(T)=0$, we define $\mu(0)\coloneqq\infty$. 

For $\alpha\in\Z_p$, we define an ideal $I_{\alpha}$ of $\Z_p\bbr{S, T}$ by
\[
I_{\alpha}\coloneqq((1+S)^{\alpha}(1+T)-1, S-f(T), p). 
\]
Now our goal is to show Lemma~\ref{powerseries dimension} below. The following result is given by Fujii (see \cite[Lemma~3.3]{Fujii2013}). Here we give a slightly different calculation. 
\begin{lem}\label{powerseries dimension}
For each $\alpha\in\Z_p$, one of the following holds. 
\begin{enumerate}
\item $\dim_{\F_p}\Z_p\bbr{S, T}/I_{\alpha}\leq 1$. 

\item $\dim_{\F_p}\Z_p\bbr{S, T}/I_{-\alpha}\leq 1$. 
\end{enumerate}
\end{lem}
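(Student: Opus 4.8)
The plan is to reduce everything modulo $p$ and then to a single variable. Since $p\in I_\alpha$, I would first record the identification
\[
\Z_p\bbr{S,T}/I_\alpha\;\cong\;\F_p\bbr{S,T}/\bigl((1+S)^\alpha(1+T)-1,\;S-\overline{f}(T)\bigr),
\]
where $\overline{f}(T)\in\F_p\bbr{T}$ is the reduction of $f$ modulo $p$ and $(1+S)^\alpha=\sum_{n\ge 0}\binom{\alpha}{n}S^n$ is the binomial series, which lies in $\Z_p\bbr{S}$ because $\binom{\alpha}{n}\in\Z_p$ for $\alpha\in\Z_p$. I would then split into two cases according to the constant term $\overline{f}(0)$. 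If $\overline{f}(0)\neq 0$, then $S-\overline{f}(T)$ has nonzero constant term, hence is a unit of $\F_p\bbr{S,T}$, so the quotient is $0$ and $\dim_{\F_p}\leq 1$ holds simultaneously for both $I_\alpha$ and $I_{-\alpha}$. Thus I may assume $\overline{f}(0)=0$, i.e.\ $\overline{f}(T)\in(T)$.

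In that case the substitution $S\mapsto\overline{f}(T)$ gives an isomorphism $\F_p\bbr{S,T}/(S-\overline{f}(T))\cong\F_p\bbr{T}$, under which the remaining relation becomes
\[
P_\alpha(T)\coloneqq(1+\overline{f}(T))^\alpha(1+T)-1\in\F_p\bbr{T},
\]
the power $(1+\overline{f}(T))^\alpha$ being well defined because $1+\overline{f}(T)$ is a $1$-unit and the group of $1$-units of $\F_p\bbr{T}$ is a pro-$p$-group, hence a $\Z_p$-module. So $\Z_p\bbr{S,T}/I_\alpha\cong\F_p\bbr{T}/(P_\alpha(T))$. Next I would use that $\F_p\bbr{T}$ is a discrete valuation ring: for $0\neq P\in\F_p\bbr{T}$ one has $\dim_{\F_p}\F_p\bbr{T}/(P)=\mathrm{ord}_T P$, while the quotient is infinite-dimensional if $P=0$. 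The claim therefore reduces to showing that at least one of $P_\alpha$, $P_{-\alpha}$ has $T$-order exactly $1$.

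Writing $\overline{f}(T)=c_1T+c_2T^2+\cdots$ and using $\binom{\alpha}{1}=\alpha$, I find $(1+\overline{f}(T))^\alpha=1+\overline{\alpha}c_1T+O(T^2)$ with $\overline{\alpha}\coloneqq\alpha\bmod p$, so that
\[
P_\alpha(T)=(1+\overline{\alpha}c_1)T+O(T^2),\qquad P_{-\alpha}(T)=(1-\overline{\alpha}c_1)T+O(T^2).
\]
In particular both have vanishing constant term. The crux is then the observation that the two linear coefficients are $1+\overline{\alpha}c_1$ and $1-\overline{\alpha}c_1$, whose sum is $2$; since $p$ is odd, $2\neq 0$ in $\F_p$, so they cannot both vanish. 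Hence at least one of $P_\alpha$, $P_{-\alpha}$ is nonzero with $\mathrm{ord}_T=1$, which yields the desired bound $\dim_{\F_p}\leq 1$ for the corresponding ideal (this argument is robust even if the other of $P_\alpha$, $P_{-\alpha}$ happens to vanish identically).

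I expect the only delicate points to be the bookkeeping of the reduction to one variable — in particular checking that $(1+S)^\alpha\mapsto(1+\overline{f}(T))^\alpha$ really gives the $\Z_p$-power of a $1$-unit — and the justification of $\dim_{\F_p}\F_p\bbr{T}/(P)=\mathrm{ord}_T P$; once these are in place, the inequality falls out of the single fact that $2$ is invertible modulo an odd prime.
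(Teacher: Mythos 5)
Your proof is correct, and while it shares the paper's skeleton --- kill $p$, eliminate $S$ via the substitution $S\mapsto \overline{f}(T)$ to land in a one-variable quotient, then exploit that two quantities summing to $2$ cannot both vanish modulo an odd prime --- your execution is genuinely more uniform than the paper's. The paper applies the $p$-adic Weierstrass preparation theorem and runs a case analysis on $\mu(f)$, $\lambda(f)$ and $u=v_p(\alpha)$ (using the Frobenius congruence $(1+f)^{p^u}\equiv 1+f^{p^u}\bmod p$ to dispose of $u\geq 1$), reaching the $\pm 2$ trick only in the residual case $\mu(f)=0$, $\lambda(f)=1$, $u=0$. You observe instead that, since $P_{\pm\alpha}$ has zero constant term, only the \emph{linear} coefficient matters, and the single identity $(1+\overline{\alpha}c_1)+(1-\overline{\alpha}c_1)=2$ absorbs all of the paper's cases at once: $c_1=0$ simultaneously covers $\mu(f)>0$, $\lambda(f)\geq 2$, and $p\mid\alpha$, while your case $\overline{f}(0)\neq 0$ is exactly the paper's case $\mu(f)=\lambda(f)=0$, where $S-f(T)$ is a unit. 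What the paper's route buys is the precise value $\dim_{\F_p}=\lambda(h_\alpha(T))=1$ in each case where it applies; what yours buys is the elimination of preparation and case-splitting, plus a cleaner justification of convergence (your insistence on $\overline{f}(0)=0$ before substituting is precisely what makes the substitution and the power $(1+\overline{f})^{\alpha}$ well defined, a point the paper leaves implicit by treating the unit case of $f$ first). One incidental remark: your relation $P_\alpha=(1+\overline{f})^{\alpha}(1+T)-1$ equals $-(1+\overline{f})^{\alpha}\,h_{-\alpha}(T)$, i.e.\ a unit multiple of the paper's $h_{-\alpha}$, not of $h_\alpha$; the paper's displayed identity $I_\alpha=(h_\alpha(T),S-f(T),p)$ thus carries a sign slip, harmless because the lemma's conclusion is symmetric under $\alpha\mapsto-\alpha$, and your direct substitution is the sign-correct version. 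The two auxiliary facts you flag as delicate --- that $(1+\overline{f})^{\alpha}$ is the $\Z_p$-power of a $1$-unit in $\F_p\bbr{T}$, and that $\dim_{\F_p}\F_p\bbr{T}/(P)=\mathrm{ord}_T P$ for $P\neq 0$ since $\F_p\bbr{T}$ is a discrete valuation ring --- are both standard and correctly invoked, so there is no gap.
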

\begin{proof}
Put
\[
h_{\alpha}(T)\coloneqq (1+f(T))^{\alpha}-(1+T),
\] 
so that we have $I_{\alpha}=(h_{\alpha}(T), S-f(T), p)$. Then we obtain an isomorphism
\[
\Z_p\bbr{S, T}/I_{\alpha}\simeq \Z_p\bbr{T}/(h_{\alpha}(T), p), 
\]
defined by sending $S$ to $f(T)$. So, once we have shown $\mu(h_{\alpha}(T))= 0$, we obtain 
\[
\dim_{\F_p}\Z_p\bbr{S, T}/I_{\alpha}=\lambda(h_{\alpha}(T)), 
\]
thus it suffices to calculate $\lambda(h_{\alpha}(T))$. 

We first deal with the case where $\mu(f(T))>0$. Then we have
\[
h_{\alpha}(T)\equiv1-(1+T)=-T \pmod{p}, 
\]
and thus we have $\lambda(h_{\alpha}(T))=1$. 

In what follows, we assume that $\mu(f(T))=0$. If we further assume that $\lambda(f(T))=0$, we have $f(T)=U(T)\in\Z_p\bbr{T}^\times$, and hence $S-f(T)\in\Z_p\bbr{S, T}^\times$. This implies $\Z_p\bbr{S, T}/I_{\alpha}=0$. Therefore, we may also assume that $\lambda(f(T))\geq 1$. 

Put $u\coloneqq v_p(\alpha)$ and $\alpha'\coloneqq \alpha p^{-u}\in\Z_p^\times$, in the case where $\alpha\neq 0$. If $\alpha=0$, then we put $u\coloneqq1$ and $\alpha'\coloneqq0$. In any case, we have
\begin{align*}
h_{\alpha}(T)&\equiv(1+f(T)^{p^{u}})^{\alpha'}-(1+T)=\sum_{i=1}^\infty\binom{\alpha'}{i}f(T)^{p^{u}i}-T\\
&=T^{\lambda(f(T))p^{u}}\sum_{i=1}^\infty\binom{\alpha'}{i}T^{\lambda(f(T))p^{u}(i-1)}U(T)^{p^{u}i}-T \pmod{p}. 
\end{align*}

If $u\geq 1$, then
\[
h_{\alpha}(T)T^{-1}\equiv T^{\lambda(f(T))p^{u}-1}\sum_{i=1}^\infty\binom{\alpha'}{i}T^{\lambda(f(T))p^{u}(i-1)}U(T)^{p^{u}i}-1 \pmod{p}, 
\] 
and the right hand side is in $\Z_p\bbr{T}^\times$. Therefore, it holds that $\lambda(h_{\alpha, \beta}(T))=1$. 

Now suppose that $u=0$. If $\lambda(f(T))>1$, then
\[
h_{\alpha}(T)T^{-1}\equiv T^{\lambda(f(T))-1}\sum_{i=1}^\infty\binom{\alpha'}{i}T^{\lambda(f(T))(i-1)}U(T)^{i}-1 \pmod{p}, 
\]
and the right hand side is in $\Z_p\bbr{T}^\times$, so it holds that $\lambda(h_{\alpha, \beta}(T))=1$. 

If $\lambda(f(T))=1$, then
\[
h_{\alpha}(T)T^{-1}\equiv\sum_{i=1}^\infty\binom{\alpha'}{i}T^{\lambda(f(T))(i-1)}U(T)^{i}-1 \pmod{p}, 
\]
and thus
\[
\left[ h_{\alpha}(T)T^{-1} \right]_{T=0}\equiv\alpha'U(0)-1 \pmod{p}. 
\]
If $\alpha'U(0)-1\not\equiv 0\pmod{p}$ holds, then $h_{\alpha}(T)T^{-1}\in\Z_p\bbr{T}^\times$, so it holds that $\lambda(h_{\alpha}(T))=1$. If, to the contrary, $\alpha'U(0)-1\equiv 0\pmod{p}$ holds, then we have
\[
-\alpha'U(0)-1\equiv-\alpha'U(0)-1+(\alpha'U(0)-1)=-2\not\equiv 0\pmod{p}, 
\]
so $h_{-\alpha}(T)T^{-1}\in\Z_p\bbr{T}^\times$. Therefore, it holds that $\lambda(h_{-\alpha}(T))=1$. This completes the proof of Lemma~\ref{powerseries dimension}. 
\end{proof}
\subsection{Bounding $p$-ranks of certain Galois-coinvariant modules}\label{Subsection Bounding p-ranks of certain Galois-coinvariant modules}
\begin{thm}\label{First Theorem}
Let $k$ be a CM-field and $p$ an odd prime number that splits completely in $k$. Let $k^\an_\infty/k$ be an anti-cyclotomic $\Z_p$-extension. Put $K\coloneqq k^\cy_\infty k^\an_\infty$ and let $X$ be a $\Z_p\bbr{\Gal(K/k)}$-module that is cyclic as a $\Z_p\bbr{\Gal(K/k^\cy_\infty)}$-module. Let $k_\infty/k$ be a $\Z_p$-extension that satisfies $k_\infty\subseteq K$ and $k_\infty\cap k^\an_\infty=k$. Then one of the following holds.  
\begin{enumerate}
\item $\rank_p X_{\Gal(K/k_\infty)}\leq1$. 

\item $\rank_p X_{\Gal(K/k'_\infty)}\leq1$, where $k'_\infty$ is the complex conjugation of $k_\infty$. 
\end{enumerate}
\end{thm}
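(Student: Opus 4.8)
The plan is to translate the statement into a computation in a two-variable power series ring and then feed the result into Lemma~\ref{powerseries dimension}. First I would record the relevant group theory. Both $k^\cy_\infty$ and $k^\an_\infty$ are Galois over $k^+$, so $K=k^\cy_\infty k^\an_\infty$ is Galois over $k^+$ and $J$ acts on $\Gal(K/k)$. Since $J$ acts as $-1$ on $\Gal(k^\an_\infty/k)$ but trivially on $\Gal(k^\cy_\infty/k)$, and $p$ is odd, we get $k^\cy_\infty\cap k^\an_\infty=k$, whence $K/k$ is a $\Z_p^2$-extension. Decomposing $\Gal(K/k)=\Gal(K/k)^+\oplus\Gal(K/k)^-$ under $J$, the ($J$-equivariant) restriction isomorphisms identify $\Gal(K/k^\an_\infty)=\Gal(K/k)^+$ and $\Gal(K/k^\cy_\infty)=\Gal(K/k)^-$. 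I would then fix a generator $\sigma$ of $\Gal(K/k^\an_\infty)$ (so $J\sigma J^{-1}=\sigma$) and a generator $\tau$ of $\Gal(K/k^\cy_\infty)$ (so $J\tau J^{-1}=\tau^{-1}$), and identify $\Z_p\bbr{\Gal(K/k)}\simeq\Z_p\bbr{S,T}$ via $\sigma\leftrightarrow 1+S$, $\tau\leftrightarrow 1+T$.

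Next I would parametrize $k_\infty$ and its conjugate. Writing a generator of the rank-one direct summand $\Gal(K/k_\infty)$ as $\sigma^a\tau^b$, the hypothesis $k_\infty\cap k^\an_\infty=k$ amounts to $\langle\sigma,\sigma^a\tau^b\rangle=\Gal(K/k)$, which forces $b\in\Z_p^\times$; after rescaling the generator I may assume $\Gal(K/k_\infty)=\tgen{\sigma^\alpha\tau}$ for a single parameter $\alpha\in\Z_p$. Conjugating by $J$ then gives $\Gal(K/k'_\infty)=J\,\Gal(K/k_\infty)\,J^{-1}=\tgen{\sigma^\alpha\tau^{-1}}$. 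Thus the two coinvariant modules in the conclusion are computed by killing $(1+S)^\alpha(1+T)-1$ and $(1+S)^\alpha(1+T)^{-1}-1$, respectively.

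Now I would exploit the cyclicity hypothesis. Since $X$ is cyclic over $\Z_p\bbr{\Gal(K/k^\cy_\infty)}=\Z_p\bbr{\tau}$, fix a generator $x$; because $\sigma x$ again lies in $\Z_p\bbr{\tau}x$, I can write $(\sigma-1)x=f(T)x$ for some $f(T)\in\Z_p\bbr{T}$, so that $S-f(T)\in\Ann(x)$. Consequently the surjection $\Z_p\bbr{S,T}\twoheadrightarrow X$, $1\mapsto x$, induces for each $\beta\in\Z_p$ a surjection
\[
\Z_p\bbr{S,T}/I_\beta=\Z_p\bbr{S,T}/\big(S-f(T),\,(1+S)^\beta(1+T)-1,\,p\big)\twoheadrightarrow X/\big(((1+S)^\beta(1+T)-1)X+pX\big).
\]
Taking $\beta=\alpha$ bounds $\rank_p X_{\Gal(K/k_\infty)}\le\dim_{\F_p}\Z_p\bbr{S,T}/I_\alpha$. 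For $k'_\infty$ I would first note that $(1+S)^\alpha(1+T)^{-1}-1$ and $(1+S)^{-\alpha}(1+T)-1$ are the unit multiples $(1+T)^{-1}$ and $-(1+S)^{-\alpha}$ of $(1+S)^\alpha-(1+T)$, hence generate the same ideal; the same argument with $\beta=-\alpha$ then gives $\rank_p X_{\Gal(K/k'_\infty)}\le\dim_{\F_p}\Z_p\bbr{S,T}/I_{-\alpha}$. Applying Lemma~\ref{powerseries dimension} to $\alpha$ shows at least one of these two dimensions is $\le 1$, which is exactly the dichotomy in the conclusion.

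The bulk of the analytic difficulty is already absorbed into Lemma~\ref{powerseries dimension}, so the main obstacle here is the bookkeeping that produces the clean pair $I_\alpha, I_{-\alpha}$. I expect the delicate points to be (i) justifying the reduction $\Gal(K/k_\infty)=\tgen{\sigma^\alpha\tau}$ from $k_\infty\cap k^\an_\infty=k$, together with the compatibility of the $J$-action so that $k'_\infty$ really corresponds to $\sigma^\alpha\tau^{-1}$, and (ii) checking that the relations attached to $k_\infty$ and $k'_\infty$ match the generators of $I_\alpha$ and $I_{-\alpha}$ up to units, so that Lemma~\ref{powerseries dimension} applies verbatim.
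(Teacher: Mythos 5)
Your proposal is correct and takes essentially the same route as the paper's own proof: the same identification $\Z_p\bbr{\Gal(K/k)}\simeq\Z_p\bbr{S,T}$ via generators $\sigma$ of $\Gal(K/k^\an_\infty)$ and $\tau$ of $\Gal(K/k^\cy_\infty)$, the same annihilator $S-f(T)$ extracted from cyclicity over $\Z_p\bbr{T}$, the parametrization $\Gal(K/k_\infty)=\tgen{\sigma^\alpha\tau}$ with $k'_\infty$ corresponding to $\sigma^\alpha\tau^{-1}$ (equivalently $\sigma^{-\alpha}\tau$), and the concluding appeal to Lemma~\ref{powerseries dimension} through the bounds $\rank_pX_{\Gal(K/k_\infty)}\leq\dim_{\F_p}\Z_p\bbr{S,T}/I_\alpha$ and $\rank_pX_{\Gal(K/k'_\infty)}\leq\dim_{\F_p}\Z_p\bbr{S,T}/I_{-\alpha}$. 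If anything, you supply bookkeeping the paper leaves implicit (why $b\in\Z_p^\times$, the $J$-equivariance of the generator choices, and the unit-multiple identification of the two relation ideals), all of which is accurate.
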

\begin{proof}
Take topological generators $\sigma$ of $\Gal(K/k^\an_\infty)$ and $\tau$ of $\Gal(K/k^\cy_\infty)$, and fix an isomorphism $\Z_p\bbr{\Gal(K/k)}\simeq\Z_p\bbr{S, T}$ defined by $\sigma\leftrightarrow 1+S$, $\tau\leftrightarrow1+T$. Put $\Lambda\coloneqq\Z_p\bbr{S, T}$ and we consider $X$ as a $\Lambda$-module. Then we have a power series $f(T)\in\Z_p\bbr{T}$ such that $S-f(T)\in\Ann_\Lambda X$, as follows (see \cite[p.249]{Bloom1979} and \cite[Lemma~3.2]{Fujii2013}, as well). Here, $\Ann_\Lambda X\coloneqq\{r\in\Lambda\mid rX=0\}$ is the annihilator ideal of $X$. Since we are assuming that $X$ is cyclic as a $\Z_p\bbr{\Gal(K/k^\cy_\infty)}$-module, we have an element $x\in X$ such that $X=\Z_p\bbr{T}x$. Here we note that the fixed  isomorphism $\Z_p\bbr{\Gal(K/k)}\simeq\Z_p\bbr{S, T}$ induces an isomorphism $\Z_p\bbr{\Gal(K/k^\cy_\infty)}\simeq\Z_p\bbr{T}$. Thus we obtain a power series $f(T)\in\Z_p\bbr{T}$ such that $Sx=f(T)x$. Then $(S-f(T))x=0$, so $S-f(T)\in\Ann_\Lambda X$. Such a power series $f(T)$ is not unique in general, but we fix one here. 

Write $k_\infty=K^{\tgen{\sigma^\alpha\tau}}$ with $\alpha\in\Z_p$. Here we remark that $k'_\infty=K^{\tgen{\sigma^{\alpha}\tau^{-1}}}=K^{\tgen{\sigma^{-\alpha}\tau}}$. By the definition of $f(T)$, we have a surjective morphism $\Lambda/(S-f(T))\to X$. Then we also have a surjective morphism 
\[
\Lambda/((1+S)^{\alpha}(1+T)-1, S-f(T))\to X_{\Gal(K/k_\infty)},
\]
since $X_{\Gal(K/k_\infty)}=X/((1+S)^{\alpha}(1+T)-1)$. Thus we obtain
\[
\rank_pX_{\Gal(K/k_\infty)}\leq \dim_{\F_p}\Z_p\bbr{S, T}/I_{\alpha},  
\]
where $I_{\alpha}$ is defined in Section \ref{Subsection Lemmas on power series}. We also have
\[
\rank_pX_{\Gal(K/k'_\infty)}\leq \dim_{\F_p}\Z_p\bbr{S, T}/I_{-\alpha} 
\]
for the same reason. Therefore, we have the assertion by Lemma~\ref{powerseries dimension}. 
\end{proof}
\begin{rem}\label{isomorphic}
Though this theorem gives us information about one of $k_\infty$ and $k'_\infty$, we will be able to obtain information about both of $k_\infty$ and $k'_\infty$, since $X_\Sigma(k_\infty)$ and $X_{\overline{\Sigma}}(k'_\infty)$ are isomorphic as $\Z_p$-modules for a set $\Sigma$ of $p$-adic primes of $k$. 
\end{rem}
\begin{rem}\label{deduceFujii}
We can deduce Theorem~\ref{FujiiThm1} from Theorem~\ref{First Theorem}. This is a part of Fujii's proof of Theorem~\ref{FujiiThm1} and is not new, but we prefer to briefly recall the method here. Suppose the conditions as in Theorem~\ref{FujiiThm1}. Then there is an exact sequence
\[
\begin{tikzpicture}
\node (a) at (-0.5,0) {$0$}; 
\node (b) at (2,0) {$X(\widetilde{k})_{\Gal(\widetilde{k}/k^\cy_\infty)}$}; 
\node (c) at (5,0) {$X(k^\cy_\infty)$}; 
\node (d) at (8,0) {$\Gal(\widetilde{k}/k^\cy_\infty)$}; 
\node (e) at (10.5,0) {$0$};
\draw[->] (a) -- (b); \draw[->] (b) -- (c); \draw[->] (c) -- (d); \draw[->] (d) -- (e); 
\end{tikzpicture}
\]
by Lemmas \ref{ur} and \ref{fiveterm}. Since $X(k^\cy_\infty)\simeq\Z_p^2$, we have $X(\widetilde{k})_{\Gal(\widetilde{k}/k^\cy_\infty)}\simeq\Z_p$, and thus $X(\widetilde{k})$ is cyclic as a $\Z_p\bbr{\Gal(\widetilde{k}/k^\cy_\infty)}$-module by Nakayama's Lemma. Then by Lemma~\ref{prankineq}, we obtain
\[
\lambda(k_\infty/k)\leq\rank_pX(k_\infty)\leq\rank_pX(\widetilde{k})_{\Gal(\widetilde{k}/k_\infty)}+1. 
\]
Recalling that $X(k_\infty)$ and $X(k'_\infty)$ are isomorphic as $\Z_p$-modules, we can deduce Theorem~\ref{FujiiThm1} from Theorem~\ref{First Theorem} applied to $X=X(\widetilde{k})$. 

Here we remark that the assumption on the ramification in $k_\infty/k$ is not needed in Theorem~\ref{FujiiThm1}. 
\end{rem}
\section{Iwasawa modules over a quartic CM-field}\label{Section Iwasawa modules over a quartic CM-field}
\subsection{Main theorems}\label{Subsection Main theorems}
In this subsection, we prove the main theorems. Before proving Theorem~\ref{minimum rank 2}, we give some remarks. 
\begin{rem}
Leopoldt's Conjecture holds for any quartic CM-field $k$ and any prime number $p$. Indeed, the $p$-adic regulator of $k$ is simply the value of the $p$-adic logarithm at a fundamental unit of $k$, which is non-zero. 
\end{rem}
\begin{rem}\label{trivial action}
Let $k$ be a quartic CM-field, $p$ a prime number that splits completely in $k$ and $k_\infty/k$ a $\Z_p$-extension that is ramified at each $p$-adic prime. Then the following are equivalent. 
\begin{enumerate}
\item $X(k_\infty)\simeq\Z_p^2$ as a $\Z_p\bbr{\Gal(k_\infty/k)}$-module. 

\item $X(k_\infty)\simeq\Z_p^2$ as a $\Z_p$-module. 

\item $L(k_\infty)=\widetilde{k}$. 
\end{enumerate}
This is because there is a surjection $X(k_\infty)\to\Gal(\widetilde{k}/k_\infty)$ of $\Z_p\bbr{\Gal(k_\infty/k)}$-modules by Lemma~\ref{ur}. In particular, if $X(k^\cy_\infty)\simeq\Z_p^2$ (and $p$ is odd), then $X(k^\cy_\infty)^+=0$. 
\end{rem}
\begin{proof}[Proof of Theorem~\ref{minimum rank 2}]
Using Lemma~\ref{anti-cyclotomic intersection}, we can take an anti-cyclotomic $\Z_p$-extension $k^\an_\infty/k$ such that $k_\infty\subseteq k^\cy_\infty k^\an_\infty$, and this satisfies $k_\infty\cap k^\an_\infty=k$. Put $K\coloneqq k^\cy_\infty k^\an_\infty$. Then there is an exact sequence
\[
\begin{tikzpicture}
\node (a) at (-0.5,0) {$0$}; 
\node (b) at (2,0) {$X_\Sigma(K)_{\Gal(K/k^\cy_\infty)}$}; 
\node (c) at (5,0) {$X(k^\cy_\infty)$}; 
\node (d) at (8,0) {$\Gal(K/k^\cy_\infty)$}; 
\node (e) at (10.5,0) {$0$};
\draw[->] (a) -- (b); \draw[->] (b) -- (c); \draw[->] (c) -- (d); \draw[->] (d) -- (e); 
\end{tikzpicture}
\]
by Lemmas \ref{ur}, \ref{fiveterm} and \ref{M_Sigma L}. By the assumption that $X(k^\cy_\infty)\simeq\Z_p^2$, we have $X_\Sigma(K)_{\Gal(K/k^\cy_\infty)}\simeq\Z_p$, and thus $X_\Sigma(K)$ is cyclic as a $\Z_p\bbr{\Gal(K/k^\cy_\infty)}$-module by Nakayama's Lemma. Hence, by Theorem~\ref{First Theorem} applied to $X=X_\Sigma(K)$, we have one of the following two inequalities. 
\begin{itemize}
\item $\rank_pX_\Sigma(K)_{\Gal(K/k_\infty)}\leq1$. 

\item $\rank_pX_\Sigma(K)_{\Gal(K/k'_\infty)}\leq1$. 
\end{itemize}
If the first inequality $\rank_pX_\Sigma(K)_{\Gal(K/k_\infty)}\leq1$ holds, we obtain $\rank_pX_\Sigma(k_\infty)\leq 2$ by Lemma~\ref{prankineq}. On the other hand, if the second inequality $\rank_pX_\Sigma(K)_{\Gal(K/k'_\infty)}\leq1$ holds, we obtain $\rank_pX_\Sigma(k'_\infty)\leq 2$ again by Lemma~\ref{prankineq}, and thus $\rank_pX_{\overline{\Sigma}}(k_\infty)\leq 2$ since $X_\Sigma(k'_\infty)$ and $X_{\overline{\Sigma}}(k_\infty)$ are isomorphic as $\Z_p$-modules. This completes the proof of Theorem~\ref{minimum rank 2}. 
\end{proof}
As a corollary to Theorem~\ref{minimum rank 2}, we obtain a theorem that determines all Iwasawa invariants of certain $\Z_p$-extensions. To see this, we refer the following theorem of Sands \cite{Sands1991}. 
\begin{thm}[{\cite[Theorem~3.1]{Sands1991}}]\label{SandsThm3.1}
Let $k/\Q$ be a finite extension, $p$ a prime number and $k_\infty/k$ a $\Z_p$-extension that is totally ramified at all ramified primes. Suppose that $\mu(k_\infty/k)=0$ and $X(k_\infty)$ has no non-trivial finite $\Z_p\bbr{\Gal(k_\infty/k)}$-submodules. Suppose also that $\lambda(k_\infty/k)<p-1$ or that $\lambda(k_\infty/k)=p-1$ and $p^2$ divides the constant term of the characteristic power series of $X(k_\infty)$. Then $v_p(h_{k_n})=\lambda(k_\infty/k)n+v_p(h_k)$ holds for all $n\geq 0$. 
\end{thm}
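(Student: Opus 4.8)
The plan is to prove the stronger statement that $|A_n|=p^{\lambda}\,|A_{n-1}|$ for every $n\geq 1$, where $A_n$ denotes the $p$-Sylow subgroup of the ideal class group of $k_n$ (so $|A_n|=p^{v_p(h_{k_n})}$) and $\lambda=\lambda(k_\infty/k)$; telescoping this and using $|A_0|=p^{v_p(h_k)}$ then yields the asserted formula. Write $\Gamma=\Gal(k_\infty/k)$, fix a topological generator $\gamma$, set $T=\gamma-1$ and $\Lambda=\Z_p\bbr{\Gamma}\simeq\Z_p\bbr{T}$, put $\Gamma_n=\Gamma^{p^n}$ and $\omega_n=(1+T)^{p^n}-1$, and let $X=X(k_\infty)=\varprojlim_n A_n$. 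By Proposition~\ref{mu and lambda}, the hypothesis $\mu(k_\infty/k)=0$ says that $X$ is finitely generated over $\Z_p$ of rank $\lambda$; since the $\Z_p$-torsion of $X$ is a finite $\Lambda$-submodule, the hypothesis that $X$ has no non-trivial finite $\Lambda$-submodule forces $X$ to be $\Z_p$-free of rank $\lambda$, with characteristic polynomial a distinguished polynomial $g(T)$ of degree $\lambda$.

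The key step, and the one I expect to be the main obstacle, is the descent identifying the finite group $A_n$ with the coinvariants $X_{\Gamma_n}=X/\omega_n X$. Because $k_\infty/k$ is totally ramified at every ramified prime, the $p$-Hilbert class field $H_n$ of $k_n$ is linearly disjoint from $k_\infty$ over $k_n$ and $H_nk_\infty$ lies in the maximal unramified abelian pro-$p$-extension $L(k_\infty)$, so restriction yields a surjection $X\twoheadrightarrow A_n$ which, since $\Gamma_n$ acts trivially on the abelian quotient $\Gal(H_nk_\infty/k_\infty)\simeq A_n$, factors through $X/\omega_n X$. I would then show that total ramification together with the absence of non-trivial finite $\Lambda$-submodules forces the induced map $X/\omega_n X\to A_n$ to be an isomorphism for \emph{every} $n\geq 0$, not merely for $n\gg 0$: total ramification controls the images of the inertia subgroups of the ramified primes inside $X$, and the no-finite-submodule hypothesis is exactly what prevents these inertia contributions from introducing a finite discrepancy between the two sides. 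Granting this, $|A_n|=|X/\omega_n X|$ for all $n\geq 0$.

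With $X$ free of rank $\lambda$ and $T=\gamma-1$ acting through a matrix $A$ whose characteristic polynomial is $g$, the operator $\omega_n$ acts as $(1+A)^{p^n}-1$, so
\[
v_p(h_{k_n})=v_p\bigl(\det((1+A)^{p^n}-1)\bigr)=\sum_{j=1}^{\lambda}v_p\bigl((1+t_j)^{p^n}-1\bigr),
\]
where $t_1,\dots,t_\lambda$ are the roots of $g$, each of positive valuation since $g$ is distinguished. It therefore suffices to prove $v_p((1+t_j)^{p^n}-1)=n+v_p(t_j)$ for every $j$ and every $n\geq 0$; summing over $j$ produces the increment $\lambda$ at each level and the constant $\sum_jv_p(t_j)=v_p(g(0))=v_p(h_k)$. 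Expanding $(1+t_j)^p-1$, its terms of lowest valuation are $p\,t_j$ and $t_j^{\,p}$, of valuations $1+v_p(t_j)$ and $p\,v_p(t_j)$, so the one-step increment is exactly $1$ as soon as $v_p(t_j)>1/(p-1)$; since the root valuations only grow under $t\mapsto(1+t)^p-1$, the desired equality then propagates to all $n$ by induction.

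It remains to see that the two hypotheses on $\lambda$ guarantee $v_p(t_j)>1/(p-1)$ for all $j$. A Newton-polygon estimate for the distinguished polynomial $g$ shows that its smallest root valuation equals the absolute value of the shallowest slope, namely the segment from $(\lambda,0)$ to a lattice vertex $(i_0,v_p(a_{i_0}))$ with $v_p(a_{i_0})\geq 1$, which is at least $1/\lambda$. If $\lambda<p-1$ this already gives $v_p(t_j)\geq 1/\lambda>1/(p-1)$. If $\lambda=p-1$, an integrality check on the lattice vertices shows that the bound $1/(p-1)$ is attained only when the Newton polygon is the single segment from $(p-1,0)$ to $(0,1)$, i.e. exactly when $p\parallel g(0)$; the hypothesis $p^2\mid g(0)$ rules this out and again yields $v_p(t_j)>1/(p-1)$. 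The delicate point is thus concentrated at the boundary $\lambda=p-1$, where the valuations of $p\,t_j$ and $t_j^{\,p}$ coincide, and it is precisely this competition that the condition on the constant term is designed to break; the genuinely substantial input, however, is the exact descent isomorphism $X/\omega_n X\simeq A_n$ of the second paragraph.
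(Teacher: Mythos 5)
The paper itself does not prove this statement---it is quoted verbatim from \cite[Theorem~3.1]{Sands1991}---so your attempt can only be measured against Sands's original argument. Measured that way, your opening reduction (with $\mu=0$ and no finite $\Lambda$-submodules, $X$ is $\Z_p$-free of rank $\lambda$ with distinguished characteristic polynomial $g$), your valuation identity $v_p((1+t)^{p^n}-1)=n+v_p(t)$ for $v_p(t)>1/(p-1)$, and your Newton-polygon analysis of the two hypotheses on $\lambda$ are all correct and are exactly the inputs Sands uses. The fatal problem is the step you yourself flagged as the main obstacle: the claimed isomorphism $X/\omega_nX\simeq A_n$ for all $n\geq0$ is \emph{false} whenever more than one prime ramifies, which is precisely the situation in this paper's applications ($p$ splits completely and every $p$-adic prime ramifies). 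The correct descent (Iwasawa; see \cite[Lemma~13.15]{Washington1997}) is $A_n\simeq X/\nu_nY_0$, where $\nu_n=\omega_n/\omega_0$ and $Y_0\subseteq X$ is the closed submodule generated by $\omega_0X$ together with elements $a_2,\dots,a_s$ comparing the inertia generators of the $s$ ramified primes; only for $s=1$ does $Y_0=\omega_0X$, and the no-finite-submodule hypothesis does nothing to collapse $Y_0$ down to $\omega_0X$. Concretely, your claim at $n=0$ would give $p^{v_p(h_k)}=|X/\omega_0X|=p^{v_p(g(0))}$ (with $X/\omega_0X$ even infinite if $g(0)=0$), and since $g$ is distinguished of degree $\lambda$ this forces $p\mid h_k$ whenever $\lambda\geq1$. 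That is refuted by any imaginary quadratic $k$ with $p$ split, $p\nmid h_k$ and $\lambda(k^\cy_\infty/k)=1$ (e.g.\ $k=\Q(i)$, $p=5$): there $X(k^\cy_\infty)\simeq\Z_p$ (as invoked in Proposition~\ref{FujiiThmA refine} of this paper), so every hypothesis of the theorem holds and both $p$-adic primes are totally ramified, yet $A_0=0$ while $X/\omega_0X\simeq\Z_p/g(0)\Z_p\neq0$. The same defect is visible internally in your writeup: your argument proves the constant to be $v_p(g(0))$ and never uses $h_k$, whereas the theorem asserts the constant $v_p(h_k)$; these differ in general.

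The repair is Sands's actual route, and your later paragraphs survive it almost verbatim. Since $X$ is $\Z_p$-free of rank $\lambda$ and $Y_0$ has finite index (namely $|X/Y_0|=|A_0|=p^{v_p(h_k)}$, the $n=0$ case of the descent), $Y_0$ is also $\Z_p$-free of rank $\lambda$ with the \emph{same} characteristic polynomial $g$. Hence $|A_n|=|A_0|\cdot|Y_0/\nu_nY_0|$ for all $n\geq0$, and $v_p(|Y_0/\nu_nY_0|)=\sum_{j}v_p(\nu_n(t_j))$ with $\nu_n(t)=((1+t)^{p^n}-1)/t$ (the determinant is nonzero, so $\nu_n$ is injective on $Y_0$). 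Your computation then yields $v_p(\nu_n(t_j))=n$ once $v_p(t_j)>1/(p-1)$: note that the division by $t_j$ built into $\nu_n$ is exactly what cancels the spurious $\sum_jv_p(t_j)=v_p(g(0))$ from your version and replaces it by $v_p(h_k)$ coming from $|X/Y_0|$. Your Newton-polygon argument that $\lambda<p-1$, or $\lambda=p-1$ together with $p^2\mid g(0)$, excludes a root of valuation $\leq 1/(p-1)$ is correct and is precisely where those hypotheses enter; the true role of the no-finite-submodule hypothesis is not to fix the descent, but to make $X$ (hence $Y_0$) free, so that the determinant computation is valid at \emph{every} level $n\geq0$ rather than only for $n\gg0$.
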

Now we are ready to prove Theorem~\ref{infinite Zp2}. 
\begin{proof}[Proof of Theorem~\ref{infinite Zp2}]
We have $\rank_pX(k_\infty)\leq 2$ by Theorem~\ref{minimum rank 2}. On the other hand, we have $\rank_{\Z_p}X(k_\infty)\geq 2$ by Lemma~\ref{ur}. These lead to the first assertion that $X(k_\infty)\simeq\Z_p^2$ as a $\Z_p\bbr{\Gal(k_\infty/k)}$-module. 

Now we consider the $\Z_p$-extension $k_\infty/k_e$, instead of $k_\infty/k$. Then $X(k_\infty)\simeq\Z_p^2$ as a $\Z_p\bbr{\Gal(k_\infty/k_e)}$-module and, in particular, $X(k_\infty)$ has no non-trivial finite $\Z_p\bbr{\Gal(k_\infty/k_e)}$-submodules. On the other hand, by Lemma~\ref{ur}, we have a surjection $X(k_\infty)\to\Gal(\widetilde{k}/k_\infty)$ of $\Z_p\bbr{\Gal(k_\infty/k_e)}$-modules. Hence the constant term of the characteristic power series is $0$ and, in particular, is divisible by $p^2$. Therefore, we can apply Theorem~\ref{SandsThm3.1} to obtain the assertion. 
\end{proof}
\begin{rem}
The vanishing of $\mu$-invariants can alternatively be seen by using Bloom--Gerth Theorem \cite[Theorem~1]{BG1981}. See Proposition~\ref{vanishing of mu} for details. 
\end{rem}
Here we mention that Theorem~\ref{SandsThm3.1} may also be used to obtain the following proposition, which refines \cite[Theorem~A~(2)]{Fujii2013}. 
\begin{prop}\label{FujiiThmA refine}
Let $k$ be an imaginary quadratic field and $p$ a prime number that splits in $k$. Suppose that $\lambda(k^\cy_\infty/k)=1$. Let $k_\infty/k$ be a $\Z_p$-extension that is ramified at each $p$-adic prime and $e\geq 0$ the minimum integer such that $k_\infty/k_e$ is totally ramified at each $p$-adic prime. Then $v_p(h_{k_n})=n+v_p(h_{k_e})-e$ holds for all $n\geq e$ and, in particular, $\mu(k_\infty/k)=0$, $\lambda(k_\infty/k)=1$ and $\nu(k_\infty/k)=v_p(h_{k_e})-e$. 
\end{prop}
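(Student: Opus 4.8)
The plan is to reduce the statement to the single structural fact that $X(k_\infty)$ is a free $\Z_p$-module of rank one, and then to read off the class number formula from Sands' Theorem~\ref{SandsThm3.1}. Since $k$ is imaginary quadratic and $p$ splits, $\widetilde{k}/k$ is a $\Z_p^2$-extension containing $k^\cy_\infty$, and every $\Z_p$-extension of $k$ lies in $\widetilde{k}$. The first and substantive task is therefore to prove $X(k_\infty)\simeq\Z_p$ (with trivial $\Gal(k_\infty/k)$-action); once this is known, the refinement is essentially the proof of Theorem~\ref{infinite Zp2} transplanted to the case $\lambda=1$.

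For the structural statement I would argue $X(\widetilde{k})=0$. The lower bound is immediate: by Lemma~\ref{ur} the extension $\widetilde{k}/k_\infty$ is unramified, so there is a surjection $X(k_\infty)\twoheadrightarrow\Gal(\widetilde{k}/k_\infty)\simeq\Z_p$, whence $\rank_{\Z_p}X(k_\infty)\geq 1$. For the upper bound, the five-term sequence of Lemma~\ref{fiveterm} together with Lemma~\ref{ur}, exactly as in Remark~\ref{deduceFujii}, gives an exact sequence
\[
0\to X(\widetilde{k})_{\Gal(\widetilde{k}/k^\cy_\infty)}\to X(k^\cy_\infty)\to\Gal(\widetilde{k}/k^\cy_\infty)\to 0.
\]
By Ferrero--Washington $\mu(k^\cy_\infty/k)=0$, so $X(k^\cy_\infty)$ is finitely generated over $\Z_p$ of rank $\lambda(k^\cy_\infty/k)=1$; as $\Gal(\widetilde{k}/k^\cy_\infty)\simeq\Z_p$ also has rank one, the submodule $X(\widetilde{k})_{\Gal(\widetilde{k}/k^\cy_\infty)}$ has $\Z_p$-rank zero and is thus finite. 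Since $k^+=\Q$ and $X(\Q^\cy_\infty)=0$, for $p$ odd we have $X(k^\cy_\infty)=X(k^\cy_\infty)^-$, and it is classical that this minus part has no non-trivial finite $\Z_p\bbr{\Gal(k^\cy_\infty/k)}$-submodule. Hence $X(\widetilde{k})_{\Gal(\widetilde{k}/k^\cy_\infty)}=0$, and Nakayama's Lemma forces $X(\widetilde{k})=0$, i.e. $\widetilde{k}$ admits no non-trivial unramified abelian pro-$p$-extension. As $L(k_\infty)/\widetilde{k}$ is such an extension (it is a subextension of the unramified abelian $L(k_\infty)/k_\infty$ and contains $\widetilde{k}$ by Lemma~\ref{ur}), we conclude $L(k_\infty)=\widetilde{k}$, so $X(k_\infty)=\Gal(\widetilde{k}/k_\infty)\simeq\Z_p$.

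The remaining calculation is routine. Passing to the $\Z_p$-extension $k_\infty/k_e$, the module $X(k_\infty)\simeq\Z_p$ is $\Z_p$-free as a $\Z_p\bbr{\Gal(k_\infty/k_e)}$-module and so has no non-trivial finite submodule, while the surjection $X(k_\infty)\twoheadrightarrow\Gal(\widetilde{k}/k_\infty)\simeq\Z_p$ shows that the constant term of its characteristic power series is $0$, in particular divisible by $p^2$. Because $\lambda(k_\infty/k)=1$ we are in the Sands range: either $\lambda<p-1$ (for $p$ odd) or $\lambda=p-1$ with the constant term divisible by $p^2$ (for $p=2$). Applying Theorem~\ref{SandsThm3.1} to $k_\infty/k_e$ yields $v_p(h_{k_{e+m}})=m+v_p(h_{k_e})$ for all $m\geq 0$; setting $n=e+m$ gives $v_p(h_{k_n})=n+v_p(h_{k_e})-e$ for $n\geq e$, and comparison with Iwasawa's formula produces $\mu(k_\infty/k)=0$, $\lambda(k_\infty/k)=1$ and $\nu(k_\infty/k)=v_p(h_{k_e})-e$.

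The step I expect to be the main obstacle is the upgrade from the bare invariant equalities $\mu(k_\infty/k)=0$, $\lambda(k_\infty/k)=1$ to the torsion-free statement $X(k_\infty)\simeq\Z_p$: the equalities alone give only $X(k_\infty)\simeq\Z_p\oplus(\text{finite})$, which is insufficient because Sands' theorem requires the absence of a non-trivial finite submodule. Note also that the machinery of Theorem~\ref{First Theorem} and Lemma~\ref{prankineq} only bounds $\rank_p X(k_\infty)$ by $2$, so it cannot by itself deliver rank one; it is precisely the vanishing $X(\widetilde{k})=0$, resting on the no-finite-submodule property of $X(k^\cy_\infty)^-$, that closes the gap. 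For $p=2$ the $\pm$-decomposition is unavailable and this input must instead be extracted from Fujii's Theorem~A(2) before Sands' theorem is invoked.
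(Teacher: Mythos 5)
Your proposal follows the same skeleton as the paper's proof: establish that $X(k^\cy_\infty)$ is a \emph{free} $\Z_p$-module of rank one, feed this into the exact sequence
\[
0\to X(\widetilde{k})_{\Gal(\widetilde{k}/k^\cy_\infty)}\to X(k^\cy_\infty)\to\Gal(\widetilde{k}/k^\cy_\infty)\to 0
\]
of Remark~\ref{deduceFujii} and conclude $X(\widetilde{k})=0$ by Nakayama, deduce $L(k_\infty)=\widetilde{k}$ and hence $X(k_\infty)\simeq\Z_p$ via Lemma~\ref{ur}, and finish by applying Theorem~\ref{SandsThm3.1} to $k_\infty/k_e$ exactly as in the proof of Theorem~\ref{infinite Zp2} (including the correct verification that the constant term of the characteristic power series vanishes, which is what covers the boundary case $\lambda=1=p-1$ when $p=2$). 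For odd $p$, your argument that $X(k^\cy_\infty)^+=0$ (from $X(\Q^\cy_\infty)=0$) and that $X(k^\cy_\infty)^-$ has no non-trivial finite $\Z_p\bbr{\Gal(k^\cy_\infty/k)}$-submodule is precisely an unpacking of the paper's one-line claim that $X(k^\cy_\infty)\simeq\Z_p$ is ``well-known''; you also correctly diagnose that this torsion-freeness, not merely $\mu=0$ and $\lambda=1$, is the crux, since Sands' theorem needs the no-finite-submodule hypothesis.

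The one genuine defect is your treatment of $p=2$. You propose to extract the required structural input from Fujii's Theorem~A\,(2), but that theorem is exactly the statement this proposition refines, and it asserts only the invariant equalities $\mu(k_\infty/k)=0$ and $\lambda(k_\infty/k)=1$ --- which, by your own closing observation, yield only $X\simeq\Z_p\oplus(\text{finite})$ and are therefore insufficient to run Sands' theorem; the appeal is both circular in spirit and too weak in content. The paper closes this case instead by citing Ferrero's structure theorem \cite[Theorem~7]{Ferrero1980}, which gives $X(k^\cy_\infty)\simeq\Z_2$ directly when $\lambda(k^\cy_\infty/k)=1$, after which the odd and even cases proceed uniformly through the five-term sequence, Nakayama, and Sands. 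With your reference for $p=2$ replaced by Ferrero's theorem, your proof is complete and agrees with the paper's.
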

\begin{proof}
We first claim that $X(k^\cy_\infty)\simeq\Z_p$. If $p$ is odd, it is well-known. For the case $p=2$,  see \cite[Theorem~7]{Ferrero1980}. Then, by Lemma~\ref{fiveterm} and Nakayama's Lemma, we obtain $X(\widetilde{k})=0$. By Lemmas \ref{fiveterm} and \ref{ur}, we obtain $X(k_\infty)\simeq\Z_p$ as a $\Z_p\bbr{\Gal(k_\infty/k)}$-module. The remaining part of the proof is similar to that of Theorem~\ref{infinite Zp2}. 
\end{proof}
\begin{rem}
In \cite{Sands1991}, Sands applied Theorem~\ref{SandsThm3.1} only to the cyclotomic $\Z_p$-extension over an imaginary quadratic field. In general, the assumption that $X(k_\infty)$ has no non-trivial finite $\Z_p\bbr{\Gal(k_\infty/k)}$-submodules is hard to be confirmed other than the case where $k_\infty/k$ is the cyclotomic $\Z_p$-extension. 
\end{rem}
\subsection{Some discussions on the vanishing of $\mu$}\label{Subsection Some discussions on the vanishing of mu}
In Theorem~\ref{infinite Zp2}, we showed the vanishing of $\mu$-invariants of certain $\Z_p$-extensions. However, we can show the vanishing of $\mu$-invariants by using Bloom--Gerth Theorem \cite[Theorem~1]{BG1981}. 
\begin{prop}\label{vanishing of mu}
Let $k$ be a quartic CM-field and $p$ an odd prime number. Suppose one of the following two conditions. 
\begin{enumerate}
\item $p$ splits completely in $k$ and $X(k^\cy_\infty)\simeq\Z_p^2$ as a $\Z_p\bbr{Gal(k^\cy_\infty/k)}$-module. 

\item $p$ does not split in $k$ and $X(k^\cy_\infty)$ is cyclic as a $\Z_p$-module. 
\end{enumerate}
Then $\mu(k_\infty/k)=0$ for each $\Z_p$-extension $k_\infty/k$ such that $k_\infty\nsubseteq\widetilde{k}^\an$
\end{prop}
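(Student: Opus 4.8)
The plan is to deduce $\mu(k_\infty/k)=0$ from the single fact that $\mu(k^\cy_\infty/k)=0$, transporting this information across the $\Z_p^2$-extension $K\coloneqq k^\cy_\infty k_\infty$ by means of the Bloom--Gerth Theorem~\cite[Theorem~1]{BG1981}, which asserts that in a $\Z_p^2$-extension containing at least one $\Z_p$-extension with vanishing $\mu$-invariant, all but at most one of its intermediate $\Z_p$-extensions have vanishing $\mu$-invariant. The CM-structure will then be exploited through complex conjugation to rule out $k_\infty$ being the exceptional one.

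First I would dispose of the case $k_\infty=k^\cy_\infty$, where the conclusion is immediate: under either hypothesis $X(k^\cy_\infty)$ is finitely generated over $\Z_p$ (it is $\simeq\Z_p^2$ in case (1) and cyclic over $\Z_p$ in case (2)), so $\mu(k^\cy_\infty/k)=0$ by Proposition~\ref{mu and lambda}~(1). For the remaining $k_\infty$ I set $K\coloneqq k^\cy_\infty k_\infty$; since $k_\infty\neq k^\cy_\infty$, the group $\Gal(K/k)$ embeds into $\Gal(k^\cy_\infty/k)\times\Gal(k_\infty/k)\simeq\Z_p^2$ with both projections surjective, forcing $\Gal(K/k)\simeq\Z_p^2$, so $K/k$ is a $\Z_p^2$-extension. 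Because Leopoldt's Conjecture holds for any quartic CM-field and $K$ is an intermediate field of $\widetilde{k}/k^\cy_\infty$, Lemma~\ref{Galois over real} shows $K/k^+$ is Galois, so the complex conjugation $J$ acts on $G\coloneqq\Gal(K/k)\simeq\Z_p^2$ and, as $p$ is odd, $G=G^+\oplus G^-$. The quotient map $\Gamma=\Gal(\widetilde{k}/k)\twoheadrightarrow G$ is $J$-equivariant, so $G^+$ is a quotient of $\Gamma^+$; since $k^+$ is real quadratic, $\Gamma^+$ is free of $\Z_p$-rank one, whence $\rank_{\Z_p}G^+\leq 1$, while $G^+\twoheadrightarrow\Gal(k^\cy_\infty/k)\simeq\Z_p$ gives $G^+\neq 0$. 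Thus $G^+$ and $G^-$ are each free of rank one, and the only two $J$-stable $\Z_p$-extensions inside $K$ are $k^\cy_\infty=K^{G^-}$ and the anti-cyclotomic one $k^\an_\infty=K^{G^+}$ (cf.\ Lemma~\ref{anti-cyclotomic intersection}).

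The heart of the argument is then a symmetry trick. Writing $k'_\infty\coloneqq J(k_\infty)$ for the complex conjugate $\Z_p$-extension, the field isomorphism induced by $J$ identifies the layers of $k_\infty$ with those of $k'_\infty$, so $\mu(k'_\infty/k)=\mu(k_\infty/k)$, exactly as in Remark~\ref{isomorphic}. Now the hypothesis $k_\infty\not\subseteq\widetilde{k}^\an$ forces $k_\infty\neq k^\an_\infty$, and we already have $k_\infty\neq k^\cy_\infty$, so $k_\infty$ is neither of the two $J$-stable directions in $K$; hence $k'_\infty\neq k_\infty$, and both lie in $K$. If $\mu(k_\infty/k)>0$ held, then $k_\infty$ and $k'_\infty$ would be two \emph{distinct} $\Z_p$-extensions inside $K$ with positive $\mu$-invariant, while $k^\cy_\infty\subseteq K$ has vanishing $\mu$-invariant; this contradicts the Bloom--Gerth ``at most one exception'' statement. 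Therefore $\mu(k_\infty/k)=0$, completing the proof.

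The main obstacle I anticipate is invoking Bloom--Gerth in precisely the form ``vanishing $\mu$ for one $\Z_p$-extension of a $\Z_p^2$-extension implies vanishing $\mu$ for all but at most one'' and verifying that its hypotheses apply verbatim to $K/k$. A secondary point requiring care is the rank computation showing $G^+$ and $G^-$ are each one-dimensional, which is what guarantees that $k^\cy_\infty$ and $k^\an_\infty$ exhaust the $J$-stable directions, so that the conjugate $k'_\infty$ is genuinely distinct from $k_\infty$ whenever $k_\infty\not\subseteq\widetilde{k}^\an$; everything else is bookkeeping with the lemmas already established.
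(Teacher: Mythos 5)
Your proposal is correct and follows essentially the same route as the paper's proof: deduce $\mu(k^\cy_\infty/k)=0$ from the hypothesis, apply the Bloom--Gerth theorem to the $\Z_p^2$-extension $k^\cy_\infty k_\infty/k$, and use complex conjugation (via Lemma~\ref{Galois over real} and Leopoldt for the quartic CM-field) to show an exceptional $\Z_p$-extension with positive $\mu$ would have to be $J$-stable, hence anti-cyclotomic. You merely make explicit what the paper compresses into one sentence --- the rank computation $G=G^+\oplus G^-$ with each eigenspace free of rank one, which identifies $k^\cy_\infty$ and $k^\an_\infty$ as the only $J$-stable directions in $K$ --- and run the argument as a contradiction with the conjugate pair $k_\infty\neq k'_\infty$ rather than concluding $k_\infty\subseteq\widetilde{k}^\an$ directly.
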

\begin{proof}
In both cases, we know $\mu(k^\cy_\infty/k)=0$ by the assumption. Let $k_\infty/k$ be a $\Z_p$-extension such that $\mu(k_\infty/k)>0$. By Bloom--Gerth theorem \cite[Theorem~1]{BG1981}, there is at most one $\Z_p$-extension over $k$ contained in $k^\cy_\infty k_\infty$ with positive $\mu$-invariant. Considering the action of the complex conjugation, the extension $k_\infty/k^+$ must be a Galois extension and we conclude that $k_\infty\subseteq\widetilde{k}^\an$. 
\end{proof}
We can immediately obtain a result on the vanishing of $m_0$-invariants of $\Z_p^2$-extensions over a quartic CM-field. Recall that the $m_0$-invariant $m_0(K_\infty/k)$ of a $\Z_p^2$-extension $K_\infty/k$ of a finite extension $k/\Q$ is defined as the largest integer such that $p^{m_0(K_\infty/k)}$ divides a generator of the characteristic power series of $X(K_\infty)$ as a $\Z_p\bbr{\Gal(K_\infty/k)}$-module (see \cite[\S4]{Cuoco1980} for example). 
\begin{cor}
Suppose the conditions as in Proposition~\ref{vanishing of mu}. Then $m_0(K_\infty/k)=0$ for each $\Z_p^2$-extension $K_\infty/k$ such that $K_\infty\neq\widetilde{k}^\an$. 
\end{cor}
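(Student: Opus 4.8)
The plan is to deduce the corollary from Proposition~\ref{vanishing of mu} together with the fact that, for a $\Z_p^2$-extension, the $m_0$-invariant is bounded above by the $\mu$-invariant of each $\Z_p$-extension it contains. The first point to establish is that $\widetilde{k}^\an/k$ is itself a $\Z_p^2$-extension, so that the excluded case $K_\infty=\widetilde{k}^\an$ is meaningful.

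First I would compute the relevant ranks. Since $k$ is a quartic CM-field we have $r_2=2$, and Leopoldt's Conjecture holds for $k$ and $p$, so $\Gal(\widetilde{k}/k)\simeq\Z_p^{3}$ by \cite[Theorem~13.4]{Washington1997}. Because $(\widetilde{k})^{\Gamma^-}=k^\cy_\infty$ is a $\Z_p$-extension, the quotient $\Gamma/\Gamma^-\simeq\Gamma^+$ is isomorphic to $\Z_p$, and hence $\Gal(\widetilde{k}^\an/k)=\Gamma/\Gamma^+\simeq\Gamma^-\simeq\Z_p^{2}$. Thus $\widetilde{k}^\an/k$ is a $\Z_p^2$-extension.

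Next, given a $\Z_p^2$-extension $K_\infty/k$ with $K_\infty\neq\widetilde{k}^\an$, I would produce a $\Z_p$-extension inside it that escapes $\widetilde{k}^\an$. If every $\Z_p$-extension of $k$ contained in $K_\infty$ were contained in $\widetilde{k}^\an$, then $K_\infty$, being the compositum of the $\Z_p$-extensions it contains, would satisfy $K_\infty\subseteq\widetilde{k}^\an$; since a continuous surjection $\Z_p^2\to\Z_p^2$ of Galois groups must be an isomorphism, this would force $K_\infty=\widetilde{k}^\an$, contrary to assumption. Hence there is a $\Z_p$-extension $k_\infty/k$ with $k_\infty\subseteq K_\infty$ and $k_\infty\nsubseteq\widetilde{k}^\an$, and Proposition~\ref{vanishing of mu} then gives $\mu(k_\infty/k)=0$.

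Finally I would invoke the comparison between $m_0$ and $\mu$. The key input is the inequality
\[
m_0(K_\infty/k)\leq\mu(k_\infty/k),
\]
valid for every $\Z_p$-extension $k_\infty/k$ inside $K_\infty$, which comes from the theory of $\Z_p^2$-Iwasawa modules in \cite[\S4]{Cuoco1980}: writing the characteristic power series of $X(K_\infty)$ as $p^{m_0(K_\infty/k)}g_0$ with $p\nmid g_0$ and specializing along the surjection $\Gal(K_\infty/k)\to\Gal(k_\infty/k)$, the image remains divisible by $p^{m_0(K_\infty/k)}$, while a control-theorem comparison between $X(K_\infty)$ and $X(k_\infty)$ (whose discrepancy is supported at the primes ramifying in $K_\infty/k_\infty$ and is therefore prime to $p$) shows that this power of $p$ is a lower bound for $\mu(k_\infty/k)$. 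Combining with $\mu(k_\infty/k)=0$ and $m_0(K_\infty/k)\geq 0$ yields $m_0(K_\infty/k)=0$, as desired. I expect the only delicate step to be this last inequality: making the specialization-and-control comparison precise enough that the $p$-part of the two-variable characteristic power series genuinely bounds $\mu(k_\infty/k)$ from below. The two reduction steps are essentially formal.
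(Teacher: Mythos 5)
Your two reduction steps are correct and essentially formal, as you say: $\widetilde{k}^\an/k$ is indeed a $\Z_p^2$-extension (for a quartic CM-field $r_2=2$, Leopoldt holds, and $\Gamma^-\simeq\Z_p^2$), and if every $\Z_p$-extension inside $K_\infty$ lay in $\widetilde{k}^\an$ then $K_\infty=\widetilde{k}^\an$, since two distinct $\Z_p$-extensions inside $K_\infty$ correspond to rank-one saturated subgroups of $\Z_p^2$ with trivial intersection and hence composite to all of $K_\infty$. The gap is exactly at the step you flagged: the inequality $m_0(K_\infty/k)\leq\mu(k_\infty/k)$ is asserted for \emph{every} $\Z_p$-extension $k_\infty\subseteq K_\infty$, and at that level of generality it is not what \cite[\S4]{Cuoco1980} provides, nor does your specialization-and-control sketch establish it. The characteristic ideal of $X(K_\infty)$ does not simply specialize to the characteristic ideal of the coinvariants $X(K_\infty)_{\Gal(K_\infty/k_\infty)}$: the pseudo-isomorphism $X(K_\infty)\sim E$ has pseudo-null kernel and cokernel, and a pseudo-null $\Z_p\bbr{S,T}$-module such as $\Z_p\bbr{S,T}/(p,S)$ acquires a \emph{positive} $\mu$-invariant after taking coinvariants along the particular height-one prime $(S)$; so along finitely many exceptional directions the $\mu$-invariant of the specialization is not controlled by $m_0$, in either direction. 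Your parenthetical that the control-theorem discrepancy "is prime to $p$" is also unjustified as written: the discrepancy is built from inertia subgroups, i.e.\ quotients of modules induced from decomposition groups, and what one can actually show is that these have trivial $\mu$-invariant — a genuine argument, not an observation about ramified primes. What Cuoco proves, and what the paper cites, is \cite[Corollary~4.10]{Cuoco1980}: if $m_0(K_\infty/k)>0$, then infinitely many $\Z_p$-extensions of $k$ contained in $K_\infty$ have positive $\mu$-invariant.

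The repair is short and converts your argument into the paper's proof, which runs by contraposition: if $m_0(K_\infty/k)>0$, then by \cite[Corollary~4.10]{Cuoco1980} infinitely many $\Z_p$-extensions inside $K_\infty$ have positive $\mu$-invariant; Proposition~\ref{vanishing of mu} forces each of them into $\widetilde{k}^\an$; and already two distinct such extensions composite to $K_\infty$, giving $K_\infty\subseteq\widetilde{k}^\an$ and hence $K_\infty=\widetilde{k}^\an$, contrary to hypothesis. Equivalently, your route could be salvaged with the refined statement that $\mu(k_\infty/k)\geq m_0(K_\infty/k)$ holds for all but finitely many $k_\infty\subseteq K_\infty$, choosing $k_\infty$ outside both that finite bad set and $\widetilde{k}^\an$ (possible, since at most one sub-$\Z_p$-extension of $K_\infty$ lies in $\widetilde{k}^\an$); but the universal inequality you invoked is not available and is not needed.
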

\begin{proof}
Let $K_\infty/k$ be a $\Z_p^2$-extension such that $m_0(K_\infty/k)>0$. Then, by \cite[Corollary~4.10]{Cuoco1980}, there are infinitely many $\Z_p$-extensions over $k$ contained in $K_\infty$ with positive $\mu$-invariants. 
By Proposition~\ref{vanishing of mu}, all of these $\Z_p$-extensions are contained in $\widetilde{k}^\an$. Thus we obtain $K_\infty=\widetilde{k}^\an$. 
\end{proof}
\subsection{Examples}\label{Subsection Examples}
\subsubsection{Strategy for constructing examples}
In this section, we show some examples that satisfy the assumptions of the theorems in \S\ref{Subsection Main theorems}. 

We start from the following observation. Let $k$ be a quartic CM-field and $p$ a prime number that splits completely in $k$. Moreover, suppose that $X(\widetilde{k})=0$. Then, for each $\Z_p$-extension $k_\infty/k$ that is ramified at each $p$-adic prime, we have $X(k_\infty)\simeq\Z_p^2$ by Lemma~\ref{ur}. So, under the condition that $X(\widetilde{k})=0$, we can deduce the assertions of Theorem~\ref{infinite Zp2} under weaker assumptions and in a quite simpler way. For this reason, we are mainly interested in examples such that $X(\widetilde{k})\neq 0$. We use the following criterion of Okano's, which is a part of \cite[Theorem~1.1.1~(II)]{Okano2025}. 
\begin{lem}[{\cite[Theorem~1.1.1~(II)]{Okano2025}}]\label{Okano criterion}
Let $k$ be a quartic CM-field and $p$ an odd prime number that splits  completely in $k$. Suppose that $\rank_p X(k)\geq 2$. Then $X(\widetilde{k})\neq 0$. 
\end{lem}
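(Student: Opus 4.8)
The plan is to translate the statement into a question about the \emph{torsion submodule} of a single Iwasawa-theoretic Galois group and then to analyze that torsion eigenspace-by-eigenspace under complex conjugation. Write $M/k$ for the maximal abelian pro-$p$-extension of $k$ that is unramified outside $p$ (i.e.\ $M=M_{S_p(k)}(k)$ in the notation of \S\ref{Section Preliminaries}) and put $G\coloneqq\Gal(M/k)$. Let $\Gamma\coloneqq\Gal(\widetilde{k}/k)\simeq\Z_p^3$, let $T\subseteq G$ be the $\Z_p$-torsion submodule, so that $G/T\simeq\Gamma$, $\widetilde{k}=M^{T}$ and $\Gal(M/\widetilde{k})=T$. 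The first step is to prove that $X(\widetilde{k})\neq 0$ if and only if $T\neq 0$. The key point is that $M/\widetilde{k}$ is everywhere unramified: since $p$ splits completely, each completion at $\pe\mid p$ is $\Q_p$, so local class field theory shows the inertia subgroup of $G$ at $\pe$ is procyclic, $\simeq\Z_p$, hence torsion-free; as $\pe$ is already totally ramified in $k^\cy_\infty/k$, this inertia meets $T=\Gal(M/\widetilde{k})$ trivially and so contributes no ramification over $\widetilde{k}$. Thus $M$ is an abelian-over-$k$, unramified-over-$\widetilde{k}$ extension, and it is the maximal such; carrying out the argument of Lemmas~\ref{fiveterm} and~\ref{central} for the $\Z_p^3$-extension $\widetilde{k}/k$ identifies $X(\widetilde{k})_\Gamma\cong\Gal(M/\widetilde{k})=T$. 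By the topological Nakayama Lemma applied to the compact $\Z_p\bbr{\Gamma}$-module $X(\widetilde{k})$, we have $X(\widetilde{k})=0\iff X(\widetilde{k})_\Gamma\otimes\F_p=0\iff T=0$. Hence it suffices to prove $T\neq 0$.

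Next I would decompose the situation under the complex conjugation $J$, using the idempotents $\ve_\pm$ of \S\ref{Subsection Anti-cyclotomic extensions} (legitimate since $p$ is odd). Global class field theory supplies the exact sequence
\[
0\longrightarrow \mathcal I\longrightarrow G\longrightarrow X(k)\longrightarrow 0,\qquad \mathcal I\coloneqq\Big(\textstyle\prod_{\pe\mid p}U^1_\pe\Big)\big/\overline{E},
\]
where $U^1_\pe\simeq\Z_p$ is the group of principal local units at $\pe$ and $\overline{E}$ is the closure of the global units; Leopoldt's Conjecture (valid here) gives $\overline{E}\simeq\Z_p$. Because $p$ splits completely we have $\mu_p\not\subseteq k$, so the units of $k$ are real up to index prime to $p$, and therefore $\overline{E}\subseteq(\prod_\pe U^1_\pe)^+$ lies in the plus-eigenspace. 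Splitting the sequence with $\ve_\pm$ then yields $\mathcal I^-=(\prod_\pe U^1_\pe)^-\simeq\Z_p^2$ torsion-free, while $\mathcal I^+=(\prod_\pe U^1_\pe)^+/\overline{E}$ carries the unit. Since $\rank_{\Z_p}\Gamma^+=1$ and $\rank_{\Z_p}\Gamma^-=2$, I would compute $\rank_p T^\pm=\rank_p G^\pm-\rank_{\Z_p}G^\pm$ directly from the $\pm$-parts of this sequence, expressing it through $\rank_p X(k)^\pm$ and the images of the local inertia.

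The plus part is then immediate. As each $\pe$ is totally ramified in $k^\cy_\infty/k$, the inertia surjects onto the cyclotomic line $\Gamma^+/p\Gamma^+\simeq\F_p$, so its image in $G^+/pG^+$ is at least one-dimensional; since $\dim_{\F_p}G^+/pG^+=1+\rank_p T^+$, this forces $\rank_p X(k)^+\leq\rank_p T^+$. Consequently, if $X(k)^+\neq 0$ then $T^+\neq 0$ and we are done. It therefore remains to treat the case $X(k)^+=0$, in which the hypothesis gives $\rank_p X(k)^-\geq 2$. Here the same rank bookkeeping reduces the desired conclusion $T^-\neq 0$ to the single statement that $\mathcal I^-\not\subseteq pG^-$, i.e.\ that the minus-part local inertia is nonzero modulo $p$ — equivalently, that the maximal multiple anti-cyclotomic $\Z_p$-extension $\widetilde{k}^\an/k$ is already ramified above $p$ in its first layer.

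I expect this last point to be the main obstacle. When $\rank_p X(k)^-\geq 3$ it holds automatically for rank reasons, but the genuine difficulty is the boundary case $\rank_p X(k)^-=2$, where one must exclude the configuration in which the two conjugate pairs of primes above $p$ give anti-cyclotomic inertia entirely inside $pG^-$ (so that $\widetilde{k}^\an/k$ is unramified in its first layer while $G^-\simeq\Z_p^2$ surjects onto an $X(k)^-$ of $p$-rank $2$). No formal rank count rules this out — note it is not even excluded by reflection, which only bounds $\rank_p X(k)^+$ against $\rank_p X(k)^-$ — so this is precisely the arithmetic heart of the theorem. Resolving it requires a finer study of how the primes above $p$ distribute in the anti-cyclotomic direction, through an explicit reciprocity/inertia computation, which is the substantive content of Okano's argument rather than anything that follows from the genus-theoretic reduction above.
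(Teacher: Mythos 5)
Your proposal is not a completed proof, and you say so yourself: the case $X(k)^+=0$, $\rank_p X(k)^-=2$ --- which is exactly where the hypothesis $\rank_p X(k)\geq 2$ has to do real work --- is left open, deferred to ``an explicit reciprocity/inertia computation'' that you do not carry out. For calibration: the paper itself offers no internal proof of this lemma at all; it is imported verbatim from Okano \cite[Theorem~1.1.1~(II)]{Okano2025}, so the benchmark is Okano's argument, and what you have reproduced is only the soft genus-theoretic front end. Concretely, nothing in your rank bookkeeping excludes the configuration $G^-\simeq\Z_p^2$ with $\mathcal{I}^-\subseteq pG^-$ (anti-cyclotomic inertia trivial modulo $p$ while $X(k)^-$ still has $p$-rank $2$), and your write-up stops precisely at that point. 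The plus-part step ($X(k)^+\neq 0$ forces torsion in $\Gal(M/k)$, via the inertia line surjecting onto the cyclotomic direction modulo $p$) is fine, but it only disposes of the easy half of the hypothesis.

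There is also a genuine error in your first reduction, and it matters because it may make the strategy unsalvageable rather than merely unfinished. You claim $X(\widetilde{k})_\Gamma\cong\Gal(M/\widetilde{k})=T$ by ``carrying out the argument of Lemmas~\ref{fiveterm} and~\ref{central}''; but Lemma~\ref{central} depends on $\bigwedge^2\Gal(K/k)=0$, which holds only for $\Z_p$-extensions. For $\Gamma\simeq\Z_p^3$ the five-term sequence of Lemma~\ref{fiveterm} reads $\bigwedge^2\Gamma\to X(\widetilde{k})_\Gamma\to T\to 0$ with $\bigwedge^2\Gamma\simeq\Z_p^3$, so one obtains only a surjection $X(\widetilde{k})_\Gamma\twoheadrightarrow T$. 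Thus $T\neq 0$ is \emph{sufficient} for $X(\widetilde{k})\neq 0$ (the only direction your plan uses), but your asserted equivalence ``$X(\widetilde{k})=0\iff T=0$'' is unjustified: $X(\widetilde{k})$ can be nonzero with $T=0$, the nontriviality being carried by the image of the wedge classes rather than by torsion in $\Gal(M/k)$. Consequently, reducing the lemma to ``$T\neq 0$'' may replace it by a strictly stronger statement, and there is no a priori guarantee that the missing boundary case can be closed along your lines at all --- Okano's criterion need not pass through torsion in $\Gal(M/k)$. So the verdict is: an admitted, decisive gap in the final case, compounded by a framing error in the reduction that would itself need repair before any completion of the remaining case could be credited.
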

Using this criterion together with Fukuda's theorem \cite[Theorem~1]{Fukuda1994}, we obtain the following lemma. Here we recall that $F_1$ denotes the first layer of a given $\Z_p$-extension $F_\infty/F$, that is, the intermediate field of $F_\infty/F$ such that $[F_1:F]=p$. 
\begin{lem}\label{condition}
Let $k$ be a quartic CM-field and $p$ an odd prime number that splits completely in $k$. Suppose that the following hold. 
\begin{itemize}
\item $X((k^+)^\cy_1)=0$. 

\item $\rank_p X(k)=\rank_p X(k^\cy_1)=2$. 
\end{itemize}
Then $X(k^\cy_\infty)\simeq \Z_p^2$ as a $\Z_p\bbr{\Gal(k^\cy_\infty/k)}$-module and $X(\widetilde{k})\neq 0$. 
\end{lem}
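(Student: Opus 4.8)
The statement splits into two assertions. The second, $X(\widetilde{k})\neq 0$, is immediate: the hypothesis $\rank_p X(k)=2\geq 2$ is precisely the input to Okano's criterion (Lemma~\ref{Okano criterion}), so nothing further is required there. The real content is $X(k^\cy_\infty)\simeq\Z_p^2$. Since $p$ splits completely in $k$, the extension $k^\cy_\infty/k$ is ramified at every $p$-adic prime, so Remark~\ref{trivial action} applies and it suffices to prove $X(k^\cy_\infty)\cong\Z_p^2$ merely as a $\Z_p$-module; I would obtain this by pinning the $\Z_p$-rank down to $2$ from both sides and then using torsion-freeness.

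First I would record the ramification. As $p$ splits completely in $k$ it is unramified in $k/\Q$ and totally ramified in $\Q^\cy_\infty/\Q$, so each of the four primes of $k$ above $p$ is totally ramified in $k^\cy_\infty/k$, which is unramified elsewhere. Writing $A_n\coloneqq X(k^\cy_n)$, the hypothesis reads $\rank_p A_0=\rank_p A_1=2$, and since all ramified primes are totally ramified already at the bottom layer, Fukuda's theorem \cite[Theorem~1]{Fukuda1994} gives $\rank_p A_n=2$ for every $n\geq 0$.

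For the upper bound I would use the standard identification $X(k^\cy_\infty)\cong\varprojlim_n A_n$ with respect to the norm maps. Because a $p$-adic prime is totally ramified at each step, the norm maps $A_{n+1}\to A_n$ are surjective; combined with $\rank_p A_{n+1}=\rank_p A_n=2$ this forces the induced maps $A_{n+1}/pA_{n+1}\to A_n/pA_n$ to be isomorphisms of $2$-dimensional $\F_p$-spaces. Starting from a surjection $\Z_p^2\twoheadrightarrow A_0$ (possible as $\rank_p A_0=2$) and lifting successively through the norm maps (possible since $\Z_p^2$ is free and the norms are surjective), the isomorphism on $/p$-quotients keeps each lift surjective by Nakayama, producing a compatible system $\Z_p^2\twoheadrightarrow A_n$ and, in the limit, a surjection $\Z_p^2\twoheadrightarrow X(k^\cy_\infty)$; hence $\rank_{\Z_p}X(k^\cy_\infty)\leq 2$. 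For the lower bound, Lemma~\ref{ur} gives $\widetilde{k}\subseteq L(k^\cy_\infty)$, so $X(k^\cy_\infty)\twoheadrightarrow\Gal(\widetilde{k}/k^\cy_\infty)\cong\Z_p^2$ (using $\Gal(\widetilde{k}/k)\cong\Z_p^3$ for the quartic CM field $k$, Leopoldt holding), whence $\rank_{\Z_p}X(k^\cy_\infty)\geq 2$. The rank is therefore exactly $2$, and a rank-$2$ quotient of the torsion-free module $\Z_p^2$ must be $\Z_p^2$ itself.

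The step I expect to be the main obstacle is the upper bound, i.e. realizing $X(k^\cy_\infty)$ as a quotient of $\Z_p^2$: what makes the inductive lifting go through the whole tower is not merely surjectivity of the norm maps but the isomorphism $A_{n+1}/p\cong A_n/p$ at every layer, and this is exactly the point where Fukuda's rank-stability is essential. I would also note how the first hypothesis feeds the complementary plus-part picture: norm-surjectivity at layer~$1$ over $k^+$ turns $X((k^+)^\cy_1)=0$ into $X(k^+)=0$, so both bottom $p$-ranks vanish and Fukuda's theorem gives $X((k^+)^\cy_\infty)=0$, whence $X(k^\cy_\infty)^+=0$ and $X(k^\cy_\infty)=X(k^\cy_\infty)^-$ by the usual comparison of plus parts for odd $p$; this confirms the minus-part nature of the module and is the input one checks in practice, even though the rank count above does not logically require it.
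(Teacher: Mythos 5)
Your proof is correct, and while its outer skeleton coincides with the paper's --- both arguments obtain $\rank_p X(k^\cy_n)=2$ for all $n\geq 0$ from Fukuda's theorem \cite[Theorem~1~(2)]{Fukuda1994} (legitimately, since complete splitting of $p$ makes $k^\cy_\infty/k$ totally ramified at every $p$-adic prime), both get $X(\widetilde{k})\neq 0$ directly from Lemma~\ref{Okano criterion}, and both upgrade the $\Z_p$-module isomorphism to a $\Z_p\bbr{\Gal(k^\cy_\infty/k)}$-module isomorphism via Remark~\ref{trivial action} --- your middle step is genuinely different. The paper passes through Iwasawa invariants: bounded $p$-ranks give $\mu(k^\cy_\infty/k)=0$, whence $X(k^\cy_\infty)\simeq\Z_p^{\lambda(k^\cy_\infty/k)}$ and $\lambda=2$. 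Note that $\mu=0$ by itself only yields finite generation over $\Z_p$, so this step implicitly uses the absence of finite torsion in $X(k^\cy_\infty)$; that is where the first hypothesis enters the paper's route, since $X((k^+)^\cy_1)=0$ yields $X((k^+)^\cy_\infty)=0$ by \cite[Theorem~1~(1)]{Fukuda1994}, hence $X(k^\cy_\infty)^+=0$, and one can then invoke the classical fact that the minus part of $X$ along the cyclotomic $\Z_p$-extension of a CM-field has no nontrivial finite submodule for odd $p$. Your inverse-limit argument replaces all of this by a direct rank count: the Nakayama-lifting through the norm tower is valid precisely because Fukuda's rank stability makes each $A_{n+1}/pA_{n+1}\to A_n/pA_n$ an isomorphism, producing a surjection $\Z_p^2\twoheadrightarrow X(k^\cy_\infty)$; the surjection $X(k^\cy_\infty)\twoheadrightarrow\Gal(\widetilde{k}/k^\cy_\infty)\simeq\Z_p^2$ from Lemma~\ref{ur} (with Leopoldt, which holds unconditionally for quartic CM-fields) pins the rank from below; and torsion-freeness then comes for free, since a $\Z_p$-rank-$2$ quotient of $\Z_p^2$ is $\Z_p^2$ itself (the composite surjective endomorphism of $\Z_p^2$ is an isomorphism). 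What your route buys is self-containedness --- no appeal to Iwasawa's class number formula or to the no-finite-submodule theorem --- together with the correct and worthwhile observation that the hypothesis $X((k^+)^\cy_1)=0$ is not logically needed for the conclusion: in your argument it only confirms the minus-part picture (and indeed $X(k^\cy_\infty)^+=0$ already follows from the conclusion by Remark~\ref{trivial action}), whereas in the paper's argument it underwrites the torsion-freeness step. What the paper's route buys is brevity, given the standard Iwasawa-theoretic background.
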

\begin{proof}
We first remark that $k^\cy_\infty/k$ and $(k^+)^\cy_\infty/k^+$ are totally ramified at each $p$-adic prime because $p$ splits completely in $k$. 

We have $X((k^+)^\cy_\infty)=0$ by \cite[Theorem~1~(1)]{Fukuda1994}. On the other hand, we have $\rank_pX(k^\cy_n)=2$ for all $n\geq 0$ by \cite[Theorem~1~(2)]{Fukuda1994}. This in particular implies $\mu(k^\cy_\infty/k)=0$, so we have $X(k^\cy_\infty)\simeq\Z_p^{\lambda(k^\cy_\infty/k)}$ as a $\Z_p$-module. Since $\rank_pX(k^\cy_n)=2$ holds for all $n\geq 0$, we conclude that $\lambda(k^\cy_\infty/k)=2$. Thus $X(k^\cy_\infty)\simeq\Z_p^2$ as a $\Z_p\bbr{\Gal(k^\cy_\infty/k)}$-module (see Remark~\ref{trivial action}). The assertion $X(\widetilde{k})\neq 0$ follows directly from Lemma~\ref{Okano criterion}. 
\end{proof}
The following class group computation was performed using PARI/GP \cite{PARI2}, under the assumption of the GRH (Generalized Riemann Hypothesis). 
\subsubsection{Biquadratic fields}
Let $p=3$. For $k^+=\Q(\sqrt{m})$ with $m$ listed in the following table, the prime $3$ splits in $k^+$ and we can check $X((k^+)^\cy_1)=0$. Put $k\coloneqq\Q(\sqrt{m}, \sqrt{-d})$ for a square-free positive integer $d$. If $\Q(\sqrt{m}, \sqrt{-d})=\Q(\sqrt{m}, \sqrt{-d'})$ for another square-free positive integer $d'$, then we chose the smaller one as $d$. For each $m$, we check the conditions in Lemma~\ref{condition} for $k$ in the range $1\leq d\leq 10000$. Table~\ref{biquad} contains the number of $k$ that satisfy the conditions in this range and the first $10$ of them, arranged by the size of $d$. 
\begin{table}[H]
\centering
\begin{tabular}{|c||c|c|}
\hline
$m$ & number of $k$ & first $10$ of those $d$ \\ \hline \hline
$7$ & $75$  & $26, 431, 473, 563, 566, 755, 821, 1055, 1361, 1397$. \\ \hline
$10$ & $67$  & $89, 557, 707, 782, 839, 914, 959, 1118, 1142, 1322$. \\ \hline
$13$ & $82$  & $329, 491, 527, 794, 905, 989, 1142, 1166, 1397, 1439$. \\ \hline
$19$ & $86$ & $110, 170, 329, 491, 515, 593, 839, 983, 1055, 1142$. \\ \hline
$22$ & $75$ & $53, 329, 335, 431, 434, 731, 1106, 1313, 1502, 1517$. \\ \hline
$31$ & $91$ & $233, 542, 671, 677, 707, 794, 821, 839, 959, 1067$. \\ \hline
$34$ & $83$ & $23, 59, 89, 335, 431, 473, 491, 557, 707, 794$. \\ \hline
$37$ & $77$ & $29, 170, 182, 335, 497, 665, 1145, 1166, 1169, 1394$. \\ \hline
$46$ & $79$ & $83, 89, 170, 431, 497, 563, 593, 695, 755, 905$. \\ \hline
\end{tabular}
\caption{}
\label{biquad}
\end{table}
\subsubsection{Cyclic fields}
Let $f(s, t; x)\coloneqq x^4+2s(t^2+1)x^2+s^2t^2(t^2+1)\in\Q(s, t)[x]$. We use this polynomial to construct imaginary cyclic quartic fields. This polynomial is known as a generic polynomial of $C_4$ (the cyclic group of order $4$). See \cite[Corollary~2.2.6]{JLY2002} for details. For each pair $(s, t)$  in Table~\ref{cyclic}, the field $k$ defined by $f(s, t; x)$ satisfy the conditions in Lemma~\ref{condition}. 
\begin{table}[H]
\centering
\begin{tabular}{|c|c||c|}
\hline
$t$ & $k^+$ & $s$ \\ \hline \hline
$3$ & $\Q(\sqrt{10})$ & $43, 103, 166, 214, 367, 397, 403, 415, 535, 553$. \\ \hline
$3/2$ & $\Q(\sqrt{13})$ & $109, 115, 145, 331, 355, 373, 454, 493, 526, 589$. \\ \hline
$5/3$ & $\Q(\sqrt{34})$ & $65, 107, 110, 137, 227, 314, 317, 359, 419, 467$. \\ \hline
$6$ & $\Q(\sqrt{37})$ & $31, 43, 46, 58, 118, 157, 163, 262, 391, 502$. \\ \hline
$6/5$ & $\Q(\sqrt{61})$ & $223, 253, 307, 355, 367, 463, 493, 589, 655, 730$. \\ \hline
\end{tabular}
\caption{}
\label{cyclic}
\end{table}
\subsubsection{Non-Galois fields}
For each non-negative integers $m$ and $d$, we put $k\coloneqq\Q(\sqrt{\sqrt{m}-d})$. For each $m$ listed in Table~\ref{non-Galois}, we check the conditions in Lemma~\ref{condition} for $k$ in the range $1\leq d\leq 20000$, and the table contains all of them that satisfy conditions in~Lemma~\ref{condition}. 
\begin{table}[H]
\centering
\begin{tabular}{|c||c|c|}
\hline
$m$ & $d$ \\ \hline \hline
$7$& $8347, 17338$. \\ \hline
$10$ & $4744, 7381, 8542, 8866, 14995$. \\ \hline
$13$ & $250, 11806, 11914, 13543$. \\ \hline
$19$ & $1027, 1864, 1945, 9001, 11908, 18874$. \\ \hline
$22$ & $7882, 7963, 19411$. \\ \hline
$31$ & $2824, 5740, 11194, 15433$. \\ \hline
$34$ & $760, 3244, 6889, 11290, 13666, 16339$. \\ \hline
$37$ & $685, 5221, 8488, 9460, 13834$. \\ \hline
$46$ & $5887, 8749, 9586, 9883, 17470$. \\ \hline
\end{tabular}
\caption{}
\label{non-Galois}
\end{table}
\section{Iwasawa modules over a CM-field}\label{Section Iwasawa modules over a CM-field}
The following is a generalization of Theorem~\ref{FujiiThm1} and partially generalizes Theorem~\ref{minimum rank 2}. 
\begin{thm}\label{minimum rank d1}
Let $k$ be a CM-field of degree $2d$ with a positive integer $d$. Let $p$ be an odd prime number that splits completely in $k$ and $\Sigma$ a set of $p$-adic primes of $k$ that satisfies $\Sigma\cap\overline{\Sigma}=\emptyset$. Assume that Leopoldt's Conjecture holds for $k$ and $p$. Suppose that $X_\Sigma(\widetilde{k})$ is cyclic as a $\Z_p\bbr{\Gal(\widetilde{k}/k^\cy_\infty)}$-module. Then we have
\[
\min\{\rank_pX_\Sigma(k_\infty), \rank_pX_{\overline{\Sigma}}(k_\infty)\}\leq d+1
\]
for each $\Z_p$-extension $k_\infty/k$ that satisfies $k_\infty\cap\widetilde{k}^\an=k$. 
\end{thm}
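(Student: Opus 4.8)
The plan is to imitate the proof of Theorem~\ref{minimum rank 2}, with $\widetilde{k}$ playing the role taken there by $k^\cy_\infty k^\an_\infty$ and with a $d$-variable substitute for Lemma~\ref{powerseries dimension}. First I would apply Lemma~\ref{prankineq} to the abelian pro-$p$-extension $\widetilde{k}/k_\infty$ (valid since $k_\infty\subseteq\widetilde{k}$ and $\widetilde{k}/k$ is Galois), obtaining
\[
\rank_p X_\Sigma(k_\infty)\leq\rank_p X_\Sigma(\widetilde{k})_{\Gal(\widetilde{k}/k_\infty)}+\rank_p\Gal(\widetilde{k}/k_\infty).
\]
Because Leopoldt's Conjecture is assumed, $\Gal(\widetilde{k}/k)\simeq\Z_p^{d+1}$ and $\Gal(k^\cy_\infty/k)\simeq\Z_p$, so $\Gal(\widetilde{k}/k_\infty)\simeq\Z_p^{d}$ and the last term equals $d$. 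Hence it suffices to show $\rank_p X_\Sigma(\widetilde{k})_{\Gal(\widetilde{k}/k_\infty)}\leq1$ for at least one of $k_\infty$ and its complex conjugate $k'_\infty$: in the latter case $X_\Sigma(k'_\infty)\simeq X_{\overline{\Sigma}}(k_\infty)$ as $\Z_p$-modules by Remark~\ref{isomorphic}, so either way the minimum in the statement is bounded by $d+1$.

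Next I would set up coordinates as in the proof of Theorem~\ref{First Theorem}. Write $\Gamma\coloneqq\Gal(\widetilde{k}/k)=\Gamma^+\oplus\Gamma^-$, where $\Gamma^+=\Gal(\widetilde{k}/\widetilde{k}^\an)$ has rank $1$ and $\Gamma^-=\Gal(\widetilde{k}/k^\cy_\infty)$ has rank $d$; fix a topological generator $\sigma$ of $\Gamma^+$ and topological generators $\tau_1,\ldots,\tau_d$ of $\Gamma^-$, and identify $\Z_p\bbr{\Gamma}\simeq\Z_p\bbr{S,T_1,\ldots,T_d}$ via $\sigma\leftrightarrow1+S$, $\tau_i\leftrightarrow1+T_i$. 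The cyclicity hypothesis gives a generator $x$ of $X_\Sigma(\widetilde{k})$ over $\Z_p\bbr{\Gamma^-}=\Z_p\bbr{T_1,\ldots,T_d}$, and writing $\sigma x=(1+f(T_1,\ldots,T_d))x$ yields $S-f\in\Ann_{\Z_p\bbr{\Gamma}}X_\Sigma(\widetilde{k})$. The condition $k_\infty\cap\widetilde{k}^\an=k$ means $\Gal(\widetilde{k}/k_\infty)+\Gamma^+=\Gamma$, which for rank reasons forces $\Gamma=\Gal(\widetilde{k}/k_\infty)\oplus\Gamma^+$; thus $\Gal(\widetilde{k}/k_\infty)$ is the graph of a $\Z_p$-linear map $\Gamma^-\to\Gamma^+$, so concretely $\Gal(\widetilde{k}/k_\infty)=\tgen{\sigma^{\alpha_1}\tau_1,\ldots,\sigma^{\alpha_d}\tau_d}$ for suitable $\alpha_1,\ldots,\alpha_d\in\Z_p$, while $k'_\infty$ corresponds to $\tgen{\sigma^{-\alpha_1}\tau_1,\ldots,\sigma^{-\alpha_d}\tau_d}$ since $J$ fixes $\Gamma^+$ and inverts $\Gamma^-$. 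Taking $\Gal(\widetilde{k}/k_\infty)$-coinvariants in the surjection $\Z_p\bbr{S,T_1,\ldots,T_d}/(S-f)\twoheadrightarrow X_\Sigma(\widetilde{k})$ then bounds $\rank_p X_\Sigma(\widetilde{k})_{\Gal(\widetilde{k}/k_\infty)}$ by $\dim_{\F_p}\Z_p\bbr{S,T_1,\ldots,T_d}/(S-f,(1+S)^{\alpha_1}(1+T_1)-1,\ldots,(1+S)^{\alpha_d}(1+T_d)-1,p)$.

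The heart of the matter, and the step I expect to be the main obstacle, is the $d$-variable replacement of Lemma~\ref{powerseries dimension}: that this dimension is at most $1$ for $(\alpha_1,\ldots,\alpha_d)$ or for $(-\alpha_1,\ldots,-\alpha_d)$. As in the one-variable case one first checks $f(0)\equiv0\pmod p$ (otherwise $S-f$ is a unit and the quotient is $0$); this holds because the continuous action of the pro-$p$ group $\Gamma^+$ on the at most one-dimensional $\F_p$-space $X_\Sigma(\widetilde{k})/(p,T_1,\ldots,T_d)X_\Sigma(\widetilde{k})$ is trivial, its automorphisms forming a group of order prime to $p$. Eliminating $S=f$ turns the quotient into $\F_p\bbr{T_1,\ldots,T_d}/(\bar{g}_1,\ldots,\bar{g}_d)$ with $\bar{g}_i=(1+\bar{f})^{\alpha_i}(1+T_i)-1$, whose constant terms vanish. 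Writing the linear part of $\bar{f}$ as $\sum_j\bar{c}_jT_j$ and $\bar\alpha_j$ for the reduction of $\alpha_j$, the Jacobian of $(\bar{g}_1,\ldots,\bar{g}_d)$ at the origin is the rank-one update $I+\underline{\bar\alpha}\,\underline{\bar c}^{\top}$, whose determinant is $1+\sum_j\bar{c}_j\bar\alpha_j$. When this is nonzero the linear parts span the cotangent space, so the ideal equals the maximal ideal by Nakayama and the dimension is $1$. Finally, writing $s\coloneqq\sum_j\bar{c}_j\bar\alpha_j\in\F_p$, the determinants attached to $(\alpha_1,\ldots,\alpha_d)$ and $(-\alpha_1,\ldots,-\alpha_d)$ are $1+s$ and $1-s$, which cannot both vanish because $p$ is odd---precisely the parity trick in the proof of Lemma~\ref{powerseries dimension}. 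The technical care will lie in making the determinant--Nakayama step and the substitution $S=f$ rigorous in several variables, but I expect no genuinely new phenomenon beyond the one-variable situation.
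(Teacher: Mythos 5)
Your proof is correct, but it takes a genuinely different route from the paper's at the decisive step. The paper proves no multivariable analogue of Lemma~\ref{powerseries dimension}: instead it invokes Lemma~\ref{anti-cyclotomic intersection} to produce an anti-cyclotomic $\Z_p$-extension $k^\an_\infty$ with $k_\infty\subseteq K\coloneqq k^\cy_\infty k^\an_\infty$ and $k_\infty\cap k^\an_\infty=k$, notes that $X_\Sigma(\widetilde{k})_{\Gal(\widetilde{k}/K)}$ is still cyclic over $\Z_p\bbr{\Gal(K/k^\cy_\infty)}$, and applies the already-established two-variable Theorem~\ref{First Theorem} to this coinvariant module; since coinvariants compose, this yields exactly your alternative ($\rank_p X_\Sigma(\widetilde{k})_{\Gal(\widetilde{k}/k_\infty)}\leq 1$ or the same for $k'_\infty$), after which the endgame via Lemma~\ref{prankineq} with $\rank_p\Gal(\widetilde{k}/k_\infty)=d$ and Remark~\ref{isomorphic} is identical to yours. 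You instead keep all $d+1$ variables and prove the $d$-variable bound directly, and the pieces check out: the rank count does force $\Gamma=\Gal(\widetilde{k}/k_\infty)\oplus\Gamma^+$ (subgroups of $\Z_p^{d+1}$ are torsion-free), so the graph description $\tgen{\sigma^{\alpha_1}\tau_1,\ldots,\sigma^{\alpha_d}\tau_d}$ and the $J$-computation giving $\tgen{\sigma^{-\alpha_i}\tau_i}$ for $k'_\infty$ are right; the elimination of $S$ is legitimate once $\bar{f}(0)=0$, and your fallback correctly covers the remaining cases (if $\bar{f}(0)\neq 0$ then $S-f$ is a unit mod $p$ and the quotient vanishes, which also handles $X_\Sigma(\widetilde{k})=0$, where the Galois-action argument for $f(0)\equiv 0$ is vacuous); the matrix determinant lemma gives $\det(I+\bar{\alpha}\,\bar{c}^{\top})=1+s$, the tangent-space-plus-Nakayama step in the complete local ring $\F_p\bbr{T_1,\ldots,T_d}$ is standard, and $1+s$, $1-s$ cannot both vanish since $p$ is odd. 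As for what each approach buys: the paper's reduction to $K$ recycles Theorem~\ref{First Theorem} verbatim and confines all power-series analysis to one variable, at the price of needing Lemma~\ref{anti-cyclotomic intersection}; your version is self-contained at the cost of redoing the power-series lemma, and in fact your Jacobian formulation is more uniform than the Weierstrass-preparation case analysis in Lemma~\ref{powerseries dimension} (no split on $\mu(f)$, $\lambda(f)$, $v_p(\alpha)$), which suffices because only the bound $\leq 1$ is needed rather than the exact value of $\lambda(h_\alpha)$. One cosmetic remark: substituting $S=f$ into $(1+S)^{\alpha}(1+T_i)-1$ produces, up to a unit factor, the paper's $h_{-\alpha}$ rather than $h_{\alpha}$; this sign-convention mismatch (present in the paper's own reduction as well) is immaterial since the conclusion is symmetric under $\alpha\leftrightarrow-\alpha$.
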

\begin{proof}
By Lemma~\ref{anti-cyclotomic intersection}, we can take an anti-cyclotomic $\Z_p$-extension $k^\an_\infty/k$ such that $k_\infty\subseteq k^\cy_\infty k^\an_\infty$,  and this satisfies $k_\infty\cap k_\infty^\an=k$. Put $K\coloneqq k^\cy_\infty k^\an_\infty$. By the assumption that $X_\Sigma(\widetilde{k})$ is cyclic as a $\Z_p\bbr{\Gal(\widetilde{k}/k^\cy_\infty)}$-module, $X_\Sigma(\widetilde{k})_{\Gal(\widetilde{k}/K)}$ is cyclic as a $\Z_p\bbr{\Gal(K/k^\cy_\infty)}$-module. Thus, by Theorem~\ref{First Theorem} applied to $X=X_\Sigma(\widetilde{k})_{\Gal(\widetilde{k}/K)}$, we have one of the following two inequalities.
\begin{itemize}
\item $\rank_pX_\Sigma(\widetilde{k})_{\Gal(\widetilde{k}/k_\infty)}\leq1$. 

\item $\rank_pX_\Sigma(\widetilde{k})_{\Gal(\widetilde{k}/k'_\infty)}\leq1$. 
\end{itemize}
If the first inequality $\rank_pX_\Sigma(\widetilde{k})_{\Gal(\widetilde{k}/k_\infty)}\leq1$ holds, we obtain $\rank_pX_\Sigma(k_\infty)\leq d+1$ by Lemma~\ref{prankineq}. On the other hand, if the second inequality $\rank_pX_\Sigma(\widetilde{k})_{\Gal(\widetilde{k}/k'_\infty)}\leq1$ holds, we obtain $\rank_pX_\Sigma(k'_\infty)\leq d+1$ again by Lemma~\ref{prankineq}, and thus $\rank_pX_{\overline{\Sigma}}(k_\infty)\leq d+1$ since $X_\Sigma(k'_\infty)$ and $X_{\overline{\Sigma}}(k_\infty)$ are isomorphic as $\Z_p$-modules. 
\end{proof}
\begin{cor}\label{minimum rank pe 2}
Let $k$ be an imaginary quadratic field, $p$ an odd prime number that splits in $k$ and $\pe$ a $p$-adic prime of $k$. Suppose that $\lambda(k^\cy_\infty/k)=2$. Then we have
\[
\min\{\rank_pX_\pe(k_\infty), \rank_pX_{\overline{\pe}}(k_\infty)\}\leq 2
\]
for each $\Z_p$-extension $k_\infty/k$ that satisfies $k_\infty\cap k^\an_\infty=k$. 
\end{cor}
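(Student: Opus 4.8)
The plan is to deduce the corollary from Theorem~\ref{minimum rank d1} by checking its hypotheses in the case $d=1$ with $\Sigma=\{\pe\}$. Since $k$ is imaginary quadratic it is a CM-field of degree $2=2\cdot 1$, Leopoldt's Conjecture holds for $k$ because $k$ is abelian, and ``$p$ splits in $k$'' means $p$ splits completely. As $p$ splits we have $\pe\neq\overline{\pe}$, so $\Sigma=\{\pe\}$ satisfies $\Sigma\cap\overline{\Sigma}=\emptyset$, and $\overline{\Sigma}=\{\overline{\pe}\}$, so that $X_\Sigma(k_\infty)=X_\pe(k_\infty)$ and $X_{\overline{\Sigma}}(k_\infty)=X_{\overline{\pe}}(k_\infty)$. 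Moreover, for imaginary quadratic $k$ we have $\widetilde{k}^\an=k^\an_\infty$, so the condition $k_\infty\cap k^\an_\infty=k$ is exactly the condition $k_\infty\cap\widetilde{k}^\an=k$ of Theorem~\ref{minimum rank d1}, and its conclusion $\min\{\cdots\}\leq d+1=2$ is the desired bound. Everything therefore reduces to verifying the one nontrivial hypothesis: that $X_\pe(\widetilde{k})$ is cyclic as a $\Z_p\bbr{\Gal(\widetilde{k}/k^\cy_\infty)}$-module.

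First I would pin down the structure of $X(k^\cy_\infty)$. By Proposition~\ref{mu and lambda}~(2) and the assumption $\lambda(k^\cy_\infty/k)=2$, we have $\rank_{\Z_p}X(k^\cy_\infty)=2$. Since $k$ is abelian, the Ferrero--Washington theorem gives $\mu(k^\cy_\infty/k)=0$, so $X(k^\cy_\infty)$ is finitely generated over $\Z_p$. Because $p$ splits, the $p$-adic primes $\pe,\overline{\pe}$ are totally ramified in $k^\cy_\infty/k$, and the presence of a totally ramified prime forces $X(k^\cy_\infty)$ to have no nonzero finite $\Z_p\bbr{\Gal(k^\cy_\infty/k)}$-submodule. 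Combined with $\mu=0$, this makes $X(k^\cy_\infty)$ torsion-free over $\Z_p$ (via the structure theorem for $\Lambda$-modules), hence $X(k^\cy_\infty)\simeq\Z_p^2$ as a $\Z_p$-module. I would also record the classical fact that $X(k^\cy_\infty)^+=0$: the plus part descends, using that $p$ is odd and unramified in $k/\Q$, to a pro-$p$-extension of $(k^+)^\cy_\infty=\Q^\cy_\infty$ that is unramified everywhere, and this is trivial since $X(\Q^\cy_\infty)=0$.

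With these inputs I would reproduce the exact-sequence argument from the proof of Theorem~\ref{minimum rank 2}, now with $K=\widetilde{k}$. Using that $\widetilde{k}/k^\cy_\infty$ is unramified (Lemma~\ref{ur}), the five-term sequence of Lemma~\ref{fiveterm} with $\bigwedge^2\Gal(\widetilde{k}/k^\cy_\infty)=0$, and the identification $M_\pe(k^\cy_\infty)=L(k^\cy_\infty)$ from Lemma~\ref{M_Sigma L} (applicable since $X(k^\cy_\infty)^+=0$ and $\{\pe\}\cap\{\overline{\pe}\}=\emptyset$), one obtains the exact sequence
\[
0\to X_\pe(\widetilde{k})_{\Gal(\widetilde{k}/k^\cy_\infty)}\to X(k^\cy_\infty)\to\Gal(\widetilde{k}/k^\cy_\infty)\to 0
\]
of $\Z_p$-modules. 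As $X(k^\cy_\infty)\simeq\Z_p^2$ and $\Gal(\widetilde{k}/k^\cy_\infty)\simeq\Z_p$, the surjection onto the free module $\Z_p$ splits, so its kernel $X_\pe(\widetilde{k})_{\Gal(\widetilde{k}/k^\cy_\infty)}$ is $\Z_p$-free of rank $1$, in particular cyclic over $\Z_p$. By Nakayama's Lemma, $X_\pe(\widetilde{k})$ is cyclic as a $\Z_p\bbr{\Gal(\widetilde{k}/k^\cy_\infty)}$-module, which is exactly the missing hypothesis. Theorem~\ref{minimum rank d1} then delivers the corollary.

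The hard part will be establishing $X(k^\cy_\infty)\simeq\Z_p^2$ cleanly, and in particular the two classical inputs feeding into it. The $\Z_p$-freeness is essential: without it, $\Z_p$-torsion in $X(k^\cy_\infty)$ could raise the $p$-rank of the kernel above $1$ and destroy the cyclicity needed for Nakayama's Lemma. The vanishing $X(k^\cy_\infty)^+=0$ is equally essential, since it is precisely what lets Lemma~\ref{M_Sigma L} identify the middle term of the exact sequence with the unramified module $X(k^\cy_\infty)$ rather than the genuinely $\pe$-ramified $X_\pe(k^\cy_\infty)$. Both facts are standard for imaginary quadratic fields with $p$ odd, so the work is mainly in assembling them correctly rather than in any delicate new estimate.
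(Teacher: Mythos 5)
Your proposal takes exactly the paper's route: the paper, too, proves this corollary by producing the exact sequence
$0\to X_\pe(\widetilde{k})_{\Gal(\widetilde{k}/k^\cy_\infty)}\to X(k^\cy_\infty)\to\Gal(\widetilde{k}/k^\cy_\infty)\to 0$
from Lemmas \ref{fiveterm} and \ref{M_Sigma L}, deducing $X_\pe(\widetilde{k})_{\Gal(\widetilde{k}/k^\cy_\infty)}\simeq\Z_p$ from $X(k^\cy_\infty)\simeq\Z_p^2$, obtaining cyclicity of $X_\pe(\widetilde{k})$ over $\Z_p\bbr{\Gal(\widetilde{k}/k^\cy_\infty)}$ by Nakayama's Lemma, and then applying Theorem~\ref{minimum rank d1} with $d=1$ and $\Sigma=\{\pe\}$. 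Your preliminary reductions ($\widetilde{k}^\an=k^\an_\infty$ for imaginary quadratic $k$, the disjointness $\Sigma\cap\overline{\Sigma}=\emptyset$, Leopoldt for abelian fields, and the verification $X(k^\cy_\infty)^+=0$ needed to apply Lemma~\ref{M_Sigma L}) are all correct, and indeed more explicit than the paper, which leaves most of them tacit.

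There is, however, one incorrect step, though it is local and repairable. You justify the absence of nonzero finite $\Z_p\bbr{\Gal(k^\cy_\infty/k)}$-submodules of $X(k^\cy_\infty)$ by \emph{the presence of a totally ramified prime}; that is not a valid principle. For instance, for a real quadratic field in which $p$ splits, both $p$-adic primes are totally ramified in the cyclotomic $\Z_p$-extension, yet $X$ can be finite and nonzero --- and then $X$ is itself a nonzero finite submodule. The correct argument, for which you already have every ingredient in hand, is: since $X(k^\cy_\infty)^+=0$ (which you prove anyway in order to use Lemma~\ref{M_Sigma L}), we have $X(k^\cy_\infty)=X(k^\cy_\infty)^-$, and for odd $p$ a theorem of Iwasawa asserts that the minus part contains no nontrivial finite $\Z_p\bbr{\Gal(k^\cy_\infty/k)}$-submodule (see, e.g., \cite[Proposition~13.28]{Washington1997}); combined with $\mu(k^\cy_\infty/k)=0$ (Ferrero--Washington) and $\lambda(k^\cy_\infty/k)=2$, this yields $X(k^\cy_\infty)\simeq\Z_p^2$. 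The paper simply treats this structure as well known (compare the parallel remark in the proof of Proposition~\ref{FujiiThmA refine}), so with this one substitution your argument is complete and coincides with the paper's proof.
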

\begin{proof}
We have an exact sequence
\[
\begin{tikzpicture}
\node (a) at (-0.5,0) {$0$}; 
\node (b) at (2,0) {$X_\pe(\widetilde{k})_{\Gal(\widetilde{k}/k^\cy_\infty)}$}; 
\node (c) at (5,0) {$X(k^\cy_\infty)$}; 
\node (d) at (8,0) {$\Gal(\widetilde{k}/k^\cy_\infty)$}; 
\node (e) at (10.5,0) {$0$};
\draw[->] (a) -- (b); \draw[->] (b) -- (c); \draw[->] (c) -- (d); \draw[->] (d) -- (e); 
\end{tikzpicture}
\]
by Lemmas \ref{fiveterm} and \ref{M_Sigma L}. Since $X(k^\cy_\infty)\simeq\Z_p^2$, we have $X_\pe(\widetilde{k})_{\Gal(\widetilde{k}/k^\cy_\infty)}\simeq\Z_p$, and thus $X_\pe(\widetilde{k})$ is cyclic as a $\Z_p\bbr{\Gal(\widetilde{k}/k^\cy_\infty)}$-module by Nakayama's Lemma. Hence we may apply Theorem~\ref{minimum rank d1} to obtain the assertion.  
\end{proof}
\begin{thm}\label{infinite Zpd}
Let $k$ be a CM-field of degree $2d$ with a positive integer $d$. Let $p$ be an odd prime number that splits completely in $k$. Assume that Leopoldt's Conjecture holds for $k$ and $p$. Suppose that $X(\widetilde{k})$ is cyclic as a $\Z_p\bbr{\Gal(\widetilde{k}/k^\cy_\infty)}$-module. Let $k_\infty/k$ be a $\Z_p$-extension that satisfies the following.  
\begin{itemize}
\item $k_\infty/k$ is ramified at each $p$-adic prime. 

\item $k_\infty\cap\widetilde{k}^\an=k$. 
\end{itemize}
Then $X(k_\infty)\simeq\Z_p^d\oplus\left(\Z_p/p^e\Z_p\right)$ as a $\Z_p$-module for some $e\in\Z_{\geq 0}\cup\{\infty\}$, where we use the convention $\Z_p/p^\infty\Z_p\coloneqq\Z_p$. In particular, $\mu(k_\infty/k)=0$ and $d\leq\lambda(k_\infty/k)\leq d+1$. 
\end{thm}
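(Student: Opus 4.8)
The plan is to mirror the argument of Theorem~\ref{infinite Zp2}, replacing the input bound from Theorem~\ref{minimum rank 2} by the one coming from Theorem~\ref{minimum rank d1}. First I would apply Theorem~\ref{minimum rank d1} with $\Sigma=\emptyset$, so that $X_\Sigma=X$ and $\overline{\Sigma}=\emptyset$ as well; the hypothesis that $X(\widetilde{k})$ is cyclic as a $\Z_p\bbr{\Gal(\widetilde{k}/k^\cy_\infty)}$-module is exactly the cyclicity assumption needed there, and the condition $k_\infty\cap\widetilde{k}^\an=k$ is assumed. This yields
\[
\rank_p X(k_\infty)\leq d+1,
\]
where I use that for $\Sigma=\emptyset$ the two quantities in the minimum coincide, so the minimum is just $\rank_p X(k_\infty)$ itself. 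On the other hand, since $k_\infty/k$ is ramified at each $p$-adic prime, Lemma~\ref{ur} gives a surjection $X(k_\infty)\twoheadrightarrow\Gal(\widetilde{k}/k_\infty)$, and $\Gal(\widetilde{k}/k_\infty)\simeq\Z_p^{d}$ because $\Gal(\widetilde{k}/k)\simeq\Z_p^{d+1}$ under Leopoldt's Conjecture (here $r_2=d$) and $k_\infty/k$ accounts for one $\Z_p$-quotient. Hence $\rank_{\Z_p}X(k_\infty)\geq d$.

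Next I would extract the module structure. The $p$-rank bound $\rank_p X(k_\infty)\leq d+1$ controls the minimal number of generators of $X(k_\infty)$ as a $\Z_p$-module; combined with $\rank_{\Z_p}X(k_\infty)\geq d$, this forces $X(k_\infty)$ to be generated by at most $d+1$ elements with free rank at least $d$. The key point is that a finitely generated $\Z_p$-module with these two constraints must be isomorphic to $\Z_p^{d}\oplus(\Z_p/p^{e}\Z_p)$ for some $e\in\Z_{\geq 0}\cup\{\infty\}$: by the structure theorem it is a direct sum of $\Z_p$ and cyclic torsion summands, the number of summands is the $p$-rank (so at most $d+1$), and the number of free summands is the $\Z_p$-rank (so at least $d$); thus there are either $d$ summands, all free, or $d+1$ summands, $d$ free and one torsion, and both cases are captured by the stated form with the convention $\Z_p/p^{\infty}\Z_p=\Z_p$ absorbing the borderline. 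I would also need to rule out that $X(k_\infty)$ is infinitely generated over $\Z_p$, i.e.\ that $\mu(k_\infty/k)=0$: this follows from Proposition~\ref{mu and lambda}(1) together with the finite $p$-rank bound, since $\rank_p X(k_\infty)<\infty$ already implies $X(k_\infty)$ is finitely generated over $\Z_p$ by Nakayama's Lemma.

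Finally, the Iwasawa invariant statements are immediate consequences of the structure result via Proposition~\ref{mu and lambda}: $\mu(k_\infty/k)=0$ because $X(k_\infty)$ is finitely generated over $\Z_p$, and $\lambda(k_\infty/k)=\rank_{\Z_p}X(k_\infty)$, which equals $d$ when $e<\infty$ and $d+1$ when $e=\infty$, giving the two-sided bound $d\leq\lambda(k_\infty/k)\leq d+1$.

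I expect the main obstacle to be a bookkeeping subtlety rather than a deep difficulty: one must be careful that the $p$-rank bound from Theorem~\ref{minimum rank d1} genuinely applies with $\Sigma=\emptyset$ (checking that $\Sigma\cap\overline{\Sigma}=\emptyset$ holds vacuously and that $X_\emptyset=X$), and that the passage from ``$p$-rank $\leq d+1$ and $\Z_p$-rank $\geq d$'' to the precise module shape $\Z_p^{d}\oplus(\Z_p/p^{e}\Z_p)$ correctly handles the case $e=\infty$ under the stated convention. The remaining steps are routine applications of the structure theorem for finitely generated $\Z_p$-modules, Nakayama's Lemma, and the dictionary between module invariants and Iwasawa invariants supplied by Proposition~\ref{mu and lambda}.
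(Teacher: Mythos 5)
Your proposal is correct and takes essentially the same approach as the paper: the paper's (very terse) proof is exactly the specialization of Theorem~\ref{minimum rank d1} to $\Sigma=\emptyset$ (where both entries of the minimum coincide with $\rank_p X(k_\infty)$) combined with the lower bound $\rank_{\Z_p}X(k_\infty)\geq d$ coming from Lemma~\ref{ur} and Leopoldt's Conjecture. Your expansion of ``these lead to the assertion''---topological Nakayama to get finite generation over $\Z_p$ from the finite $p$-rank, then the structure theorem forcing the shape $\Z_p^{d}\oplus(\Z_p/p^{e}\Z_p)$ with the $e=\infty$ convention, then Proposition~\ref{mu and lambda} for the invariants---is precisely the bookkeeping the paper leaves implicit.
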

\begin{proof}
We have $\rank_pX(k_\infty)\leq d+1$ by Theorem~\ref{minimum rank d1}. On the other hand, we have $\rank_{\Z_p}X(k_\infty)\geq d$ by Lemma~\ref{ur}. These lead to the assertion. 
\end{proof}
\begin{rem}
Under the assumptions in Theorem~\ref{minimum rank 2}, the assumptions in Theorem~\ref{minimum rank d1} hold. Indeed, by Lemmas \ref{ur} and \ref{fiveterm}, $X_\Sigma(\widetilde{k})_{\Gal(\widetilde{k}/k^\cy_\infty)}$ is cyclic as a $\Z_p$-module, and thus $X_\Sigma(\widetilde{k})$ is cyclic as a $\Z_p\bbr{\Gal(\widetilde{k}/k^\cy_\infty)}$-module by Nakayama's Lemma. However, Theorem~\ref{minimum rank d1} does not imply Theorem~\ref{minimum rank 2}. We also remark that we have no numerical applications of Theorem~\ref{infinite Zpd} yet, except for the case where $k$ is an imaginary quadratic field, in which case we obtained Corollary~\ref{minimum rank pe 2}. 
\end{rem}
\section*{Acknowledgments}
I would like to express my sincere gratitude to my supervisor Takenori Kataoka for the numerous valuable suggestions. I am also grateful to Yuta Katayama for providing helpful knowledge on constructing examples. 

\printbibliography
\end{document}